\providecommand*{\Dashv}{%
  \mathrel{%
    \mathpalette\@Dashv\vDash
  }%
}
\newcommand*{\@Dashv}[2]{%
  \reflectbox{$\m@th#1#2$}%
}
\newtheorem{theorem}{Theorem}
\newtheorem{proposition}{Proposition}
\newtheorem{lemma}{Lemma}
\newtheorem{corollary}{Corollary}
\theoremstyle{definition}
\newtheorem{definition}{Definition}
\newtheorem{remark}{Remark}
\newtheorem{example}{Example}
\newcommand{\algLfin}{\textbf{\L}\mathbf{V}}
\newcommand{\domLfin}{{\L}V}
\newcommand{\ntwrt}[1]{non-trivial with respect to \ensuremath{#1}}
\newcommand{\nvert}{\mathop{\!\not\vert}}
\begin{document}

\title{
{Maximality
in finite-valued {\L}ukasiewicz logics defined by order filters}
}
\author{Marcelo E. Coniglio$^1$, Francesc Esteva$^2$, Joan Gispert$^3$ and Lluis Godo$^2$ \\ \\
\small $^1$ Dept. of Philosophy - IFCH and  \\ \small  Centre for Logic, Epistemology and the History of Science, \\ \small University of Campinas, Brazil \\
\small {\tt coniglio@cle.unicamp.br} \\
\small $^2$ Artificial Intelligence Research Institute (IIIA) - CSIC, Barcelona, Spain \\
\small {\tt \{esteva,godo\}@iiia.csic.es} \\
\small $^3$ Dept. of Mathematics and Computer Science, University of Barcelona, Spain \\
\small {\tt jgispertb@ub.edu}
}
\date{}
\maketitle

\begin{abstract}
 In this paper we consider the logics $\mathsf{L}_n^i$ obtained from the $(n+1)$-valued  {\L}ukasiewicz logics {\L}$_{n+1}$ by taking the order filter generated by $i/n$ as the set of designated elements. In particular, the conditions of maximality and strong maximality among them are analyzed. We present a very general theorem which provides sufficient conditions for maximality between logics. As a consequence of this theorem it is shown that $\mathsf{L}_n^i$ is maximal w.r.t.\ {\sf CPL} whenever $n$ is prime. Concerning strong maximality between the logics $\mathsf{L}_n^i$ (that is, maximality w.r.t. rules instead of axioms), we provide algebraic arguments in order to show that the logics $\mathsf{L}_n^i$ are not strongly maximal w.r.t.\ {\sf CPL}, even for $n$ prime. Indeed, in such case, we show there is just one extension between $\mathsf{L}_n^i$ and {\sf CPL} obtained by adding to $\mathsf{L}_n^i$  a kind of graded explosion rule.  Finally, using these results, we show that the logics  $\mathsf{L}_n^i$ with $n$ prime and $i/n < 1/2$ are ideal paraconsistent logics.
\end{abstract}

\section{Introduction}

In this paper we study the notion of  maximality and strong maximality among finite-valued propositional logics. Recall the usual notion of maximality found in the literature:  a propositional logic $L_1$,  that is a sublogic of another logic $L_2$  (in the sense of inclusionship of their consequence relations over the same signature), is called maximal
with respect to $L_2$  if, roughly speaking, $L_1$ extended with any theorem of $L_2$ which is not a theorem of $L_1$, coincides with $L_2$. Similarly, recall the stronger notion of strong maximality following \cite{ArieliAZ10,AvronAZ10,RibCon12}:  $L_1$ is called strongly maximal  with respect to $L_2$  if, roughly speaking again, $L_1$ extended with  a rule of inference valid in $L_2$ but not a valid in $L_1$, coincides with $L_2$.

The problem of finding and characterizing maximal sublogics (in both senses) of a given logic has already been addressed in the literature, specially in the context of paraconsistent logics, where being maximal with respect to classical logic is felt as a desirable or ideal feature, c.f. \cite{ArieliAZ11a,CarCon16}. Indeed,  being maximal means that, while still allowing non-trivial inconsistent theories, it retains as much as possible of classical logic.

In the present paper we approach the general problem of characterizing maximality (not necessarily for paraconsistent logics) in two different scenarios. The first one considers a very general class of finite-valued logics, those defined by almost arbitrary finite logical matrices. In such a context, we provide a sufficient condition for a logic to be maximal w.r.t. another one with less truth-values under very general conditions. This result, inspired on the notion of recovery operators from paraconsistent logics, turns out to be very powerful and encompasses many maximality results scattered in the literature.

The second scenario considers a particular class of finite-valued logics, the class of $n$-valued {\L}ukasiewicz logics \L$_{n}$ and their related logics defined by order filters. We show that these logics, for $n$ being prime, are maximal but not strongly maximal with respect to classical logic. Actually, we show that each of these logics can always be uniquely extended with a sort of explosion inference rule such that the obtained logic is the unique one below classical logic, and hence strongly maximal.

The paper is structured as follows. After this introduction, we provide in Section \ref{recovery} a very general condition for a finite matrix logic to be maximal w.r.t. another one with less truth-values, and we analyze in particular the case of 3-valued logics. In the rest of the paper we focus our attention on the class of finite-valued {\L}ukasiewicz logics $\mathsf{L}_n^i$ defined by order filters. In Section \ref{Sect-max}  we identify which of these logics are maximal with respect to classical logic, while in Section \ref{sectLqi} we study their status regarding the property of strong maximality.  It is in Section \ref{Joan} where we fully characterize, by algebraic techniques, conditions of strong maximality.  {Finally, in Section~\ref{sectIdeal} the question of ideal paraconsistent logics (as introduced in~\cite{ArieliAZ11a}) will be analized in the present framework. Specifically, it will be shown that the logics  $\mathsf{L}_n^i$ with $n$ prime and $i/n < 1/2$ are ideal paraconsistent logics. In addition, the case $\mathsf{L}_3^1$ will be discussed with more detail, and it will be argued that this logic constitutes the 4-valued version of the well-known 3-valued paraconsistent logic   $\mathsf{J}_3$ (see~\cite{dot:dac:70}).}
We finish in Section \ref{concl} with some conclusions and prospects of future research.

\section{Maximality and recovery operators} \label{recovery}

Let us recall the usual notion of maximality of a  (standard) logic with respect to another:

\begin{definition}
Let $L_1$  and $L_2$ two standard propositional logics defined over the same signature $\Theta$ such that $L_1$  is a proper sublogic of  $L_2$, i.e.\  such that ${\vdash_{L_1}} \subsetneq {\vdash_{L_2}}$, where $\vdash_{L_i}$ denotes the consequence relation of  $L_i$ (for $i=1,2$). Then, $L_1$ is said to be {\em maximal} w.r.t.\ $L_2$ if, for every formula $\varphi$ over $\Theta$, if $\vdash_{L_2} \varphi$ but $\nvdash_{L_1} \varphi$, then the logic $L_1^+$  obtained from  $L_1$   by adding $\varphi$ as a theorem, coincides with $L_2$.
\end{definition}

By $L_1^+$  above we mean the logic whose consequence relation is obtained from the one of  $L_1$ as follows: for every set of formulas $\Gamma \cup \{\psi\}$ over $\Theta$,
 $$\Gamma \vdash_{L_1^+} \psi \ \ \mbox{ if }  \Gamma, \{\sigma(\varphi) \ : \ \sigma \ \mbox{is a substitution over $\Theta$}\} \vdash_{L_1} \psi.$$

\begin{remark} \label{vacuous}
It should be noticed that, according to the above definition, if $L_1$ is a proper sublogic of $L_2$ such that they validate the same formulas (that is: $\vdash_{L_1} \varphi$  \ iff \  $\vdash_{L_2} \varphi$, for every  formula $\varphi$) then $L_1$ is maximal w.r.t. $L_2$.
\end{remark}

In this section, for the class of  propositional logics induced by finite logical matrices, we will provide a very general sufficient condition for a logic to be maximal w.r.t.\ another one (see Theorem \ref{maxthm} below), its proof being inspired in the role played by the so-called recovery operators in paraconsistent and adaptive logics. Recall from~\cite{CM} (see also~\cite{car:con:mar:07,CarCon16})  the definition of the class of paraconsistent logics called {\em Logics of Formal Inconsistency} ({\bf LFI}s):  a given logic, say $L$, is an {\bf LFI} if it is paraconsistent w.r.t. some negation, say $\neg$ (that is, there exist formulas $\alpha$ and $\beta$ such that $\beta$ does not follows from $\{\alpha,\neg\alpha\}$ in $L$). In addition, there is a  (primitive of definable) unary connective $\circ$ in $L$ (called a {\em consistency operator}) such that every formula $\beta$ follows in $L$ from a set of the form$\{\alpha,\neg\alpha, \circ\alpha\}$.\footnote{This is a slightly simplified presentation of the original definition of {\bf LFI}s.}  If $L$ is an  {\bf LFI} which is sublogic of  classical propositional logic ({\sf CPL}), presented in the same signature of $L$,\footnote{In this case, the formulas $\circ \alpha$ take the value 1 for every evaluation in  {\sf CPL}.}  then the consistency operator $\circ$ allows to recover {\sf CPL} inside  $L$ by adding additional hypothesis concerning the consistency (or `classicality', or `well-behavior') of some formulas. Namely, for every (finite) set $\Gamma \cup \{\psi\}$ of formulas,
$$\Gamma \vdash_{\sf CPL} \psi \ \mbox{ iff } \ (\exists \Lambda)[\Gamma, \{\circ \alpha \ : \  \alpha \in \Lambda\} \vdash_L \psi],$$
where $\Lambda$ is a set of formulas. This is what is called  a {\em Derivability Adjustment Theorem} (DAT).
The idea of DATs was proposed by Battens in the context of {\em Adaptive logics}, but this technique (as well as the notion of consistency operator) was already used by da Costa for his well-known hierarchy of paraconsistent systems $C_n$ (see~\cite{dac:63}).

A more interesting DAT (as, for instance, the ones obtained by da Costa) requires that the consistency (or well-behavior) operator $\circ$ can just be applied to the propositional variables occurring in  $\Gamma \cup \{\psi\}$. This suggests that, given two standard propositional logics $L_1$  and $L_2$ defined over the same signature $\Theta$ such that ${\vdash_{L_1}} \subseteq {\vdash_{L_2}}$, a DAT between both logics can be defined in terms of a {\em recovery} operator $\circ$ (generalizing the idea of {\bf LFI}s):
 for every (finite) $\Gamma \cup \{\psi\}$,
$$\Gamma \vdash_{L_2}\psi \ \mbox{ iff } \ \Gamma, \{\circ p_1, \ldots,\circ p_m\} \vdash_{L_1} \psi,$$
where $\{p_1, \ldots, p_m\}$ is the set of propositional variables occurring in $\Gamma \cup \{\psi\}$.

The idea then is that if one of such recovery operators $\circ_\varphi$ can be defined as  a family of instances of a theorem $\varphi$ of $L_2$ which is not derivable in $L_1$, and if this process can be reproduced for any of such formulas $\varphi$, then it will follow that $L_2$ is maximal w.r.t.\ $L_1$. To be more general, a finite recovery set $\bigcirc(p)$ of formulas depending only on one variable $p$ will be considered instead of a single formula $\circ(p)$, following the original definition of {\bf LFI}s. Actually, in Theorem~\ref{maxthm} below some sufficient conditions are given in order to define such recovery sets, which will allow us to determine if one logic is maximal w.r.t.\ another.

In what follows, $\mathcal{L}(\Theta)$ will denote the term algebra generated by a propositional signature $\Theta$ from a fixed set $P=\{p_n  \ : \ n \geq 1\}$ of propositional variables. If {\bf A} is an algebra over $\Theta$ then the set of homomorphisms from $\mathcal{L}(\Theta)$ to {\bf A} will be denoted by $Hom(\mathcal{L}(\Theta),{\bf A})$.

Given an algebra {\bf A} over $\Theta$ and a non-empty subset $F \subseteq A$, the pair $\langle {\bf A}, F\rangle$ is called a {\em logical matrix} \cite{Woj88}. The logic $L$ defined by the matrix $\langle {\bf A}, F\rangle$ over $\mathcal{L}(\Theta)$ is given by the following consequence relation: for every set of formulas $\Gamma \cup\{\varphi\} \subseteq \mathcal{L}(\Theta)$,
$$ \Gamma \vdash_L \varphi  \mbox{ if,  for all } e\in Hom(\mathcal{L}(\Theta),{\bf A}), e(\psi) \in F \mbox{ for all } \psi \in \Gamma \mbox{ implies } e(\varphi) \in F . $$
From now on, with no danger of confusion,  given a logical matrix $\langle {\bf A}, F\rangle$  we will write $L = \langle {\bf A}, F\rangle$ to refer to the corresponding induced logic defined as above. We will also use the term {\em matrix logic} to refer a logic defined by a logical matrix.

\begin{lemma} \label{IncMat}
Let $L_1=\langle {\bf A}_1, F_1\rangle$  and $L_2=\langle {\bf A}_2, F_2\rangle$ be two matrix logics defined over a signature $\Theta$ such that
${\bf A}_2$ is a subalgebra of ${\bf A}_1$ and $F_2 = F_1 \cap A_2$. Then ${\vdash_{L_1}} \subseteq {\vdash_{L_2}}$, that is: for every $\Gamma \cup \{\psi\}$, if $\Gamma \vdash_{L_1} \psi$ then $\Gamma \vdash_{L_2} \psi$.
\end{lemma}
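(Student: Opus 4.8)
The plan is to unwind the definitions of the two consequence relations and to transport evaluations between the two matrices by means of the inclusion map. Assume $\Gamma \vdash_{L_1} \psi$; to establish $\Gamma \vdash_{L_2} \psi$ I must show that every $e \in Hom(\mathcal{L}(\Theta),\mathbf{A}_2)$ sending all of $\Gamma$ into $F_2$ also sends $\psi$ into $F_2$. So I would fix such an $e$ and aim to produce $e(\psi) \in F_2$ by lifting the computation up to $\mathbf{A}_1$, where the hypothesis $\Gamma \vdash_{L_1} \psi$ is available.

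The key step is to observe that, since $\mathbf{A}_2$ is a subalgebra of $\mathbf{A}_1$, the set inclusion $\iota : A_2 \hookrightarrow A_1$ is a $\Theta$-homomorphism. Hence the composite $e^\ast := \iota \circ e$ lies in $Hom(\mathcal{L}(\Theta),\mathbf{A}_1)$, and as maps on underlying elements $e^\ast$ and $e$ agree, since the inclusion does not move points. In particular $e^\ast(\chi) = e(\chi)$ for every formula $\chi$, so I may freely identify the $\mathbf{A}_2$-value and the $\mathbf{A}_1$-value of any formula under this evaluation.

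Now I would feed $e^\ast$ into the hypothesis. For each $\gamma \in \Gamma$ we have $e(\gamma) \in F_2 = F_1 \cap A_2 \subseteq F_1$, hence $e^\ast(\gamma) = e(\gamma) \in F_1$. Since $\Gamma \vdash_{L_1} \psi$ and $e^\ast$ is an $\mathbf{A}_1$-evaluation satisfying all the premises, the definition of $\vdash_{L_1}$ yields $e^\ast(\psi) \in F_1$. Finally I descend again: $e(\psi) = e^\ast(\psi) \in F_1$, while also $e(\psi) \in A_2$ because $e$ takes values in $\mathbf{A}_2$; therefore $e(\psi) \in F_1 \cap A_2 = F_2$, as required.

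There is no serious obstacle here; the argument is a routine matrix-semantics computation. The only point deserving care is that the two hypotheses are used in opposite directions: the inclusion $F_2 \subseteq F_1$ (one half of $F_2 = F_1 \cap A_2$, together with the subalgebra assumption) is what lets the premises survive the passage up to $\mathbf{A}_1$, whereas the reverse inclusion $F_1 \cap A_2 \subseteq F_2$ is exactly what brings the conclusion back down to $\mathbf{A}_2$. Keeping track of which inclusion is invoked at each stage is essentially the whole content of the proof.
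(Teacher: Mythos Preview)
Your proof is correct and follows essentially the same route as the paper's: both fix an $\mathbf{A}_2$-evaluation $e$ satisfying the premises, view it as an $\mathbf{A}_1$-evaluation via the inclusion (your $e^\ast$ is the paper's $\bar e$), apply the hypothesis $\Gamma \vdash_{L_1}\psi$ to obtain $e(\psi)\in F_1$, and then intersect with $A_2$ to land in $F_2$. Your explicit mention of the inclusion map $\iota$ and the remark about which direction of $F_2 = F_1 \cap A_2$ is used at each step are nice expository touches, but the underlying argument is identical.
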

\begin{proof}
Assume that $\Gamma {\vdash_{L_1}} \psi$. Let $e \in Hom(\mathcal{L}(\Theta),{\bf A}_2)$ be an evaluation for $L_2$ such that $e[\Gamma] \subseteq F_2$. Let $\bar e:\mathcal{L}(\Theta)\to A_1$ such that $\bar e(\varphi)=e(\varphi)$ for every $\varphi \in \mathcal{L}(\Theta)$. Then $\bar e \in Hom(\mathcal{L}(\Theta),{\bf A}_1)$, so $\bar e$ is an evaluation for $L_1$ such that $\bar e[\Gamma] \subseteq F_1$. By hypothesis, $\bar e(\psi) \in F_1$ and so $e(\psi) \in F_1 \cap A_2=F_2$. This shows that   $\Gamma \vdash_{L_2} \psi$.
\end{proof}

After this previous lemma, we can state the main result on this section.

\begin{theorem} \label{maxthm}
Let $L_1=\langle {\bf A}_1, F_1\rangle$  and $L_2=\langle {\bf A}_2, F_2\rangle$ be two distinct finite matrix logics over a same signature $\Theta$ such that ${\bf A}_2$ is a subalgebra of ${\bf A}_1$ and $F_2 = F_1 \cap A_2$.
 Assume the following:
\begin{enumerate}
\item $A_1=\{0,1,a_1,\ldots,a_k,a_{k+1},\ldots,a_n\}$   and  $A_2=\{0,1,a_1,\ldots,a_k\}$ are finite such that $0 \not\in F_1$, $1 \in F_2$ and $\{0,1\}$ is a subalgebra of ${\bf A}_2$.
\item There are formulas $\top(p)$ and $\bot(p)$ in $\mathcal{L}(\Theta)$ depending at most on one variable $p$ such that $e(\top(p))=1$ and $e(\bot(p))=0$, for every evaluation $e$ for $L_1$.
\item For every $k+1 \leq i \leq n$ and $1 \leq j \leq n$ (with $i \neq j$) there exists a formula $\alpha^i_j(p)$ in $\mathcal{L}(\Theta)$ depending  at most on one variable $p$ such that, for every evaluation $e$, $e(\alpha^i_j(p))=a_j$ if $e(p)=a_i$. \end{enumerate}
Then,  $L_1$ is maximal w.r.t.\  $L_2$.
\end{theorem}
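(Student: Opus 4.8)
The plan is to first invoke Lemma~\ref{IncMat}, which together with the assumption that $L_1$ and $L_2$ are distinct yields $\vdash_{L_1}\,\subsetneq\,\vdash_{L_2}$, so that $L_1$ is a proper sublogic of $L_2$ and the definition of maximality applies. I would then fix an arbitrary formula $\varphi$ with $\vdash_{L_2}\varphi$ and $\nvdash_{L_1}\varphi$ (should no such $\varphi$ exist, maximality holds vacuously by Remark~\ref{vacuous}), write $p_1,\dots,p_r$ for its variables, and record a refuting tuple $\vec v=(v_1,\dots,v_r)\in A_1^r$ with $\varphi^{\mathbf{A}_1}(\vec v)\notin F_1$, guaranteed by $\nvdash_{L_1}\varphi$. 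Since matrix logics are substitution-invariant, every instance $\sigma(\varphi)$ is an $L_2$-theorem, so removing them as hypotheses is harmless in $L_2$; combined with $\vdash_{L_1}\,\subseteq\,\vdash_{L_2}$ this gives the easy inclusion $\vdash_{L_1^+}\,\subseteq\,\vdash_{L_2}$. The whole difficulty lies in the converse, $\vdash_{L_2}\,\subseteq\,\vdash_{L_1^+}$.

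The key device is a finite \emph{recovery set} $\bigcirc(p)$ of one-variable formulas, each a substitution instance of $\varphi$, such that for every $L_1$-evaluation $e$ all members of $\bigcirc(p)$ are designated if and only if $e(p)\in A_2$. I would build one formula per ``bad'' value $a_i$ with $k+1\le i\le n$, the idea being to reconstruct the entire refuting tuple $\vec v$ from the single value $a_i$. For each coordinate $s$ choose a one-variable term $t_s(p)$ with $e(t_s(p))=v_s$ whenever $e(p)=a_i$: take $t_s=\top(p)$ if $v_s=1$ and $t_s=\bot(p)$ if $v_s=0$ (Assumption~2), $t_s=p$ if $v_s=a_i$, and $t_s=\alpha^i_j(p)$ if $v_s=a_j$ with $j\ne i$ (Assumption~3). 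Setting $\beta_i(p)=\varphi(t_1(p),\dots,t_r(p))$, a substitution instance of $\varphi$ depending only on $p$, one obtains $e(\beta_i(p))=\varphi^{\mathbf{A}_1}(\vec v)\notin F_1$ whenever $e(p)=a_i$; conversely, when $e(p)\in A_2$ each $t_s(p)$ evaluates inside the subalgebra $A_2$, so $\beta_i(p)$ is computed in $\mathbf{A}_2$ and, $\varphi$ being an $L_2$-theorem, is designated. Then $\bigcirc(p)=\{\beta_{k+1}(p),\dots,\beta_n(p)\}$ has the required property. I expect this reconstruction to be the main obstacle: it is precisely here that Assumptions~2 and~3 enter, synthesizing the constants $0$ and $1$ and converting a fixed bad value into any prescribed truth value, so that one pinned variable can simulate an arbitrary refuting assignment of $\varphi$.

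With $\bigcirc(p)$ available the converse inclusion follows. Suppose $\Gamma\vdash_{L_2}\psi$, let $p_1,\dots,p_m$ be the variables occurring in $\Gamma\cup\{\psi\}$, and take any $L_1$-evaluation $e$ with $e[\Gamma]\subseteq F_1$ that designates every instance $\sigma(\varphi)$. Since each formula of $\bigcirc(p_t)$ is itself an instance of $\varphi$, $e$ designates all of $\bigcirc(p_t)$, whence $e(p_t)\in A_2$ for every $t$ by the recovery property. As $\mathbf{A}_2$ is a subalgebra of $\mathbf{A}_1$, the values of $e$ on formulas over $p_1,\dots,p_m$ coincide with those of the corresponding homomorphism into $\mathbf{A}_2$; hence $e[\Gamma]\subseteq F_1\cap A_2=F_2$, and $\Gamma\vdash_{L_2}\psi$ forces $e(\psi)\in F_2\subseteq F_1$. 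Therefore $\Gamma$ together with all instances of $\varphi$ entails $\psi$ in $L_1$, i.e.\ $\Gamma\vdash_{L_1^+}\psi$. The two inclusions give $L_1^+=L_2$, so $L_1$ is maximal w.r.t.\ $L_2$.
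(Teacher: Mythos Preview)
Your proposal is correct and follows essentially the same strategy as the paper's proof: both construct a finite recovery set $\bigcirc(p)$ of substitution instances of $\varphi$ that detects whether $e(p)\in A_2$, and then use it to transfer $L_2$-consequences to $L_1^+$. The only organizational difference is that the paper first builds an intermediate ``normalized'' instance $\gamma(p_1,\dots,p_n)=\sigma_0(\varphi)$ (so that the refuting evaluation becomes $p_j\mapsto a_j$) and then substitutes the $\alpha^i_j(p)$ into $\gamma$, whereas you compose these two substitutions into a single step when defining each $\beta_i(p)$; the resulting recovery sets are essentially identical.
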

\begin{proof}
Let us begin by  observing that the family of evaluations for $L_1$ which take values in $A_2$ for every propositional variable can be identified with the family of evaluations for $L_2$.\footnote{This fact was used in the proof of Lemma~\ref{IncMat}.}

Notice that, by  Lemma~\ref{IncMat}, ${\vdash_{L_1}} \subseteq {\vdash_{L_2}}$. Suppose that there is some formula $\varphi(p_1,\ldots,p_m)$  such that $\vdash_{L_2} \varphi$ but $\nvdash_{L_1} \varphi$ (otherwise the proof is done, by Remark~\ref{vacuous}). Then, $e(\varphi) \in F_2$ for every evaluation $e \in Hom(\mathcal{L}(\Theta),{\bf A}_2)$, but there is an homomorphism $e_0 \in Hom(\mathcal{L}(\Theta),{\bf A}_1)$ such that $e_0(\varphi) \not\in F_1$. By the observation at the beginning of the proof (and by considering that $F_2 \subseteq F_1$), there exists a propositional variable $p_i$ (for $1 \leq i \leq m$) such that $e_0(p_i) \not\in A_2$. Consider now a substitution $\sigma_0$ such that

$$\sigma_0(p)= \left \{ \begin{tabular}{ll}
$\top(p_1)$ & if $e_0(p)=1$,\\
$\bot(p_1)$& if $e_0(p)=0$,\\
$p_j$ & if $e_0(p)=a_j$ (for $1 \leq j \leq n$)\\
\end{tabular}\right. $$

\

\noindent and let $\gamma(p_1,\ldots,p_n)=\sigma_0(\varphi)$. Observe that some of the variables $p_j$ may not appear in $\gamma$, but at least one variable $p_j$ (with $k+1 \leq j \leq n$) must occur in $\gamma$, by the hypothesis over $e_0$. Now we can state two immediate facts:
\begin{description}
\item[{\bf Fact 1:}] Given an evaluation $e$ for $L_1$, if $e(p_j)\in A_2$ for every $1 \leq j \leq n$ then
$e(\gamma) \in F_2$.
\end{description}
{\em Proof:}  follows from the observation at the beginning  of the proof, and by noting that  $\gamma$ is an instance of a tautology of $L_2$.
\begin{description}
\item[{\bf Fact 2:}]  Given an evaluation $e$ for $L_1$, if $e(p_j)=a_j$ for $1 \leq j \leq n$ then  $e(\gamma) = e_0(\varphi) \not\in F_1$.
\end{description}
{\em Proof:} Observe that, from the hypothesis, it follows that $e(\sigma_0(p_i))=e_0(p_i)$ for every $1 \leq i \leq m$. \\

Now, for any propositional variable $p$, let  $\alpha^j_j(p)=p$ for every $1 \leq j \leq n$, and let $\bigcirc(p)$ be the finite set of formulas
$$\bigcirc(p) = \{\gamma(\alpha^i_1(p),\ldots,\alpha^i_n(p)) \ : \  k+1 \leq i \leq n\}.$$
Let $e$ be an evaluation in $L_1$. Observe the following:\\[1mm]
(i) If $e(p)\in A_2$ then $e(\alpha^i_j(p)) \in A_2$ (since ${\bf A}_2$ is a subalgebra). For each $k+1 \leq i \leq n$ let $e_i$ be an evaluation for $L_1$ such that $e_i(p_j)= e(\alpha^i_j(p))$, for every $1 \leq j \leq n$. Then $e_i(\gamma) \in F_2$ , by Fact 1. But $e_i(\gamma) = e(\gamma(\alpha^i_1(p),\ldots,\alpha^i_n(p)))$  and so
$e(\gamma(\alpha^i_1(p),\ldots,\alpha^i_n(p))) \in F_2$
for every $k+1 \leq i \leq n$. This means that $e[\bigcirc(p)] \subseteq F_1$ if $e(p)\in A_2$. \\[2mm]
(ii) If $e(p)\notin A_2$ then $e(p)=a_i$ for some $k+1 \leq i \leq n$. From this, $e(\alpha^i_j(p))=a_j$ for all $1 \leq j \leq n$. Let $e'$ be an evaluation for $L_1$ such that $e'(p_j)=a_j$, for every $1 \leq j \leq n$. Then $e'(\gamma) = e(\gamma(\alpha^i_1(p),\ldots,\alpha^i_n(p)))$. But, by Fact 2, $e'(\gamma) =e_0(\varphi) \notin F_1$ and so $e(\gamma(\alpha^i_1(p),\ldots,\alpha^i_n(p))) \notin F_1$. Thus, $e[\bigcirc(p)] \not\subseteq F_1$ if $e(p) \notin A_2$. Equivalently, $e(p) \in A_2$ if $e[\bigcirc(p)] \subseteq F_1$. \\[1mm]
From the observations (i) and (ii) it follows that\\

\hspace{0.5cm} $(*) \hspace{2cm}e[\bigcirc(p)] \subseteq F_1 \ \mbox{ iff } \ e(p) \in A_2.$\\[2mm]
Finally, let $L_1^+$ be the logic obtained from $L_1$ by adding $\varphi$ (and all of its instances) as a theorem.
As observed above,
$$\Gamma \vdash_{L_1^+} \psi \ \ \mbox{ iff }  \Gamma, \{\sigma(\varphi) \ : \ \sigma \ \mbox{is a substitution in $\mathcal{L}(\Theta)$}\} \vdash_{L_1} \psi.$$

\begin{description}
\item[{\bf Fact 3:}]
Let $\Gamma \cup \{\psi\}$ be  a finite a set of formulas in $\mathcal{L}(\Theta)$ depending on the variables $p_1,\ldots,p_t$. Then \\[2mm]
$(**) \hspace{2cm}\Gamma \vdash_{L_2} \psi \ \ \mbox{ iff }  \Gamma, \bigcirc(p_1), \ldots, \bigcirc(p_t) \vdash_{L_1} \psi.$ 
\end{description}
{\em Proof:} Assume that $\Gamma \vdash_{L_2} \psi$ and let  $e \in Hom(\mathcal{L}(\Theta),{\bf A}_1)$ such that $e[\Gamma \cup \bigcup_{i=1}^t \bigcirc(p_i)] \subseteq F_1$. By $(*)$, $e(p_i) \in A_2$ for every $1 \leq i \leq t$. Consider now an evaluation $\bar e \in Hom(\mathcal{L}(\Theta),{\bf A}_2)$ such that $\bar e(p)=e(p)$ if $p \in \{p_1,\ldots,p_t\}$, and $\bar e(p)=0$ otherwise. Then $\bar e(\beta)=e(\beta)$ for every $\beta$ in $\mathcal{L}(\Theta)$ depending on the variables $p_1,\ldots,p_t$. Thus, $\bar e[\Gamma] \subseteq F_1 \cap A_2 = F_2$ whence $\bar e(\psi) \in F_2$, by hypothesis. That is, $e(\psi) \in F_1$ and so  $\Gamma, \bigcirc(p_1), \ldots, \bigcirc(p_t) \vdash_{L_1} \psi$.

Conversely, assume that $\Gamma, \bigcirc(p_1), \ldots, \bigcirc(p_t) \vdash_{L_1} \psi$ and consider an evaluation $\bar e\in Hom(\mathcal{L}(\Theta),{\bf A}_2)$ such that $\bar e[\Gamma] \subseteq F_2$. Define an evaluation $e \in Hom(\mathcal{L}(\Theta),{\bf A}_1)$ such that $e(p)=\bar e(p)$ for every variable $p$. Then $e(\beta)=\bar e(\beta)$ for every $\beta$ in $\mathcal{L}(\Theta)$ and so $e[\Gamma] \subseteq F_1$ and also $e[\bigcirc(p_i)] \subseteq F_1$ for every $1 \leq i \leq t$, by $(*)$. By hypothesis, $e(\psi) \in F_1$ and then $\bar e(\psi) \in F_1 \cap A_2$, that is, $\bar e(\psi) \in F_2$. This shows that  $\Gamma \vdash_{L_2} \psi$, proving Fact 3.\\

Consider now a finite a set of formulas   $\Gamma \cup \{\psi\}$ in $\mathcal{L}(\Theta)$ depending on the variables $p_1,\ldots,p_t$. Suppose that $\Gamma \vdash_{L_2} \psi$. Then $\Gamma, \bigcirc(p_1), \ldots, \bigcirc(p_t) \vdash_{L_1} \psi$, by Fact 3. But the latter implies that $\Gamma, \{\sigma(\varphi) \ : \ \sigma \ \mbox{is a substitution in $\mathcal{L}(\Theta)$}\} \vdash_{L_1} \psi$, because each $\bigcirc(p_i)$ is a set of instances of $\varphi$. From this, it follows that $\Gamma \vdash_{L_1^+} \psi$, by definition of $L_1^+$.

On the other hand,  suppose that $\Gamma \vdash_{L_1^+} \psi$. Given that ${\vdash_{L_1}} \subseteq {\vdash_{L_2}}$ (by  Lemma~\ref{IncMat})  and that $\vdash_{L_2} \varphi$ (by hypothesis) then $\Gamma \vdash_{L_2} \psi$, by definition of $L_1^+$. This shows that $L_1^+$ coincides with $L_2$ and so $L_1$ is maximal w.r.t.\ $L_2$.
\end{proof}

In the next example we show an application of Theorem~\ref{maxthm} in order to prove some maximality conditions for two logics related to the well-known 4-valued logic $\mathcal{FOUR}$ introduced by Belnap and Dunn \cite{Du76,Be76,Be77}. \\

\begin{figure}[h!] \label{m4}
   \centerline{ \includegraphics[width=0.5\textwidth]{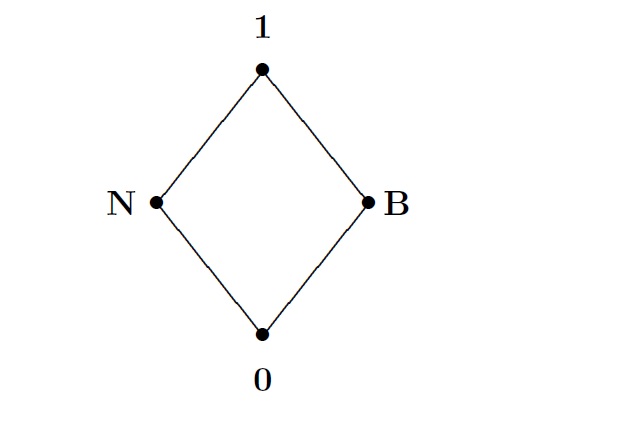}}
    \caption{Lattice $M_4$.}
\end{figure}

\begin{example}
Consider Belnap-Dunn's matrix logic  $\mathcal{BD}=\langle \mathfrak{M}_{4},\{1,B\}\rangle$, where  $\mathfrak{M}_{4}=\langle M_4, \land, \lor, \neg\rangle$ is the algebra associated to the {\em logical lattice} $M_4$ (see Fig.~1) expanded with the De Morgan negation $\neg$ defined as:

\begin{center}
\begin{tabular}{|c||c|} \hline
$\quad$ & $\neg$ \\
 \hline \hline
    $1$   & 0    \\ \hline
     $B$   &  $B$    \\ \hline
     $N$   &  $N$    \\ \hline
     $0$   & $1$    \\ \hline
\end{tabular}
\end{center}

Much later, De and Omori considered in~\cite{DO2015} the expansion $\mathcal{BD}^{\sim}$ of $\mathcal{BD}$ by adding the strong negation $\sim$, given by the following table:\\

\begin{center}
\begin{tabular}{|c||c|}         \hline
$x$ & ${\sim}x$ \\ \hline
0       & 1          \\
N       & B          \\
B       & N          \\
1       & 0          \\ \hline
\end{tabular}
\end{center}

\noindent  On the other hand, before Belnap and Dunn's investigations,  L. Monteiro  already considered in 1963  (see~\cite{LMonteiro}) the 4-valued algebra  $\mathfrak{M}_{4m}$ obtained from  $\mathfrak{M}_{4}$  by adding a modal operator $\square$  defined as follows:

\begin{center}
\begin{tabular}{|c||c|} \hline
$\quad$ & $\square$ \\
 \hline \hline
    $1$   & 1    \\ \hline
     $B$   & $0$    \\ \hline
     $N$   & $0$    \\ \hline
     $0$   & $0$    \\ \hline
\end{tabular}
\end{center}

This led to  A. Monteiro to consider the variety $\mathbf{TMA}$ of {\em tetravalent modal algebras}, which is the one generated by  $\mathfrak{M}_{4m}$ (cf.~\cite{Loureiro}).  As  proven by Font and Rius in~\cite{FR2}, the (degree-preserving) logic of $\mathbf{TMA}$ is characterized by the matrix logic ${\cal M}_B =\langle\mathfrak{M}_{4m},\{B, 1\} \rangle$.
Previous to \cite{DO2015} and with a different motivation, Coniglio and Figallo define in~\cite{CF2014} the logic ${\cal M}_B^{\sim} =\langle\mathfrak{M}_{4m}^{\sim},\{B, 1\} \rangle$, the expansion of ${\cal M}_B$ with the strong negation $\sim$ described above,  characterizing the (degree-preserving) logic of the variety generated by $\mathfrak{M}_{4m}^{\sim}$  (which was independently introduced by A. Monteiro in~\cite{Monteiro:69} and by G. Moisil in~\cite{Ml72}.)

By using Theorem~\ref{maxthm}, it is easy to show that both  ${\cal M}_B^{\sim}$ and $\mathcal{BD}^{\sim}$ are maximal relative to {\sf CPL} presented in the signature $\Theta=\{\land,\lor,\neg\sim,\square\}$ and $\Theta'=\{\land,\lor,\neg\sim\}$ over the two-element Boolean algebra $\mathfrak{B}_2$, respectively (where $\square p$ is equivalent to $p$ and  $\neg p$ is equivalent to  ${\sim}p$). Indeed, observe that ${\bf B}_2$ (expanded by $\sim$ and $\square$) is a subalgebra of $\mathfrak{M}_{4m}^{\sim}$, and $\top(p)=p \lor {\sim} p$ and $\bot(p)=p \land {\sim}p$ are as required. Notice that, since there are in $M_4$ just two values besides the `classical' ones, namely $a_1=N$ and $a_2=B$, the formulas $\alpha^1_2(p)=\alpha^2_1(p)={\sim}p$ are such that $e(\alpha^1_2(p))=B$ if $e(p)=N$, $e(\alpha^2_1(p))=N$ if $e(p)=B$. Therefore, it follows from Theorem~\ref{maxthm} that ${\cal M}_B^{\sim}$  is maximal reative to {\sf CPL} presented over the signature $\Theta$. Similarly, it also follows that $\mathcal{BD}^{\sim}$ is maximal relative to {\sf CPL} presented over the signature $\Theta'$ (the latter corresponding to~\cite[Theorem~3]{DO2015}).  \hfill $\blacksquare$
\end{example}

As an immediate consequence of Theorem~\ref{maxthm}, it follows that any 3-valued logic which extends {\sf CPL} and it can express the top and the bottom formulas, is maximal w.r.t.\ {\sf CPL}.

\begin{corollary} \label{3val-max}
Let ${\bf A}_1$ be an algebra defined over a signature $\Theta$ with domain $A_1=\{0,1/2,1\}$, and consider the matrix logic  $L_1=\langle {\bf A}_1, F_1\rangle$ where $0 \not\in F_1$ and $1 \in F_1$. Further, let ${\bf A}_2$ be a subalgebra of ${\bf A}_1$, with $A_2 = \{0, 1\}$, and assume that the matrix logic $L_2=\langle {\bf A}_2, \{1\}\rangle$ is a presentation of classical propositional logic {\sf CPL} over signature $\Theta$ such that $L_2$ is distinct from $L_1$.
Suppose additionally there are formulas $\top(p)$ and $\bot(p)$ in $\mathcal{L}(\Theta)$ on one variable $p$ such that $e(\top(p))=1$ and $e(\bot(p))=0$, for every evaluation $e$ for $L_1$. Then,  $L_1$ is maximal w.r.t.\ {\sf CPL} (presented as $L_2$).
\end{corollary}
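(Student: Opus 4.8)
The plan is to obtain this corollary as a direct instance of Theorem~\ref{maxthm}, so the entire task reduces to checking that the three numbered hypotheses of that theorem hold in the present, highly restricted, setting. I would begin by fixing the correspondence between the notation of the theorem and that of the corollary: here the full domain has exactly one non-classical truth-value, so I set $n=1$, $k=0$ and $a_1 = 1/2$. Under this identification, $A_1 = \{0,1,a_1\}$ has precisely the shape $\{0,1,a_1,\ldots,a_k,a_{k+1},\ldots,a_n\}$ demanded by the theorem (the block $a_1,\ldots,a_k$ being empty), while $A_2 = \{0,1\}$ is exactly $\{0,1,a_1,\ldots,a_k\}$ with $k=0$.

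Next I would verify the standing matrix hypotheses. That ${\bf A}_2$ is a subalgebra of ${\bf A}_1$ is assumed directly. For the condition $F_2 = F_1 \cap A_2$, I compute $F_1 \cap A_2 = F_1 \cap \{0,1\}$; since by assumption $0 \notin F_1$ and $1 \in F_1$, this intersection equals $\{1\}$, which is exactly the filter $F_2 = \{1\}$ of $L_2$. For hypothesis~(1) of the theorem I then note $0 \notin F_1$ (given), $1 \in F_2 = \{1\}$ (trivially), and that $\{0,1\} = A_2$ is a subalgebra of ${\bf A}_2$ (namely ${\bf A}_2$ itself). Hypothesis~(2) is precisely the existence of the formulas $\top(p)$ and $\bot(p)$ with the stated values, which is assumed verbatim in the corollary.

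The one point worth dwelling on --- and the reason a 3-valued logic requires no \emph{permutation} auxiliaries --- is hypothesis~(3). With $k=0$ and $n=1$, the index $i$ ranges over $k+1 \leq i \leq n$, i.e.\ $i=1$ only, and $j$ ranges over $1 \leq j \leq n$, i.e.\ $j=1$ only; but the clause additionally requires $i \neq j$. Since the only candidate values force $i=j=1$, there is no admissible pair $(i,j)$ whatsoever, and hypothesis~(3) is therefore satisfied vacuously. This is exactly where the simplicity of the three-element case is exploited: the single non-classical value $1/2$ never has to be mapped to a \emph{different} non-classical value, so the formulas $\alpha^i_j(p)$ are simply not called for.

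Having checked all three hypotheses, together with the assumed distinctness of $L_1$ and $L_2$, I would conclude by invoking Theorem~\ref{maxthm} to obtain that $L_1$ is maximal with respect to $L_2$, which is {\sf CPL} in the given presentation. I do not anticipate any genuine obstacle: the only subtlety is the bookkeeping needed to match the theorem's indexing to the degenerate case $n=1$, $k=0$, and once that identification is made explicit the verification is routine and the conclusion is immediate.
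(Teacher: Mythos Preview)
Your proposal is correct and follows exactly the paper's approach: verify that $F_2=\{1\}=F_1\cap A_2$ so that Lemma~\ref{IncMat} applies, observe that hypothesis~(3) of Theorem~\ref{maxthm} is satisfied vacuously because there is a single non-classical value $a_1=1/2$, and invoke the theorem. The paper's proof is simply a terser version of what you wrote.
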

\begin{proof}
Observe that $L_1$ and $L_2$  are  matrix logics as in Lemma~\ref{IncMat}, since $\{1\}=F_1 \cap A_2$.
Given that $A_1$ contains just one element out of $ \{0,1\}$, namely $a_1=1/2$, then Theorem~\ref{maxthm} can be applied (since requirement~(3) is satisfied by vacuity). As a consequence of Theorem~\ref{maxthm}, $L_1$ is maximal w.r.t.\ {\sf CPL} (presented as $L_2$).
\end{proof}

In the next example some instances of Corollary~\ref{3val-max} are analyzed, showing the strength of this result: indeed, several well-known 3-valued logics which are known to be maximal w.r.t.\ {\sf CPL} fall inside the scope of  Corollary~\ref{3val-max}. 

\begin{example} \label{ExL3}
(1) Let us begin with \L ukasiewicz 3-valued logic $\L_3=\langle \algLfin_3,\{1\}\rangle$, where $\algLfin_3$ is the usual 3-valued algebra for $\L_3$ over $\Theta=\{\neg,\to\}$ with domain $\{0,1/2,1\}$.
 Let $\mathsf{L}_1^1=\langle {\bf B}_2,F\rangle$ be a presentation of {\sf CPL}, where  ${\bf B}_2$ is  the two-element Boolean algebra over $\Theta$ with domain $\{0,1\}$ and $F=\{1\}$. It is easy to see that $\L_3$ satisfies the requirements of Corollary~\ref{3val-max} by taking $\top(p)=(p \to p)$ and $\bot(p)=\neg (p \to p)$. This produces a new proof of the maximality of $\L_3$ w.r.t.\ {\sf CPL}. In order to illustrate this fact consider by  instance  $\varphi(p_1)=p_1 \vee \neg p_1 := (p_1 \to \lnot p_1)\to \lnot p_1$, a formula which is valid in {\sf CPL} but it is not valid in  $\L_3$. Indeed, any evaluation $e_0$ in $\L_3$ where $e_0(p_1)=1/2$ is such that $e_0(\varphi)=1/2$, a non-designated truth-value. By following the construction described in the proof of Theorem~\ref{maxthm} (where $\alpha^1_1(p)=p$), it follows that $\gamma(p_1)=\varphi(p_1)$, and so $\circ(p)=p \vee \neg p$ is a recovery operator for $\L_3$ w.r.t.\ {\sf CPL} defined in terms of  $\varphi$. Thus, $\L_3$ plus $\varphi$ coincides with {\sf CPL}. Notice that the truth-table of the recovery operator $\circ$ is as follows:
$$
\begin{array}{|c||c|} \hline
   & \circ  \\ \hline \hline
  1 & 1   \\ \hline
  1/2 & 1/2  \\ \hline
  0 & 1  \\ \hline
\end{array}
$$
(2) Consider now the logic $\mathsf{L}_2^1=\langle \algLfin_3,\{1, 1/2\}\rangle$. As it is well known, the matrices of $\L_3$ are functionally equivalent to that of the 3-valued paraconsistent logic ${\sf J}_3$, introduced by da Costa and  D'Ottaviano, see \cite{dot:dac:70}. This means that  $\mathsf{L}_2^1$ coincides with ${\sf J}_3$ up to language. By item~(1) and  Corollary~\ref{3val-max} it follows that  $\mathsf{L}_2^1$ is maximal w.r.t.\ {\sf CPL}.  This constitutes a new proof of the maximality of ${\sf J}_3$ (and all of its alternative presentations, such as {\bf LFI1} or {\bf MPT}, see~\cite{Con:Sil}) w.r.t.\ {\sf CPL}. A generalization of ${\sf J}_3$ to $\L_4$, called ${\sf J}_4$, will be proposed in Subsection \ref{sectJ4}. As an illustration  of how the technique of the proof works, let $\varphi(p_1)=\neg((\neg p_1 \to p_1) \wedge (p_1 \to  \neg p_1))$. It is easy to see that    $\varphi(p_1)$ is valid in {\sf CPL} but it is not valid in    $\mathsf{L}_2^1$: any evaluation $e_0$ in $\mathsf{L}_2^1$ with $e_0(p_1)=1/2$ is such that $e_0(\varphi)=0$. Then, by the proof of Theorem~\ref{maxthm} (where $\alpha^1_1(p)=p$),  $\gamma(p_1)=\varphi(p_1)$ and so $\circ(p)= \neg((\neg p \to p) \wedge (p \to  \neg p))$ is a recovery operator  for   $\mathsf{L}_2^1$ w.r.t.\ {\sf CPL} defined in terms of instances of $\varphi$. This means that  $\mathsf{L}_2^1$ plus $\varphi$ coincides with {\sf CPL}.  The truth-table of $\circ$ is as follows:
$$
\begin{array}{|c||c|} \hline
   & \circ  \\ \hline \hline
  1 & 1  \\ \hline
  1/2 &  0  \\ \hline
  0 & 1  \\ \hline
\end{array}
$$
(3) In an unpublished draft, J. Marcos~\cite{Marcos} (see also~\cite[Section 5.3]{car:con:mar:07}) proposes a family of 8,192 logics which are 3-valued and paraconsistent, belonging to the hierarchy of {\bf LFI}s. Among these logics, there is ${\sf J}_3$  (whose truth-tables can define the matrices of all the other logics in the family) and Sette's logic $\mathsf{P}^1$ (see~\cite{Sette}), whose truth-tables are definable by the matrices of any of the logics in the family. All these logics are maximal w.r.t.\ {\sf CPL} presented in the signature $\{\land,\lor,\to,\neg,\circ\}$ such that $\circ\varphi$ is valid for every $\varphi$ (that is, algebraically, $\circ(x)=1$ for all  $x \in \{0,1\}$). The proof of maximality of all these logics w.r.t.\ {\sf CPL} follows easily from Corollary~\ref{3val-max}  by taking $\top(p)=p \to p$ and $\bot(p)=p \land \neg p \land \circ p$.\\[1mm]
(4) Let $\mathsf{I}^1$ be the 3-valued paracomplete logic introduced by A.M. Sette and W.A. Carnielli in~\cite{SetCar}. It is defined over $\Theta=\{\to,\neg\}$ with domain $\{0,1/2,1\}$ and designated value 1, and whose operations are given by the tables below.

$$
\begin{array}{|c||c|c|c|}  \hline
  \to & 1 & 1/2  & 0\\ \hline \hline
  1 & 1 & 0 & 0 \\ \hline
  1/2 & 1 &  1  & 1\\ \hline
  0 & 1 & 1  & 1 \\ \hline
\end{array}
\hspace{1 cm}
\begin{array}{|c||c|} \hline
   & \neg   \\ \hline \hline
  1 & 0   \\ \hline
  1/2 & 0  \\ \hline
  0 & 1  \\ \hline
\end{array}
$$

\noindent Once again, the maximality of $\mathsf{I}^1$ w.r.t.\ {\sf CPL} follows from Corollary~\ref{3val-max} by taking $\top(p)=p \to p$ and $\bot(p)=\neg(p \to p)$.\\[1mm]
(5) Let $G_{n+1}=\langle {\bf G}_{n+1},\{1\}\rangle$ be the $(n+1)$-valued G\"{o}del logic defined over the algebra ${\bf G}_{n+1}$ for $\Theta=\{\land,\lor,\to,\neg\}$ with domain $\big\{0,\frac{1}{n},\dots,\frac{n-1}{n},1\big\}$ such that $x \land y = \min \{x,y\}$; $x \lor y = \max \{x,y\}$; $x \to y= 1$ if $x \leq y$ and $x \to y= y$ otherwise; and $\neg x =1$ if $x=0$ and $\neg x=0$ otherwise. In particular, $G_3$ is defined over $\{0,1/2,1\}$ with the following tables for $\to$ and $\neg$:

$$
\begin{array}{|c||c|c|c|}  \hline
  \to & 1 & 1/2  & 0\\ \hline \hline
  1 & 1 & 1/2 & 0 \\ \hline
  1/2 & 1 &  1  & 0\\ \hline
  0 & 1 & 1  & 1 \\ \hline
\end{array}
\hspace{1 cm}
\begin{array}{|c||c|} \hline
   & \neg   \\ \hline \hline
  1 & 0   \\ \hline
  1/2 & 0  \\ \hline
  0 & 1  \\ \hline
\end{array}
$$

\noindent Clearly $G_3$ falls within the scope of Corollary~\ref{3val-max} (where  $\top(p)=p \to p$ and $\bot(p)=p \land \neg p$) and so it is maximal w.r.t.\ {\sf CPL} presented over $\Theta$. Observe that  for $n \geq 3$ the algebra ${\bf G}_{n+1}$ does not have enough expressive power to define all the formulas $\alpha^i_j$ in order to apply Theorem~\ref{maxthm}. For instance, in ${\bf G}_{4}$ there are no formulas $\alpha^1_2(p)$ and $\alpha^2_1(p)$ such that $e(\alpha^1_2(p))=2/3$ if $e(p)=1/3$ and  $e(\alpha^2_1(p))=1/3$ if $e(p)=2/3$. \hfill $\blacksquare$
\end{example}

\begin{example} \label{EjIdeal} In \cite{ArieliAZ11a} the authors introduced the notion of {\em ideal paraconsistent logics}.  Together with this, they presented a family $\mathcal{M}_{n+2}$ of $(n+2)$-valued matrix logics (with $n \geq 2$) which are ideal paraconsistent (and so, from the very definition, they are also maximal w.r.t. ${\sf CPL}$, see Definition~\ref{IdPar} in Section~\ref{sectIdeal}). The fact that  all these logics are maximal w.r.t. $\sf CPL$ (as proved in  \cite{ArieliAZ11a}) can also be proved by applying Theorem~\ref{maxthm}, as it will be shown in what follows.

Given $n\geq 2$ consider the algebras $\mathbf{A}_{n+2}$ over the signature $\Theta=\{\neg,\diamond,\supset\}$ with domain $A_{n+2}=\{0,1,a_1,\ldots,a_n\}$ such that the operations are defined as follows: $\neg 0=1$, $\neg 1=0$ and $\neg x=x$ otherwise; $\diamond 0=1$,  $\diamond 1=0$,  $\diamond a_i=a_{i+1}$ if $i<n$ and  $\diamond a_n=a_1$; $x \supset y = 1$ if   $x \notin D=\{1,a_1\}$ and $x \supset y = y$ otherwise. The  logic $\mathcal{M}_{n+2}$ is defined by the logical matrix $\langle\mathbf{A}_{n+2},D\rangle$ for every $n\geq 2$. Let us see that the conditions of Theorem~\ref{maxthm} are satisfied for every logic $\mathcal{M}_{n+2}$ w.r.t. $\sf CPL$. It is easy to see that $\{0,1\}$  is a subalgebra of $\mathbf{A}_{n+2}$ and so, by Lemma \ref{IncMat}, $\mathcal{M}_{n+2}$ is a sublogic of $\sf CPL$ presented in the signature $\Theta$ in which $\diamond$ coincides with negation and  $1$ is the designated value. In addition, it is easy to see that, given a propositional variable $p$, the formulas $\top(p)=(p \supset \diamond p) \supset (p \supset \diamond p)$ and $\bot(p)=\neg\top(p)$ are such that $e(\top(p))=1$ and $e(\bot(p))=0$, for every evaluation $e$. Consider now the formulas $\alpha^i_j(p)=\diamond^{j-i}p$ if $i<j$ and    $\alpha^i_j(p)=\diamond^{n-i+j}p$ if $i>j$, where $\diamond^{0}p=p$ and $\diamond^{i+1}p=\diamond\diamond^i p$, for every $i$. An easy computation shows that $e(\alpha^i_j(p))=a_j$ if $e(p)=a_i$, for every $i\neq j$. Therefore, the conditions of  Theorem~\ref{maxthm} are fullfilled and so each logic  $\mathcal{M}_{n+2}$ is maximal w.r.t. $\sf CPL$.

The question of ideal paraconsistent logics in the present framework will be treated again in Section \ref{sectIdeal}.\hfill $\blacksquare$
\end{example}

The examples given above show the value of  Theorem~\ref{maxthm} in order to state maximality of logics under certain hypothesis concerning the expressive power of the given logics. Indeed,  several proofs of maximality found in the literature can be easily obtained as a consequence of Theorem~\ref{maxthm}: for instance, the ones given for the 3-valued paraconsistent logic $\mathsf{P}^1$ in \cite[Proposition 11]{Sette}, for the 3-valued logic  $\mathsf{I}^1$  in \cite[Proposition 17]{SetCar} and for ${\sf J}_3$ (formulated as the equivalent logic {\sf LFI1}) in  \cite[Theorem 4.6]{CarMarAmo}, respectively. It is worth noting that all the examples of maximality of a logic $L_1$ w.r.t. another logic $L_2$ given in this section, as well as the examples to be given in the rest of the paper, are non-vacuous in the sense of  Remark~\ref{vacuous}. Indeed, in all the examples of maximality presented here the set of theorems of $L_1$ is strictly contained in the set of theorems of $L_2$, thus the notion of maximality holds in a non-trivial way. For instance,   the formula $p \to \circ p$ is a theorem of {\sf CPL} which does not hold in any of the logics presented in Example~\ref{ExL3}(3), while the formula $p \to \neg\diamond p$ is a theorem of {\sf CPL} which does not hold in any of the systems $\mathcal{M}_{n+2}$ presented in Example~\ref{EjIdeal}.
 On the other hand, it should be observed that the set of designated values may not play a relevant role with respect to maximality, for instance, when analyzing maximality with respect to $\sf CPL$ (recall e.g. Corollary \ref{3val-max} or see Proposition \ref{maxLn} in next section).

As observed above,  Theorem~\ref{maxthm} cannot be applied to  logics which do not have enough expressive power, as seen in Examples~\ref{ExL3}(5) for G\"{o}del's logics $G_n$ (with $n \geq 4$). This is not the case for finite-valued {\L}ukasiewicz logics, as it will be shown in  the next section.

\section{Maximality between finite-valued {\L}ukasiewicz logics induced by order filters}
\label{Sect-max}

In the rest of the paper we will deal with matrix logics based on the family of finite-valued {\L}ukasiewicz logics $\L_n$ with $n \geq 2$.
The $(n+1$)-valued {\L}ukasiewicz logic can be semantically defined as the matrix logic
$$\L_{n+1}=\langle \algLfin_{n+1}, \{1\}  \rangle, $$
where {$\algLfin_{n+1} = (\domLfin_{n+1}, \neg, \to)$} with $\domLfin_{n+1} = \big\{0,\frac{1}{n},\dots,\frac{n-1}{n},1\big\}$, and the operations are defined as follows: for every $x,y \in \domLfin_{n+1}$,
\begin{itemize}
\item[] $\neg x =1-x$ (\L ukasiewicz negation)
\item[] $x \to y = \min \{1, 1-x+y\}$ (\L ukasiewicz implication)
\end{itemize}

\noindent The following operations can be defined in every algebra $\algLfin_{n+1}$:

\begin{itemize}
\item[] $x \otimes y=\neg(x \to \neg y)=\max \{0, x+y-1\}$ (strong conjunction)
\item[] $x \oplus y=\neg x \to y=\min \{1, x+y\}$ (strong disjunction)
\item[] $x \vee y=(x \to  y) \to y=\max \{x,y\}$ (lattice disjunction)
\item[] $x \wedge y=\neg((\neg x \to  \neg y) \to \neg y)=\min \{x,y\}$ (lattice conjunction)
\end{itemize}

Observe that $\L_2$ is the usual presentation of  classical propositional logic {\sf CPL}  as a matrix logic over the two-element Boolean algebra ${\bf B}_2$ with domain $\{0,1\}$ with signature $\{\neg,\to\}$.

The logics $\L_{n}$ can also be presented as Hilbert calculi that are axiomatic extensions of the infinite-valued {\L}ukasiewicz logic \L$_\infty$.  Recall that $\L_{\infty}$ is algebraizable and the class $MV$ of all MV-algebras is its equivalent quasivariety semantics \cite{RTV90,CiMuOt99}. Since algebraizability is preserved by finitary extensions then every finite valued $\L$ukasiewicz logic $\L_n$ is also algebraizable, and we will denote by $MV_n$ its corresponding subvariety of algebras.

In this section,  finite-valued {\L}ukasiewicz logics with a set of designated values possibly different to $\{ 1\}$ will be studied from the point of view of maximality.  First, some notation will be introduced.

For every $i\geq 1$ and for every $x \in\domLfin_{n+1}$, $i x$ will stand for $x\oplus\dots\oplus x$ ($i$-times), while $x^i$ will stand for $x\otimes\dots\otimes  x$ ($i$-times).

For $1 \leq i \leq n$ let
$$F_{i/n} =\{x \in \domLfin_{n+1} \ : \  x \geq i/n\}=\big\{\frac{i}{n},\dots,\frac{n-1}{n},1\big\}$$
be the order filter generated by $i/n$, and let
$$\mathsf{L}^i_n=\langle \algLfin_{n+1}, F_{i/n}  \rangle$$
be the corresponding matrix logic.
From now on,  the consequence relation of $\mathsf{L}^i_n$ is denoted by $\vDash_{\mathsf{L}^i_n}$.
Observe that $\L_{n+1}= \mathsf{L}^n_n$ for every $n$. In particular, {\sf CPL} is $\mathsf{L}^1_1$ (that is, $\L_2$). If  $1 \leq i, m \leq n$, we can also consider the following matrix logic:
$$\mathsf{L}^{i/n}_m=\langle \algLfin_{m+1}, F_{i/n} \cap \domLfin_{m+1}  \rangle.$$
Since $ F_{i/n} \cap \domLfin_{m+1} = F_{j/m}$ for some  $1\leq j\leq m$, $\mathsf{L}^{i/n}_m=\mathsf{L}^{j}_{m}$ for that $j$. It is interesting to notice that some of these logics are paraconsistent, and some are not. Indeed, it is easy to prove the following characterization.

\begin{proposition} \label{parLqi} The logic $\mathsf{L}^i_n$ is paraconsistent w.r.t.\ $\neg$ iff $i/n \leq 1/2$.
\end{proposition}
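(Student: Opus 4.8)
The plan is to reduce paraconsistency to a purely arithmetic condition on the truth-values, using the semantic (matrix) characterization of the consequence relation $\vDash_{\mathsf{L}^i_n}$. Recall that the logic is paraconsistent w.r.t.\ $\neg$ exactly when there are formulas $\alpha,\beta$ with $\alpha,\neg\alpha \nvDash_{\mathsf{L}^i_n} \beta$. I would take $\alpha = p$ and $\beta = q$ to be two distinct propositional variables: since $q$ is independent of $p$ and $0 \notin F_{i/n}$ (as $i \geq 1$), an evaluation can always send $q$ to a non-designated value. Hence $p, \neg p \vDash_{\mathsf{L}^i_n} q$ holds iff \emph{no} evaluation $e$ simultaneously satisfies $e(p) \in F_{i/n}$ and $e(\neg p) = 1 - e(p) \in F_{i/n}$. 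Consequently, $\mathsf{L}^i_n$ is paraconsistent w.r.t.\ $\neg$ iff there exists a value $x \in \domLfin_{n+1}$ with both $x \in F_{i/n}$ and $1 - x \in F_{i/n}$.

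Next I would solve this existence problem. The conditions $x \geq i/n$ and $1 - x \geq i/n$ amount to $i/n \leq x \leq (n-i)/n$, so such a value exists iff the interval $[i/n, (n-i)/n]$ is nonempty and meets the discrete set $\domLfin_{n+1}$. For sufficiency, assuming $i/n \leq 1/2$, I would exhibit the explicit witness $x = i/n$: it lies in $\domLfin_{n+1}$ by definition, clearly $x \geq i/n$, and $1 - x = (n-i)/n \geq i/n$ precisely because $i/n \leq 1/2$ (i.e.\ $2i \leq n$). Any evaluation with $e(p) = i/n$ and $e(q) = 0$ then witnesses $p, \neg p \nvDash_{\mathsf{L}^i_n} q$, giving paraconsistency.

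For necessity I would argue the contrapositive: if $i/n > 1/2$, then for every $x \in F_{i/n}$ we have $x \geq i/n > 1/2$, so $1 - x < 1/2 < i/n$ and hence $\neg x = 1 - x \notin F_{i/n}$. Thus no evaluation can designate both $p$ and $\neg p$; more generally, for any formula $\alpha$ and any evaluation $e$, $e(\alpha) \in F_{i/n}$ forces $e(\neg\alpha) \notin F_{i/n}$. The premise set $\{\alpha, \neg\alpha\}$ is therefore never simultaneously satisfied, so $\alpha, \neg\alpha \vDash_{\mathsf{L}^i_n} \beta$ holds vacuously for every $\alpha$ and $\beta$, and $\mathsf{L}^i_n$ fails to be paraconsistent w.r.t.\ $\neg$.

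There is essentially no deep obstacle here: once the semantic reduction to ``does some $x$ satisfy $x, 1-x \in F_{i/n}$?'' is made, the result is immediate from the elementary equivalence $1 - i/n \geq i/n \Leftrightarrow i/n \leq 1/2$. The only point requiring a little care is confirming that $i/n$ itself is always an admissible witness value, so that the nonempty interval genuinely contains an element of $\domLfin_{n+1}$; this holds automatically since $i/n \in \domLfin_{n+1}$ by construction of the filter.
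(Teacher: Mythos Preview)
Your proof is correct and follows essentially the same route as the paper's: reduce paraconsistency to the existence of some $x \in \domLfin_{n+1}$ with $x \geq i/n$ and $1-x \geq i/n$, rewrite this as $i/n \leq x \leq (n-i)/n$, and observe that such an $x$ exists iff $2i \leq n$. You are simply more explicit than the paper about the choice of witnessing formulas $\alpha = p$, $\beta = q$ and about why $x = i/n$ always serves as a witness in the forward direction.
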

\begin{proof}
$\mathsf{L}^i_n$ is paraconsistent w.r.t.\ $\neg$ iff there exists $x \in \domLfin_{n+1}$ such that $x \geq i/n$ and $\neg x \geq i/n$, iff $i/n \leq x \leq (n-i)/n$ for some  $x \in \domLfin_{n+1}$, iff  $i/n \leq  (n-i)/n$, iff $2i \leq n$.
\end{proof}

Thus, for instance, for $n=5$  it follows that $\mathsf{L}_5^1$ and $\mathsf{L}_5^2$ are paraconsistent, while $\mathsf{L}_5^3$, $\mathsf{L}_5^4$ and $\mathsf{L}_5^5=\L_6$ are  explosive. By its turn, if  $n=3$ then $\mathsf{L}_3^1$  is paraconsistent, while $\mathsf{L}_3^2$ and $\mathsf{L}_3^3=\L_4$ are  explosive. The paraconsistent logics of this form  which are maximal w.r.t. {\sf CPL}  will be analyzed with more detail in Section \ref{sectIdeal}.

Theorem \ref{maxthm} can be  used in order to analyze the maximality of  the logic $\mathsf{L}^i_n$ w.r.t.\ $\mathsf{L}^{i/n}_m$ whenever $m | n$ (taking into account that $\algLfin_{m+1}$ is a subalgebra of $\algLfin_{n+1}$ iff $m|n$). In particular, the maximality of  certain instances of $\mathsf{L}^i_n$ w.r.t.\ {\sf CPL} can be obtained by using Theorem \ref{maxthm}.

The following examples  deal with the algebras $\algLfin_n$ which, as observed above, can define a meet operator $\wedge$ such that, for any order filter $F$, $(a \wedge b) \in F$ iff $a,b \in F$. Because of this, a recovery operator $\circ(p)$ will be considered instead of a recovery set $\bigcirc(p)$, consisting of the conjunction of all of its members.

\begin{example} Let us first consider the case of $\algLfin_4$.
For $1 \leq i \leq 3$ let $\mathsf{L}^i_3=\langle \algLfin_4,F_{i/3}\rangle$. Then $F_{1/3}=\{1/3, 2/3,1\}$, $F_{2/3}=\{2/3,1\}$ and  $F_{3/3}=F_1=\{1\}$.  As in the previous example, it can be proved that  each $\mathsf{L}^i_3$ satisfies the requirements of Theorem~\ref{maxthm} w.r.t.\ {\sf CPL} and so each $\mathsf{L}^i_3$ is maximal w.r.t.\ {\sf CPL}, presented as ${\sf CPL}=\langle {\bf B}_2,\{1\}\rangle$. Indeed, ${\bf B}_2$ is a subalgebra of $\algLfin_4$ and $\top(p)=(p \to p)$ and $\bot(p)=\neg (p \to p)$ are as required.   Finally, the formulas $\alpha^1_2(p)=\alpha^2_1(p)=\neg p$ are such that $e(\alpha^1_2(p))=2/3$ if $e(p)=1/3$, $e(\alpha^2_1(p))=1/3$ if $e(p)=2/3$.  (Observe that there are in $\algLfin_4$ just two `non-classical' values: $a_1=1/3$ and $a_2=2/3$.)

Fix $1 \leq i \leq 3$. Thus, given a theorem $\varphi(p_1,\ldots,p_m)$ of {\sf CPL} which is not valid in $\mathsf{L}^i_3$, consider the formula $\gamma(p_1,p_2)$ as in the proof of Theorem~\ref{maxthm}. Then, the formula $\circ(p)=\gamma(p,\neg p) \wedge \gamma(\neg p,p)$ defines an operator (in terms of a conjunction of instances of $\varphi$) which allows to recover classical logic inside $\mathsf{L}^i_3$. \hfill $\blacksquare$
\end{example}


From Komori's characterization of axiomatic extensions of (infinite-valued) \linebreak {\L}ukasiewicz logic ${\L}_{\infty}$ \cite{Komori:SuperLukasiewiczPropositional}, it directly follows that the logic $\L_{n+1}$ is maximal w.r.t.\ {\sf CPL} iff $n$ is a prime number. By adapting our previous arguments, we can obtain the following extension of  this classical result for logics matrix logics over $\algLfin_{n+1}$ with (almost) arbitrary filters.

\begin{proposition} \label{maxLn} Let  $n\geq 2$ and $\emptyset\neq F \subseteq \domLfin_{n+1}$. Then, the logic $L=\langle\algLfin_{n+1},F\rangle$ is maximal w.r.t.\ {\sf CPL} provided that $0 \notin F$ and $n$ is a prime number.
\end{proposition}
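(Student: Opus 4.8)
The plan is to obtain the statement as an instance of Theorem~\ref{maxthm}, taking $L_1 = L = \langle\algLfin_{n+1},F\rangle$ and $L_2 = \langle{\bf B}_2,\{1\}\rangle$, the usual presentation of {\sf CPL} over $\{\neg,\to\}$. First I would record the easy structural facts: ${\bf B}_2$ (with domain $\{0,1\}$) is a subalgebra of $\algLfin_{n+1}$, since $\{0,1\}$ is closed under $\neg$ and $\to$, and, because $0\notin F$, we have $F\cap\{0,1\}=\{1\}$---here one uses $1\in F$, which is exactly the condition making $L$ a sublogic of {\sf CPL} (and hence making maximality w.r.t.\ {\sf CPL} meaningful; for order filters $F$ it holds automatically). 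With $F_2 := F\cap\{0,1\}=\{1\}$, Lemma~\ref{IncMat} yields ${\vDash_L}\subseteq{\vdash_{\sf CPL}}$, and the two logics are distinct for $n\geq 2$ because $L$ is genuinely $(n{+}1)$-valued (some {\sf CPL}-valid schema or rule fails under a non-classical evaluation). It then remains to verify hypotheses (1)--(3) of Theorem~\ref{maxthm}.

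Conditions (1) and (2) are immediate. In the notation of the theorem the classical values are $0$ and $1$, while the non-classical ones are $a_\ell=\ell/n$ for $1\leq \ell\leq n-1$ (so $k=0$); we have $0\notin F_1=F$, $1\in\{1\}=F_2$, and $\{0,1\}$ is a subalgebra. For (2) one takes $\top(p)=p\to p$ and $\bot(p)=\neg(p\to p)$, whose values under every evaluation are $1$ and $0$ respectively.

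The hard part is condition (3), and this is exactly where primality enters. For each non-classical value $a_i=i/n$ and each $j$ with $1\leq j\leq n-1$ and $j\neq i$, I must produce a unary term $\alpha^i_j(p)$ with $e(\alpha^i_j(p))=j/n$ whenever $e(p)=i/n$; since the value of $\alpha^i_j$ at the remaining inputs is irrelevant, this amounts to requiring that $j/n$ belong to the MV-subalgebra $\langle i/n\rangle$ of $\algLfin_{n+1}$ generated by $i/n$ (the values at $i/n$ of unary terms are precisely the elements of that subalgebra, the definable constants $0=\neg(p\to p)$ and $1=p\to p$ included). The key algebraic fact to invoke is the known description of subalgebras of finite {\L}ukasiewicz chains: $\langle i/n\rangle$ is isomorphic to $\algLfin_{(n/d)+1}$, where $d=\gcd(i,n)$, namely $\langle i/n\rangle=\{0,1/d',\ldots,1\}$ with $d'=n/d$. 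Since $n$ is prime and $1\leq i\leq n-1$, we have $\gcd(i,n)=1$, so $\langle i/n\rangle=\algLfin_{n+1}$ is the whole algebra; in particular every $j/n$ is realized as $t(i/n)$ for a suitable unary term $t$, which we take as $\alpha^i_j$. This establishes (3), and Theorem~\ref{maxthm} then yields that $L$ is maximal w.r.t.\ {\sf CPL}.

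I expect the main obstacle to be precisely this generation step: one must ensure that reaching $j/n$ from $i/n$ is possible using only the (truncated) {\L}ukasiewicz term operations, and that coprimality of $i$ and $n$ is both necessary and sufficient for it. The cleanest route is to quote the structure theorem for subalgebras of finite MV-chains rather than to exhibit the terms $\alpha^i_j$ explicitly, whose concrete form---built from iterated $\oplus$, $\otimes$ and $\neg$---would otherwise require a B\'ezout-type computation modulo $n$.
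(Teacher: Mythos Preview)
Your proposal is correct and follows exactly the route the paper intends: the paper does not spell out a proof of Proposition~\ref{maxLn} but introduces it with ``by adapting our previous arguments,'' meaning an application of Theorem~\ref{maxthm}, and in the proof of the closely related Proposition~\ref{maxLnLm} it invokes precisely the single-generation fact you use (citing \cite[Lemma~1.2]{GpT98}) to obtain the formulas $\alpha^i_j$. Your observation that the argument tacitly requires $1\in F$ is a genuine refinement over the paper's statement; without it $L$ need not be a sublogic of {\sf CPL} and the maximality claim would not even be well-posed.
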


Observe that, as a direct consequence, all the logics $\mathsf{L}^i_q$ with $q$ prime are maximal w.r.t.\ classical logic.

\begin{corollary} \label{maxLqi} Let  $q$ be a prime number, and $1 \leq i \leq q$. Then, the logic $\mathsf{L}^i_q$ is maximal w.r.t.\ {\sf CPL}.
\end{corollary}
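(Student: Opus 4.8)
The plan is to derive Corollary~\ref{maxLqi} as an immediate instance of Proposition~\ref{maxLn}. By definition $\mathsf{L}^i_q = \langle \algLfin_{q+1}, F_{i/q}\rangle$, so this logic already has the shape $\langle \algLfin_{n+1}, F\rangle$ considered in Proposition~\ref{maxLn}, with $n = q$ and $F = F_{i/q}$. It therefore suffices to check the hypotheses of that proposition. Since $q$ is prime we have $q \geq 2$ and the primality requirement holds by assumption; the filter $F_{i/q} = \{x \in \domLfin_{q+1} : x \geq i/q\}$ is nonempty, as it contains $1$. The only point that genuinely uses the constraint $1 \leq i$ is the condition $0 \notin F$: because $i \geq 1$, the least element of $F_{i/q}$ is $i/q \geq 1/q > 0$, whence $0 \notin F_{i/q}$ (and in fact $F_{i/q} \cap \{0,1\} = \{1\}$, matching the designated set of {\sf CPL}). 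With $n = q$ prime, $\emptyset \neq F_{i/q}$ and $0 \notin F_{i/q}$, Proposition~\ref{maxLn} applies verbatim and yields that $\mathsf{L}^i_q$ is maximal w.r.t.\ {\sf CPL}.

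I expect no genuine obstacle at the level of the corollary: once Proposition~\ref{maxLn} is available it is a one-line specialization, exactly as anticipated by the remark preceding the statement, and the whole verification reduces to the elementary fact that $i/q > 0$. All the substance is carried by Proposition~\ref{maxLn} itself, whose proof is where the real difficulty lies; there one must combine the recovery-operator machinery of Theorem~\ref{maxthm} (recovering {\sf CPL} inside $\langle \algLfin_{q+1}, F\rangle$ through instances of a {\sf CPL}-theorem that fails in the larger logic) with the number-theoretic input that, for prime $q$, nothing lies strictly between $\langle \algLfin_{q+1}, F\rangle$ and {\sf CPL}. Accordingly, the only thing I would actually write out for the corollary is the filter computation $0 \notin F_{i/q}$ together with the citation of Proposition~\ref{maxLn}.
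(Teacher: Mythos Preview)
Your proposal is correct and follows exactly the approach intended by the paper: the corollary is stated immediately after Proposition~\ref{maxLn} as a ``direct consequence,'' and your verification that $F_{i/q}$ is a nonempty subset of $\domLfin_{q+1}$ with $0\notin F_{i/q}$ (because $i\geq 1$) is precisely the trivial check needed to invoke that proposition with $n=q$. Your second paragraph's commentary on where the real work lies is accurate but extraneous to the corollary itself.
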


\begin{remark} \label{Lqi-indist}
Note that, for a given prime $q$, if $i < j$ the set of theorems of $\mathsf{L}^j_q$ is strictly included in the set of theorems of $\mathsf{L}^i_q$. However this does not contradict the fact that both logics are maximal w.r.t.\ {\sf CPL}, since their consequence relations are in fact incomparable.  For example, the set of theorems of $\mathsf{L}^2_3$ is included in the set of theorems of $\mathsf{L}^1_3$, but the inclusion is strict: $\models_{\mathsf{L}^1_3} (p \vee \neg p)\otimes (p \vee \neg p)$ while $\not\models_{\mathsf{L}^2_3} (p \vee \neg p)\otimes (p \vee \neg p)$. It suffices to consider an evaluation $e$ such that $e(p) = 1/3$; then $e((p \vee \neg p)\otimes (p \vee \neg p)) = 1/3 \not\geq 2/3$. On the other hand, $\mathsf{L}^2_3$ is not a sublogic of $\mathsf{L}^1_3$: $p \models_{\mathsf{L}^2_3} (p \otimes p) \oplus (p \otimes p)$ but $p \not\models_{\mathsf{L}^1_3} (p \otimes p) \oplus (p \otimes p)$. In order to see this, consider an evaluation $e$ such that $e(p) = 1/3$; then $e((p \otimes p) \oplus (p \otimes p)) = 0$.
\end{remark}

Next examples exploit the fact that $\L_{n+1}$  is a sublogic of $\L_{m+1}$ iff $m$ divides $n$, considering additional filters as designated values, and obtaining maximality in some cases.

\begin{example} Now, the logics asociated to the algebra $\algLfin_5$ will be analyzed.
For $1 \leq i \leq 4$ let
$\mathsf{L}_4^i=\langle \algLfin_5,F_{i/4}\rangle$ such that $F_{1/4}=\{1/4,1/2,3/4,1\}$,  $F_{2/4}=F_{1/2}=\{1/2,3/4,1\}$, $F_{3/4}=\{3/4,1\}$,  and $F_{4/4}=F_1=\{1\}$. Since $2$ divides $4$ then $\algLfin_3$ is a subalgebra of $\algLfin_5$ and $\L_5$ is a sublogic of $\L_3$. We will prove that, indeed, any $\mathsf{L}_4^i$ (for $1 \leq i \leq 4$) is maximal w.r.t.\ $\mathsf{L}_2^{i/4}=\langle \algLfin_3,F_{i/4} \cap \domLfin_{3}\rangle$, by using Theorem~\ref{maxthm}.

By Lemma~\ref{IncMat}, each $\mathsf{L}_4^i$ is a sublogic of  $\mathsf{L}_2^{i/4}$. $\algLfin_3$ is a subalgebra of  $\algLfin_5$ and $\top(p)=(p \to p)$ and $\bot(p)=\neg (p \to p)$ are as required.  Let $a_1=1/2$, $a_2=1/4$ and $a_3=3/4$, and consider the formulas $\alpha^2_1(p)= p \oplus p$, $\alpha^2_3(p)=\alpha^3_2(p)=\neg p$, and  $\alpha^3_1(p)=p \otimes p$.  Finally, let $\alpha^i_i(p)= p$ for $i=2,3$. Then, the formulas $\alpha^i_j$ defined above are such that $e(\alpha^i_j(p))=a_j$ if $e(p)=a_i$, for $i=2,3$ and $j=1,2,3$.

Fix $1 \leq i \leq 4$. Thus, given a theorem $\varphi_i(p_1,\ldots,p_{m_i})$ of $\mathsf{L}_2^{i/4}$ which is not valid in $\mathsf{L}_4^i$, consider the formula $\gamma_i(p_1,p_2,p_3)$ as in the proof of Theorem~\ref{maxthm}. Then, the formula
$$\circ_i(p)=\gamma_i(p\oplus p,p, \neg p) \wedge \gamma_i(p\otimes p,\neg p,p)$$
defines a recovery operator (in terms of a conjunction of instances of $\varphi_i$) which allows to recover $\mathsf{L}_2^{i/4}$ inside $\mathsf{L}_4^i$. This shows that the latter is maximal w.r.t.\ the former.   \hfill $\blacksquare$
\end{example}

\begin{example}
Consider  now the case of $\algLfin_7$. Since 2 and 3 divide 6, it follows that $\algLfin_3$ and $\algLfin_4$ are subalgebras of $\algLfin_7$ and so $L=\langle \algLfin_7,F\rangle$ is a sublogic of both $\langle \algLfin_3,F\cap \domLfin_3 \rangle$ and $\langle\algLfin_4,F\cap \domLfin_4 \rangle$ for any non-trivial filter $F$ of $\algLfin_7$. However, it is not possible to prove the maximality of $L$ by applying Theorem~\ref{maxthm} since, for every formula $\alpha(p)$ and every evaluation $e$ in $\algLfin_7$, $e(\alpha(p))\neq 1/2$ if $e(p) \in \{1/3,2/3\}$ (since $\algLfin_4$ is a subalgebra), while  $e(\alpha(p))\notin \{1/3,2/3\}$ if $e(p)=  1/2$ (since $\algLfin_3$ is a subalgebra).   \hfill $\blacksquare$
\end{example}

As another example of application of Theorem \ref{maxthm}, we can obtain the following maximality condition of a logic $\mathsf{L}^i_n$ with respect to a logic ${\sf L}_{m}^{i/n}$.

\begin{proposition}\label{maxLnLm} Let $1 \leq i,m \leq n$. Then $\mathsf{L}^i_n=\langle \algLfin_{n+1},F_{i/n}\rangle$ is maximal w.r.t.\  $\mathsf{L}_{m}^{i/n}=\langle \algLfin_{m+1}, F_{i/n} \cap \domLfin_{m+1}
\rangle$ if the following condition
 holds:  there is some prime number $q$ and $k \geq 1$ such that $n=q^k$, and  $m=q^{k-1}$.
\end{proposition}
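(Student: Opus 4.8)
The plan is to verify that the pair $L_1=\mathsf{L}^i_n$ and $L_2=\mathsf{L}_m^{i/n}$ satisfies the three hypotheses of Theorem~\ref{maxthm}. First I would record the structural set‑up. Since $n=q^k$ and $m=q^{k-1}$ we have $m\mid n$, so $\algLfin_{m+1}$ is a subalgebra of $\algLfin_{n+1}$; moreover $F_{i/n}\cap\domLfin_{m+1}=F_1\cap A_2$ where $A_1=\domLfin_{n+1}$, $A_2=\domLfin_{m+1}$ and $F_1=F_{i/n}$, so Lemma~\ref{IncMat} applies and the two logics are distinct (as $m<n$). Hypothesis~(1) is then immediate: $0\notin F_{i/n}$ because $i\ge 1$, $1\in F_2$ because $i\le n$, and $\{0,1\}=\domLfin_2$ is a subalgebra of $\algLfin_{m+1}$. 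For hypothesis~(2) I would take $\top(p)=p\to p$ and $\bot(p)=\neg(p\to p)$, which evaluate to $1$ and $0$ under every homomorphism.

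The whole content lies in hypothesis~(3), and here I would first identify the non‑classical values. A value $\ell/n\in\domLfin_{n+1}$ lies in $\domLfin_{m+1}$ iff $(n/m)\mid\ell$, i.e.\ iff $q\mid\ell$; hence the values of $A_1\setminus A_2$ are exactly the $\ell/n$ with $q\nmid\ell$. Since the only prime divisor of $n=q^k$ is $q$, for every such $\ell$ one has $\gcd(\ell,n)=1$. This coprimality is precisely where the hypothesis $n=q^k,\ m=q^{k-1}$ is used: it guarantees that each non‑classical value generates the whole chain $\algLfin_{n+1}$, so that in principle no target is forbidden by subalgebra preservation.

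It then remains, for each non‑classical $\ell/n$ and each intermediate target $c/n$ (with $1\le c\le n-1$), to produce a unary term $\alpha(p)$ with $e(\alpha(p))=c/n$ whenever $e(p)=\ell/n$. I would build all of these from a single \emph{unit spike}. Using $\gcd(\ell,n)=1$ and B\'ezout's identity, choose integers $a_1\ge 1$, $a_2\le -1$ and $b_1,b_2\in\mathbb{Z}$ with $a_1\ell+b_1 n=1=a_2\ell+b_2 n$ (the freedom to replace $(a,b)$ by $(a+tn,\,b-t\ell)$ fixes the signs of the slopes). The integer‑coefficient piecewise‑linear function $g(x)=\max\{0,\min\{a_1 x+b_1,\ a_2 x+b_2\}\}$ is a McNaughton function; on the grid it takes the value $1/n$ at $\ell/n$ and $0$ at every other point of $\domLfin_{n+1}$, since the steep integer slopes force each line below $0$ already one grid step away from $\ell/n$, while $\min\{a_1x+b_1,a_2x+b_2\}\le 1/n$ everywhere. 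By the (easy direction of the) McNaughton representation theorem for finite {\L}ukasiewicz chains \cite{CiMuOt99}, $g$ is definable by an $\algLfin_{n+1}$‑term. Finally I would set $\alpha(p)=\underbrace{g(p)\oplus\cdots\oplus g(p)}_{c}$, whose value at $\ell/n$ is $\min\{1,c/n\}=c/n$. This produces all the formulas $\alpha^i_j$ required in hypothesis~(3), so Theorem~\ref{maxthm} yields the maximality of $\mathsf{L}^i_n$ w.r.t.\ $\mathsf{L}_m^{i/n}$.

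The main obstacle, and the only genuinely non‑routine step, is the construction of the unit spike together with the observation that the prime‑power hypothesis is exactly what makes the non‑classical values coprime to $n$. If $m$ were a coarser subalgebra, say $q^{k-2}$, some non‑classical value $\ell/n$ would have $\gcd(\ell,n)=q>1$; then, by preservation of the subalgebra generated by $\ell/n$, no term could send $\ell/n$ to any value $c/n$ with $q\nmid c$, so hypothesis~(3), and with it this strategy, would fail. This is the same obstruction already seen for $\algLfin_7$ in the preceding example, and it explains why only the single‑step prime‑power condition $n=q^k$, $m=q^{k-1}$ is claimed.
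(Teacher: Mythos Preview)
Your argument is correct and follows the same route as the paper: both verify the hypotheses of Theorem~\ref{maxthm}, with the decisive observation being that every element of $\domLfin_{q^k+1}\setminus\domLfin_{q^{k-1}+1}$ has numerator coprime to $n=q^k$ and hence singly generates $\algLfin_{n+1}$, which yields hypothesis~(3). The only difference is that the paper simply invokes this generation fact by citing \cite[Lemma~1.2]{GpT98}, whereas you supply an explicit McNaughton ``unit spike'' construction of the required terms; note, however, that realizing a given McNaughton function by a term is the \emph{hard} direction of McNaughton's theorem, not the easy one.
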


\begin{proof}
We recall that  $\algLfin_{n+1}$ is singly generated by any element $0<\frac{l}{n}<1$ such that $l$ and $n$ are mutually prime \cite[Lemma 1.2]{GpT98}. Then,   since $q$ is prime, $\domLfin_{q^{k}+1}\smallsetminus \domLfin_{q^{k-1}+1}=\{0<\frac{r}{q^{k}}<1  \ : \ r \mbox{ and } q \mbox{ are mutually prime}\}$ and therefore all conditions of Theorem \ref{maxthm} are satisfied.
\end{proof}

\section{On strong maximality and explosion rules in the logics $\mathsf{L}^i_q$} \label{sectLqi}

Along this section $q$ will denote a prime number.

In the previous section we have seen that  all the logics of the form $\mathsf{L}^i_q=\langle \algLfin_{q+1}, F_{i/q}  \rangle$ are maximal w.r.t. {\sf CPL}.
However, there are maximal logics that  are not maximal w.r.t. {\sf CPL} in  an stronger sense, as firstly considered in \cite{AvronAZ10,ArieliAZ11a} in the context of paraconsistent logics, or in \cite{RibCon12} in a more general context of belief revision techniques for change of logics.

\begin{definition}
Let $L_1$  and $L_2$ two standard propositional logics defined over the same signature $\Theta$ such that $L_1$ is a proper sublogic of $L_2$, i.e.\  such that ${\vdash_{L_1}} \subsetneq {\vdash_{L_2}}$. Then, $L_1$ is said to be {\em  strongly maximal} w.r.t.\ $L_2$ if, for every finitary rule $\varphi_{1}, \ldots , \varphi_{n}/ \psi$  over $\Theta$, if $\varphi_{1}, \ldots , \varphi_{n}\vdash_{L_2} \psi$ but $\varphi_{1}, \ldots , \varphi_{n}\nvdash_{L_1} \psi$, then the logic $L_1^*$  obtained from  $L_1$   by adding $\varphi_{1}, \ldots , \varphi_{n}/ \psi$ as structural rule, coincides with $L_2$.
\end{definition}

By $L_1^*$  above we mean the logic whose consequence relation $\vdash_{L_1^*}$  is the minimal extension of $\vdash_{L_1}$ such that
 $\sigma(\varphi_{1}), \ldots , \sigma(\varphi_{n}) \vdash_{L_1^*} \sigma(\psi)$ for any substitution $\sigma$ over  $\Theta$ (see e.g. \cite{Woj88,ArieliAZ11a}).

For instance, as observed in~\cite[Remark~14]{DO2015}, the logic $\mathcal{BD}^{\sim}$ introduced in Section \ref{recovery}, that is maximal w.r.t.\ {\sf CPL}, it is not strongly maximal relative to {\sf CPL}.
Thus, a natural question is whether a given logic is strongly maximal w.r.t.\ another logic. In particular, in this section, we are interested in studying the status of the logics $\mathsf{L}^i_q=\langle \algLfin_{q+1}, F_{i/q}  \rangle$ with $q$ prime in relation to the notion of strong maximality w.r.t.\ {\sf CPL}. We will show that the answer is negative, as each of them admits a proper extension by a finitary rule related to the explosion law w.r.t.\ {\L}ukasiewicz negation. In fact, in Section \ref{CPL}  it will be shown that such proper extensions are strongly maximal w.r.t.\ {\sf CPL}.

\begin{remark} \label{axiomHqi}
By using the techniques presented in \cite{Con:Est:God}, a sound and complete Hilbert calculus for each $\mathsf{L}^i_q$ (where $i< q$) can be defined from the one for $\L_{q+1}^{\leq}$ (the degree-preserving counterpart of $\L_{q+1}$) by adding additional inference rules. The negative feature of such approach is that these Hilbert calculi have ``global'' inference rules, that is, inference rules such that one of its permises need to be a theorem of $\L_{q+1}$.
By a general result by Blok and Pigozzi (see Theorem 4.3 in \cite{blok:pig:01}) and from Theorem~\ref{equivsys} in Section~\ref{Joan} below, a standard Hilbert calculus for $\mathsf{L}^i_q$ (for $i< q$) can be obtained from the usual one for $\mathsf{L}^q_q=\L_{q+1}$ by means of translations. That is, such calculi have no ``global'' inference rules. The negative side of this approach is that the resulting axiomatization is obtained by translating connectives from the other logic, and so the resulting calculus can appear as very artificial.
As an alternative, it seems that a direct method for defining a sound and complete ``more natural'' Hilbert calculus for each $\mathsf{L}^i_q$ over a suitable signature can also be  obtained by means of a `separation' technique for the truth-values, similar to the one used in Subection~\ref{sectJ4} to define an alternative axiomatization for ${\sf L}^1_3$. To verify this conjecture is left as an open problem.
Anyway, from now on it will be assumed the existence of a standard Hilbert calculus  ${\sf H}_q^i$ which is sound and complete for the logic $\mathsf{L}^i_q$, where $i< q$.  Of course ${\sf H}_q^q$ will stand for the usual axiomatization of $\L_{q+1}$.
\end{remark}

According to the notation introduced at the beginning of Section \ref{Sect-max}, $i\alpha$ is an abbreviation for the formula $\alpha \oplus\dots\oplus \alpha$ ($i$-times), and  the consequence relation of $\mathsf{L}^i_n$ is denoted by $\vDash_{\mathsf{L}^i_n}$. Recall the following basic property of $\algLfin_{n+1}$.

\begin{lemma} \label{exp-i-prop}
For every $1 \leq i \leq n$ and  $x \in \algLfin_{n+1}$: $ix <  i/n$ iff $x=0$. Thus, $e(i \alpha)< i/n$ iff $e(\alpha)=0$ for every evaluation $e$ in  $\algLfin_{n+1}$ and every formula $\alpha$.
\end{lemma}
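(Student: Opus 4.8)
The plan is to reduce both assertions to a single closed-form computation of the iterated strong disjunction. First I would establish, by induction on $i$, that the $\oplus$-sum of $i$ copies of $x$, which we denote $ix$, satisfies $ix = \min\{1,\, i\cdot x\}$, where $i\cdot x$ denotes ordinary real multiplication. The base case $i=1$ is immediate from the definition of $\oplus$. For the inductive step I would write $ix = ((i-1)x)\oplus x = \min\{1,\, (i-1)x + x\}$ and split on whether $(i-1)x$ has already saturated to $1$: if $(i-1)\cdot x \leq 1$ the induction hypothesis gives $(i-1)x = (i-1)\cdot x$ and the sum becomes $\min\{1,\, i\cdot x\}$ directly, while if $(i-1)\cdot x > 1$ then $(i-1)x = 1$, whence $ix = \min\{1, 1 + x\} = 1 = \min\{1,\, i\cdot x\}$ since $i\cdot x > 1$ as well. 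This handles the only real bookkeeping in the lemma, namely the truncation at $1$.

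With this closed form in hand, the equivalence $ix < i/n \Leftrightarrow x = 0$ follows by a short case analysis. For the direction $(\Leftarrow)$, if $x = 0$ then $ix = \min\{1, 0\} = 0$, and since $i \geq 1$ and $n \geq 1$ we have $i/n > 0$, so $ix < i/n$. For the converse I would argue contrapositively: if $x \neq 0$ then $x$ is a nonzero element of the chain $\domLfin_{n+1}$, hence $x \geq 1/n$ (the least nonzero value), so $i\cdot x \geq i/n$. Here the hypothesis $i \leq n$ is essential, as it guarantees $i/n \leq 1$; consequently both $i\cdot x$ and $1$ are $\geq i/n$, and therefore $ix = \min\{1,\, i\cdot x\} \geq i/n$, i.e.\ $ix \not< i/n$. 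This completes the equivalence.

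Finally, the ``Thus'' clause is an immediate transfer of the first statement to evaluations: since $e$ is a homomorphism, $e(i\alpha) = e(\alpha) \oplus \cdots \oplus e(\alpha) = i\,e(\alpha)$ (the $\oplus$-sum of $i$ copies of $e(\alpha)$), so applying the equivalence with $x = e(\alpha) \in \algLfin_{n+1}$ yields $e(i\alpha) < i/n$ iff $e(\alpha) = 0$. I do not expect any genuine obstacle here; the only points demanding care are the truncation case split in the induction and the explicit use of $i \leq n$ to ensure the minimum on the right is not clipped below $i/n$.
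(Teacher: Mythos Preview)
Your proof is correct. The paper actually omits the proof entirely, presenting the lemma as a basic property of $\algLfin_{n+1}$ to be recalled; your argument via the closed form $ix = \min\{1,\, i\cdot x\}$ is the natural elementary verification, and the care you take with the truncation case split and the use of $i \leq n$ is exactly right.
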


From now on, $\bot$ will denote any formula of the form $\neg(p \to p)$, for a propositional variable $p$. Observe that $e(\bot)=0$ for every evaluation in $\algLfin_{n+1}$, every $n \geq 1$ and every propositional variable $p$. This is why the choice of $p$ is inessential for  a concrete construction of $\bot$.

Consider, for $1 \leq i \leq q$, the $i$-explosion law
$$(exp_i) \ \displaystyle \frac{i(\varphi \land \neg\varphi)}{\bot} \ . $$
It is not hard to prove that this rule is not derivable in  any ${\sf H}_q^i$, the sound and complete Hilbert calculus given for the  logic $\mathsf{L}^i_q$ (see Remark \ref{axiomHqi}).

\begin{corollary} \label{exp-i-prop-notder}
For every $1 \leq i \leq q$, the rule $(exp_i)$ is not derivable in ${\sf H}_q^i$.
\end{corollary}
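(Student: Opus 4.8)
The plan is to argue purely semantically and to invoke only the soundness half of the sound-and-complete calculus ${\sf H}_q^i$ assumed in Remark~\ref{axiomHqi}. If the rule $(exp_i)$ were derivable in ${\sf H}_q^i$, then by soundness of ${\sf H}_q^i$ with respect to the matrix $\mathsf{L}^i_q=\langle \algLfin_{q+1},F_{i/q}\rangle$ we would have $i(\varphi \land \neg\varphi) \vDash_{\mathsf{L}^i_q} \bot$ for every formula $\varphi$. Hence it suffices to exhibit a single evaluation $e$ and a single formula $\varphi$ for which the premise is designated while the conclusion is not. Taking $\varphi$ to be a propositional variable $p$ already works, so no completeness argument is needed.

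First I would choose any evaluation $e$ with $e(p)=x$ for some $x \in \domLfin_{q+1}$ satisfying $0<x<1$ (for instance $x=1/q$, which exists since $q\geq 2$). Then $e(p \land \neg p)=\min\{x,1-x\}>0$, so in particular $e(p \land \neg p)\neq 0$. Applying Lemma~\ref{exp-i-prop} with $n=q$ and $\alpha = p \land \neg p$, the hypothesis $e(\alpha)\neq 0$ gives $e(i(p \land \neg p)) \geq i/q$; that is, the premise of $(exp_i)$ receives a value lying in the order filter $F_{i/q}$ and is therefore designated.

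On the other hand, $e(\bot)=0$ for every evaluation in $\algLfin_{q+1}$, and $0 \notin F_{i/q}$ because $i \geq 1$ and $q \geq 2$ force $i/q>0$. Thus the conclusion $\bot$ is not designated under $e$, which shows that $i(p \land \neg p) \not\vDash_{\mathsf{L}^i_q} \bot$. Consequently $(exp_i)$ is not a valid rule of the matrix $\mathsf{L}^i_q$, and by soundness of ${\sf H}_q^i$ it cannot be derivable in ${\sf H}_q^i$.

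There is essentially no obstacle here. The only point requiring mild care is the reduction from ``not derivable'' to ``not semantically valid'', which is exactly the soundness direction and needs no completeness; everything else is the direct application of Lemma~\ref{exp-i-prop}, whose very purpose is to guarantee that combining a nonzero element with itself $i$ times via $\oplus$ keeps its value at or above the threshold $i/q$. The argument is uniform in $i$ and in the prime $q$, so it establishes the statement for all $1 \leq i \leq q$ simultaneously.
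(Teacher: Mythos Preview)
Your proof is correct and follows essentially the same approach as the paper's: pick an evaluation sending $p$ to a value strictly between $0$ and $1$, invoke Lemma~\ref{exp-i-prop} to show the premise of $(exp_i)$ is designated, and observe that the conclusion is not. The only difference is cosmetic: the paper uses a fresh variable $p'$ with $e(p')=0$ in place of $\bot$ and accordingly cites both soundness and completeness, whereas you work with $\bot$ directly and need only soundness---a minor streamlining.
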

\begin{proof}
We first observe that  if  $p, p'$ are two different  propositional variables, then $i(p \wedge \neg p) \not\vDash_{\mathsf{L}^i_n} p'$ for $1 \leq i \leq n$. Indeed, let $e$ be an evaluation in  $\algLfin_{n+1}$ such that $e(p)\notin\{0,1\}$ and $e(p')=0$. Since  $e(p \wedge \neg p)\neq 0$ then  $e(i (p \wedge \neg p))\geq i/n$, by Lemma \ref{exp-i-prop}. Hence, $i(p \wedge \neg p) \not\vDash_{\mathsf{L}^i_n} p'$. Finally, the corollary then follows  from soundness and completeness of ${\sf H}_q^i$ w.r.t.\ $\mathsf{L}^i_q$.
\end{proof}

However, the $i$-explosion rule is clearly admissible in ${\sf H}^i_q$ since it is a {\em passive} rule, that is: for no instance of the $(exp_i)$ rule, the premisse can be a theorem of ${\sf H}^i_q$. Indeed, for any classical evaluation over $\{0,1\}$ it is the case that $e(\varphi) \in \{0,1\}$ for every formula $\varphi$ and so $e(i (\varphi \wedge \neg \varphi)) < i/n$, by Lemma \ref{exp-i-prop}. This leads us to consider the following definition.

\begin{definition} \label{Hbar}
 $\overline{\sf H}^i_q$ is  the Hilbert calculus obtained from ${\sf H}^i_q$ by adding the $i$-explosion rule $(exp_i)$.  We will denote by $\vdash_{{\sf H}^i_q}$ and $\vdash_{\overline{\sf H}^i_q}$ the notions of proof associated to the Hilbert calculi ${\sf H}^i_q$ and  $\overline{\sf H}^i_q$, respectively.
\end{definition}

The following is a characterization of $\vdash_{\overline{\sf H}^i_q}$ in terms of $\vdash_{{\sf H}^i_q}$.

\begin{proposition} \label{proof-Hqi}
Let $\Gamma \cup\{\varphi\}$ be a set of formulas. Then $\Gamma \vdash_{\overline{\sf H}^i_q} \varphi$ iff either $\Gamma \vdash_{{\sf H}^i_q} \varphi$, or  $\Gamma\vdash_{{\sf H}^i_q} i (\psi \wedge \neg \psi)$ for some formula $\psi$.
\end{proposition}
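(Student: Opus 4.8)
The plan is to prove both directions of the characterization, treating $\overline{\sf H}^i_q$ as the calculus $\mathsf{H}^i_q$ augmented by the single passive rule $(exp_i)$. The right-to-left direction is the easy one: if $\Gamma \vdash_{{\sf H}^i_q} \varphi$ then of course $\Gamma \vdash_{\overline{\sf H}^i_q} \varphi$, since $\overline{\sf H}^i_q$ extends $\mathsf{H}^i_q$; and if $\Gamma \vdash_{{\sf H}^i_q} i(\psi \land \neg\psi)$ for some $\psi$, then combining this derivation with one application of $(exp_i)$ yields $\Gamma \vdash_{\overline{\sf H}^i_q} \bot$, and since $\bot = \neg(p \to p)$ satisfies $e(\bot)=0$ for every evaluation, $\bot$ explodes in the underlying logic; more precisely $\bot \vdash_{{\sf H}^i_q} \varphi$ for every $\varphi$ (as $\vDash_{\mathsf{L}^i_q} $ makes $\bot$ a bottom, so anything follows from it), hence $\Gamma \vdash_{\overline{\sf H}^i_q} \varphi$.

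The substantive direction is left-to-right. Here I would argue by induction on the length of a derivation of $\varphi$ from $\Gamma$ in $\overline{\sf H}^i_q$, tracking how many times the new rule $(exp_i)$ is used. The key observation is that $(exp_i)$ is a \emph{passive} rule whose conclusion is the fixed formula $\bot$: once any instance of $(exp_i)$ fires, we have derived $\bot$, and from $\bot$ everything follows already in $\mathsf{H}^i_q$. So I would split on whether the derivation of $\varphi$ uses $(exp_i)$ at all. If it does not, then it is a derivation in $\mathsf{H}^i_q$ and we are in the first case, $\Gamma \vdash_{{\sf H}^i_q}\varphi$. If it does, consider the \emph{first} application of $(exp_i)$ in the derivation. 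Its premise is a formula of the form $i(\psi \land \neg\psi)$ for some $\psi$, and everything used to derive that premise comes before the first use of $(exp_i)$, hence belongs to a genuine $\mathsf{H}^i_q$-derivation. This gives exactly $\Gamma \vdash_{{\sf H}^i_q} i(\psi \land \neg\psi)$, placing us in the second disjunct.

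The main obstacle, and the point requiring care, is substitution: the rule $(exp_i)$ is added as a \emph{structural} (schematic) rule, so in a derivation it may be applied to arbitrary instances $i(\sigma(\varphi)\land\neg\sigma(\varphi))$, and likewise the axioms and rules of $\mathsf{H}^i_q$ are schematic. I must ensure that the notion of derivation being used is the standard one in which premises feeding a later step are earlier lines of the same derivation, so that ``everything before the first use of $(exp_i)$ is an $\mathsf{H}^i_q$-derivation'' is literally true. I would make this precise by defining the length of the derivation and noting that the sub-derivation of the premise of the first $(exp_i)$-application uses only axioms of $\mathsf{H}^i_q$, members of $\Gamma$, and the inference rules of $\mathsf{H}^i_q$ (not $(exp_i)$, by minimality of ``first''), so it is a bona fide $\mathsf{H}^i_q$-proof of some $i(\psi\land\neg\psi)$. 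A secondary subtlety is to confirm that $\bot$ is explosive in $\mathsf{H}^i_q$, i.e.\ $\bot \vdash_{{\sf H}^i_q}\varphi$ for all $\varphi$; this holds semantically because $e(\bot)=0 < i/q$ for every evaluation, so the premise set $\{\bot\}$ is never satisfied and the consequence $\vDash_{\mathsf{L}^i_q}$ is vacuously everything, and then soundness and completeness of $\mathsf{H}^i_q$ transfer this to $\vdash_{{\sf H}^i_q}$. With these two points established, both cases of the right-hand disjunction are reached and the equivalence follows.
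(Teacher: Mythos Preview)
Your proposal is correct and follows essentially the same approach as the paper: for the forward direction you case-split on whether $(exp_i)$ is used at all and, if so, locate its \emph{first} application to extract an $\mathsf{H}^i_q$-derivation of the premise $i(\psi\land\neg\psi)$; for the backward direction you apply $(exp_i)$ to reach $\bot$ and then invoke completeness of $\mathsf{H}^i_q$ to get $\bot\vdash_{{\sf H}^i_q}\varphi$. The only cosmetic difference is that you announce an ``induction on length'' but never actually use an inductive hypothesis---the first-application argument you give is exactly the paper's direct argument, so you can drop the induction framing.
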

\begin{proof}
`Only if' part: Suppose that $\Gamma \vdash_{\overline{\sf H}^i_q} \varphi$ such that $\Gamma \nvdash_{{\sf H}^i_q} \varphi$. Then, any derivation in  $\overline{\sf H}^i_q$ of $\varphi$ from $\Gamma$ must use the rule $(exp_i)$. Let $\varphi_1 \ldots \varphi_n$ be a derivation  in  $\overline{\sf H}^i_q$ of $\varphi$ from $\Gamma$. Thus, there exists $1 \leq m < n$ such that $\varphi_m=i (\psi \wedge \neg \psi)$  for some formula $\psi$,  allowing so the first application of $(exp_i)$ in the given derivation. This means that  $\Gamma\vdash_{{\sf H}^i_q} i (\psi \wedge \neg \psi)$, since it was assumed that $(exp_i)$ was not applied before $\varphi_m$ in the given derivation.

`If' part: Suppose that   $\Gamma \vdash_{{\sf H}^i_q} \varphi$. Then, clearly $\Gamma \vdash_{\overline{\sf H}^i_q} \varphi$.
Now, suppose that
$\Gamma\vdash_{{\sf H}^i_q} i (\psi \wedge \neg \psi)$ for some formula $\psi$. Then $\Gamma \vdash_{\overline{\sf H}^i_q} \bot$, by using $(exp_i)$. But $\bot \vDash_{\mathsf{L}^i_q} \varphi$ and so $\bot \vdash_{{\sf H}^i_q} \varphi$, by completeness of ${\sf H}^i_q$ w.r.t.\ $\mathsf{L}^i_q$. This means that $\Gamma \vdash_{\overline{\sf H}^i_q} \varphi$.
\end{proof}

The following question is how to characterize semantically the logic $\overline{\sf H}^i_q$ with respect to  $\mathsf{L}^i_q$, the original sematics for  ${\sf H}^i_q$. The answer  will be obtained in the next section by algebraic arguments (Theorem \ref{final-simple} and Remark \ref{rem-simple}). Indeed, it will be shown there that $\overline{\sf H}^i_q$ is sound and complete w.r.t.\ $\bar{\mathsf L}^i_q$ where, for every $i$ and $n$ with $1 \leq i \leq n$,
$$\bar{\mathsf L}^i_n=\langle \algLfin_{n+1} \times \algLfin_{2}, F_{i/n}\times\{1\}  \rangle$$
such that $\algLfin_{2}$ is the  two-element Boolean algebra   ${\bf B}_2$ with domain $\{0,1\}$.

\section{Translations, equivalent logics and strong maximality} \label{Joan}

\subsection{Preliminaries}

Blok and Pigozzi introduce the notion of equivalent deductive systems in \cite{blok:pig:01} (see also \cite{Blok-Pigozzi:DeductionTheorems}). Two propositional deductive systems $S_{1}$ and $S_{2}$ in the same language $\mathcal{L}$ are equivalent iff there are two translations $\tau_{1}, \tau_{2}$ (finite subsets of $\mathcal{L}$-propositional formulas in one variable) such that:
 \begin{itemize}
 \item $\Gamma\vdash_{S_{1}}\varphi$ iff $\tau_{1}(\Gamma)\vdash_{S_{2}}\tau_{1}(\varphi)$,
 \item $\Delta\vdash_{S_{2}}\psi$ iff $\tau_{2}(\Delta)\vdash_{S_{1}}\tau_{2}(\psi)$,
 \item $\varphi\dashv\vdash_{S_{1}}\tau_{2}(\tau_{1}(\varphi))$,
 \item $\psi\dashv\vdash_{S_{2}}\tau_{1}(\tau_{2}(\psi))$.
\end{itemize}
From very general  results stated in \cite{blok:pig:01} it follows that two equivalent logic systems   are indistinguishable from the point of view of algebra, provided that one of them is algebraizable. Indeed, in such case if one of the systems is algebraizable  then the other will be also algebraizable w.r.t. the same quasivariety. By applying this fact to the systems of the form $\mathsf{L}^{i}_{n}$ studied in the previous sections, several results on relative maximality between these systems and classical logic will be obtained in the next Subsection \ref{CPL}. Actually, these results will be generalized in Subsection \ref{general} to obtain relative maximality results among the systems $\mathsf{L}^{i}_{n}$.   However, for the sake of self containment, we prefer to leave the results of Subsection \ref{CPL} with their simpler proofs as well.

In the rest of this subsection, we provide the necessary preliminaries that will be needed in the subsequent subsections.

We recall that $\L_{\infty}$ is algebraizable and the class $MV$ of all MV-algebras is its equivalent quasivariety semantics \cite{RTV90,CiMuOt99}. Since algebraizability is preserved by finitary extensions then every finite valued $\L$ukasiewicz logic is also algebraizable.

Now we can prove that the deductive systems $\mathsf{L}^{i}_{n}$ and $\mathsf{L}^{j}_{n}$ are in fact equivalent in the above sense. First, observe that, by the McNaughton  functional representation theorem \cite{McNaughton:FunctionalRep}, for every $n\geq2$ and every $1\leq m\leq n$ there is an MV-term $\lambda_{m,n}(p)$ such that for every $a\in [0,1]$,
$$\lambda_{m,n}(a)=\left\{
                    \begin{array}{ll}
                      0, & \hbox{if $a\leq \frac{m-1}{n}$;} \\
                      na-(m-1), & \hbox{if $\frac{m-1}{n}<a<\frac{m}{n}$;} \\
                      1, & \hbox{if $\frac{m}{n}\leq a$.}
                    \end{array}
                  \right.
$$

\begin{lemma}  \label{negFi} The restrictions of the $\lambda_{i,m}$ and $\lambda_{n,n}$ functions on ${\bf LV}_{n+1}$ are the characteristic functions of the order filters $F_{i/n}$ and $F_1$ respectively, i.e.\ for each $a \in {\bf LV}_{n+1}$,
$$\lambda_{i,n}(a)=\left\{
                    \begin{array}{ll}
                      0, & \hbox{if $a < \frac{i}{n}$} \\
                      1, & \hbox{if $ a \geq \frac{i}{n}$}
                    \end{array}
                  \right.
              \qquad
 \lambda_{n,n}(a)= a^n = \left\{
                    \begin{array}{ll}
                      0, & \hbox{if $a < 1$} \\
                      1, & \hbox{if $ a = 1$}
                    \end{array}
                  \right.
$$

\end{lemma}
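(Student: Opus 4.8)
The plan is to verify the claimed explicit description of $\lambda_{i,n}$ and $\lambda_{n,n}$ on the finite chain ${\bf LV}_{n+1}$ by simply restricting the general McNaughton term $\lambda_{m,n}(p)$ (already introduced just above the statement) to the set of values $a \in \{0,\frac1n,\dots,\frac{n-1}{n},1\}$. The key observation is that for these discrete values, the ``middle'' case of the three-part definition of $\lambda_{m,n}$—the open interval $\frac{m-1}{n} < a < \frac{m}{n}$—is never attained, because there is no element of ${\bf LV}_{n+1}$ strictly between two consecutive values $\frac{m-1}{n}$ and $\frac{m}{n}$. Hence on the finite chain only the two extreme cases survive, which is exactly what turns $\lambda_{m,n}$ into a characteristic function.

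Concretely, first I would fix $a \in {\bf LV}_{n+1}$, so $a = \frac{k}{n}$ for some integer $0 \le k \le n$, and plug into the definition with $m=i$. If $a \le \frac{i-1}{n}$, i.e.\ $k \le i-1$, i.e.\ $a < \frac{i}{n}$, the first clause gives $\lambda_{i,n}(a)=0$. If $a \ge \frac{i}{n}$, i.e.\ $k \ge i$, the third clause gives $\lambda_{i,n}(a)=1$. Since every $a=\frac{k}{n}$ satisfies either $k \le i-1$ or $k \ge i$, these two clauses exhaust all cases on the chain, and the disjointness of ``$a<\frac{i}{n}$'' and ``$a\ge\frac{i}{n}$'' shows the two formulas in the statement agree with the claimed characteristic function of $F_{i/n}=\{x : x \ge i/n\}$. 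For the second identity I would specialize to $m=n$: here $\frac{m-1}{n}=\frac{n-1}{n}$ and $\frac{m}{n}=1$, so $\lambda_{n,n}(a)=0$ for $a \le \frac{n-1}{n}$ (that is, $a<1$) and $\lambda_{n,n}(a)=1$ for $a \ge 1$ (that is, $a=1$), giving precisely the characteristic function of $F_1=\{1\}$.

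It remains to justify the extra equation $\lambda_{n,n}(a)=a^n$ appearing in the statement. I would argue this directly from the MV-operations: recalling that $x^n$ abbreviates the $n$-fold strong conjunction $x \otimes \cdots \otimes x$ with $x\otimes y=\max\{0,x+y-1\}$, an easy induction gives $a^n=\max\{0,na-(n-1)\}$ on ${\bf LV}_{n+1}$. For $a=\frac{k}{n}$ with $k<n$ we have $na-(n-1)=k-(n-1)\le 0$, so $a^n=0$, whereas for $a=1$ we get $a^n=1$; thus $a^n$ coincides with $\lambda_{n,n}(a)$ on the chain, confirming the displayed identity.

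There is essentially no serious obstacle here: the whole content is the elementary remark that the interpolating middle piece of the McNaughton function is vacuous on a discrete grid of step $\frac1n$. The only point requiring a little care is bookkeeping the boundary inequalities—making sure that the endpoints $a=\frac{m-1}{n}$ and $a=\frac{m}{n}$ are assigned to the correct clause so that the finite-valued description has strict inequality ``$a<\frac{i}{n}$'' for the value $0$ and non-strict ``$a\ge\frac{i}{n}$'' for the value $1$, matching the order filter $F_{i/n}$ exactly. Once those inequalities are tracked, the lemma follows immediately.
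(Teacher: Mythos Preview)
Your proof is correct. The paper actually states this lemma without proof, treating it as an immediate consequence of the three-piece definition of $\lambda_{m,n}$ given just before the statement; your explicit verification that the open middle interval $\big(\tfrac{m-1}{n},\tfrac{m}{n}\big)$ contains no point of $\domLfin_{n+1}$, together with the short computation of $a^n$, is exactly the routine check the paper leaves to the reader.
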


\begin{theorem} \label{equivsys}
For every $n\geq 2$ and every $1\leq i, j\leq n$, $\mathsf{L}^{i}_{n}$ and $\mathsf{L}^{j}_{n}$ are equivalent deductive systems.
\end{theorem}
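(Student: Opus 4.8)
The plan is to exhibit, for every prime-free $n\geq 2$ and every pair $i,j$, a pair of translations $\tau_1,\tau_2$ (each a finite set of one-variable MV-terms) witnessing that $\mathsf{L}^i_n$ and $\mathsf{L}^j_n$ are equivalent in the sense of Blok and Pigozzi. The key observation driving the construction is Lemma~\ref{negFi}: the term $\lambda_{i,n}(p)$ restricted to $\algLfin_{n+1}$ is precisely the characteristic function of the order filter $F_{i/n}$, taking value $1$ exactly when $p\in F_{i/n}$ and $0$ otherwise. Thus $\lambda_{i,n}$ \emph{translates membership in $F_{i/n}$ into membership in $F_1=\{1\}$}, and symmetrically $\lambda_{j,n}$ does the same for $F_{j/n}$.

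First I would set $\tau_1(p)=\{\lambda_{i,n}(p)\}$ and $\tau_2(p)=\{\lambda_{j,n}(p)\}$ (or, to be safe with the final round-trip conditions, I would be prepared to take slightly richer finite sets). The first two equivalence clauses then reduce to a direct semantic check. For clause one, I want $\Gamma\vDash_{\mathsf{L}^i_n}\varphi$ iff $\lambda_{i,n}[\Gamma]\vDash_{\mathsf{L}^j_n}\lambda_{i,n}(\varphi)$; but for any evaluation $e$ into $\algLfin_{n+1}$, Lemma~\ref{negFi} gives $e(\lambda_{i,n}(\alpha))\in F_{j/n}$ iff $e(\lambda_{i,n}(\alpha))=1$ iff $e(\alpha)\in F_{i/n}$, so designation is preserved on the nose and the biconditional is immediate. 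The second clause is symmetric, swapping the roles of $i$ and $j$.

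The main obstacle, and the step I would spend the most care on, is verifying the two round-trip conditions $\varphi\dashv\vDash_{\mathsf{L}^i_n}\tau_2(\tau_1(\varphi))$ and $\psi\dashv\vDash_{\mathsf{L}^j_n}\tau_1(\tau_2(\psi))$, since the naive composition $\lambda_{j,n}\circ\lambda_{i,n}$ collapses every value to $\{0,1\}$ and so loses information about $\varphi$. Here I would exploit the fact that the systems $\mathsf{L}^i_n$ are algebraizable over $MV_n$, so interderivability $\dashv\vDash_{\mathsf{L}^i_n}$ is governed by the associated (congruence-generating) equivalence terms rather than by mere truth-preservation; the round-trip conditions should be checked as identities modulo the congruence of the matrix, using the standard MV-term machinery together with Lemma~\ref{negFi}. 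Concretely I would choose $\tau_1,\tau_2$ so that $\lambda_{j,n}(\lambda_{i,n}(p))$ and $p$ generate the same congruence, invoking the McNaughton representation to realize the required retraction by an explicit MV-term. Since algebraizability is preserved under the translations and $\L_{n+1}$ is algebraizable with quasivariety $MV_n$, establishing these four clauses yields that $\mathsf{L}^i_n$ and $\mathsf{L}^j_n$ are equivalent deductive systems, completing the proof.
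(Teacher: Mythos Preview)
Your choice of translations $\tau_1=\{\lambda_{i,n}(p)\}$ and $\tau_2=\{\lambda_{j,n}(p)\}$ is exactly right, and your verification of the first two clauses is correct. The paper does essentially the same thing, except that it reduces to the case $j=n$ (i.e.\ it shows each $\mathsf{L}^i_n$ is equivalent to $\L_{n+1}$ via $\tau=\{\lambda_{i,n}(p)\}$ and $\sigma=\{\lambda_{n,n}(p)\}$) and then relies on transitivity of equivalence.

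Where your proposal goes wrong is in the treatment of the round-trip conditions. You describe the collapse of $\lambda_{j,n}\circ\lambda_{i,n}$ to $\{0,1\}$ as an ``obstacle'' and then propose to repair it by invoking algebraizability and congruence-generating equivalence terms. Both moves are mistaken. First, the condition $\varphi\dashv\vDash_{\mathsf{L}^i_n}\tau_2(\tau_1(\varphi))$ does \emph{not} ask that $\varphi$ and $\tau_2(\tau_1(\varphi))$ take the same value, only that for every evaluation $e$ one has $e(\varphi)\in F_{i/n}$ iff $e(\tau_2(\tau_1(\varphi)))\in F_{i/n}$. Since $\lambda_{i,n}(a)\in\{0,1\}$ on $\algLfin_{n+1}$ and $\lambda_{j,n}$ fixes $0$ and $1$, we have $\lambda_{j,n}(\lambda_{i,n}(a))=\lambda_{i,n}(a)$, and hence $e(\tau_2(\tau_1(\varphi)))\in F_{i/n}$ iff $\lambda_{i,n}(e(\varphi))=1$ iff $e(\varphi)\in F_{i/n}$, by exactly the computation you already carried out for the first clause. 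The other round-trip is symmetric. So the ``obstacle'' is illusory and the round-trip conditions are just as immediate as the first two.

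Second, your proposed fix is circular: you appeal to the algebraizability of $\mathsf{L}^i_n$ over $MV_n$, but in the paper that fact is \emph{derived from} Theorem~\ref{equivsys} (together with the known algebraizability of $\L_{n+1}$), not available prior to it. The talk of choosing $\tau_1,\tau_2$ so that $\lambda_{j,n}(\lambda_{i,n}(p))$ and $p$ ``generate the same congruence'' is therefore not a legitimate step here and, as noted, is also unnecessary. (Minor point: the phrase ``prime-free $n$'' in your opening sentence is spurious; the theorem has no such restriction.)
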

\begin{proof}
It is enough to prove that for every $1\leq i\leq n-1$, $\mathsf{L}^{i}_{n}$ is equivalent to $\mathsf{L}^{n}_{n}=\L_{n+1}$.
Let the translations $\tau$ and $\sigma$  be given by $\tau=\{\lambda_{i,n}(p)\}$ and $\sigma=\{\lambda_{n,n}(p)\}$. It is easy to check that for every set of formulas $\Gamma\cup\{\varphi\}$,
$$\Gamma \vDash_{\mathsf{L}^{i}_{n}}\varphi \mbox{ iff  } \{\tau(\psi) \ : \ \psi\in\Gamma\}\vDash_{\mathsf{L}^{n}_{n}}\tau(\varphi)$$
$$\Gamma \vDash_{\mathsf{L}^{n}_{n}}\varphi \mbox{ iff  } \{\sigma(\psi) \ : \ \psi\in\Gamma\}\vDash_{\mathsf{L}^{i}_{n}}\sigma(\varphi)$$
$$\varphi\Dashv\vDash_{\mathsf{L}^{i}_{n}}\sigma(\tau(\varphi)) \mbox{ and } \varphi\Dashv\vDash_{\mathsf{L}^{n}_{n}}\tau(\sigma(\varphi)).$$
Thus, $\mathsf{L}^{i}_{n}$ and $\mathsf{L}^{n}_{n}=\L_{n+1}$ are equivalent deductive systems.
\end{proof}

From the equivalence among $\mathsf{L}^{i}_{n}$ and $\L_{n+1}$, we can obtain, by translating the axiomatization of the finite valued $\L$ukasiewicz logic \L$_{n+1}$,  a calculus sound and complete with respect $\mathsf{L}^{i}_{n}$ that we denote by ${\sf H}^{i}_{n}$ (see \cite[Theorem 4.3]{blok:pig:01}).

Since $\L_{\infty}$ is algebraizable and the class $MV$ of all MV-algebras is its equivalent quasivariety semantics, finitary extensions of $\L_{\infty}$ are in $1$ to $1$ correspondence  with quasivarieties of MV-algebras . Actually, there is a dual isomorphism from the lattice of all finitary extensions of $\L_{\infty}$ and the lattice of all quasivarieties of $MV$. Moreover, if we restrict this correspondence to varieties of MV we get the dual isomorphism from the lattice of all varieties of MV and the lattice of all axiomatic extensions of $\L_{\infty}$. Since $\L_{n+1}=\mathsf{L}^{n}_{n}$ is an axiomatic extension of $\L_{\infty}$,  $\L_{n+1}$  is an algebraizable logic with the class $MV_{n} = \mathcal{Q}({\bf {\L}V}_{n+1})$, the quasivariety generated by ${\bf {\L}V}_{n+1}$, as its equivalent variety semantics. It follows from the previous theorem and from~\cite{blok:pig:01} that $\mathsf{L}^{i}_{n}$, for every $1\leq i\leq n$, is also algebraizable with the same class of $MV_{n}$-algebras as its equivalent variety semantics. Thus, the  lattices of all finitary extensions of $\mathsf{L}^{i}_{n}$ are isomorphic, and in fact, dually isomorphic to the lattice of all subquasivarieties of $MV_{n}$, for all $0<i<n$.

Therefore maximality conditions in the lattice of finitary (axiomatic) extensions  correspond to minimality conditions in the lattice of subquasivarieties (subvarieties). Thus, given two finitary extensions $L_{1}$ and $L_{2}$ of a given logic $\mathsf{L}^{i}_{n}$, where $K_{L_{1}}$ and $K_{L_{2}}$ are its associated $MV_{n}$-quasivarieties, $L_{1}$ is strongly maximal with respect $L_{2}$ iff $K_{L_{1}}$ is a minimal subquasivariety of $MV_{n}$ among those $MV_{n}$-quasivarieties properly containing $K_{L_{2}}$. Moreover, if $L_{1}$ and $L_{2}$ are axiomatic extensions of $\mathsf{L}^{i}_{n}$, then $K_{L_{1}}$ and $K_{L_{2}}$ are indeed $MV_{n}$-varieties. In that case,  $L_{1}$ is  maximal with respect $L_{2}$ iff $K_{L_{1}}$ is a minimal subvariety of $MV_{n}$ among those $MV_{n}$-varieties properly containing $K_{L_{2}}$.

All the axiomatic extensions of $\L_{\infty}$ are characterized by Komori in \cite{Komori:SuperLukasiewiczPropositional}, where it is shown that every axiomatic extension  is finitely axiomatizable and depends only on two finite sets of natural numbers $I, J$ not both empty. Moreover, Panti proved in \cite{Panti:Varieties} that every axiomatic extension can be axiomatized relative to  $\L_{\infty}$ by a single axiom $\gamma_{I,J}$ with a single propositional variable. For the case of finite valued $\L$ukasiewicz logics, Komori's characterization depends on just a finite set of natural numbers in the following sense: given $n>1$, every axiomatic extension of $\L_{n+1}$ is of the form
$$\displaystyle \bigcap_{1\leq j\leq k}\L_{m_{j}+1}$$
for some natural number $k$ where $m_{j}|n$ for every $1\leq j\leq k$. Moreover, from the equivalence of Theorem \ref{equivsys}, it follows that every axiomatic extension of $\mathsf{L}^{i}_{n}$
is of the form
$$\displaystyle \bigcap_{1\leq j\leq k}\mathsf{L}^{i/n}_{m_{j}}$$
for some natural number $k$ where $m_{j}|n$ for every $1\leq j\leq k$,  and it is axiomatized by a single axiom $\gamma^{i/n}_{m_{1},\ldots,m_{k}}$ which depends on one variable.
We denote by ${\sf H}^{i/n}_{m_{1},\ldots,m_{k}}$ the calculus obtained from ${\sf H}^{i}_n$
by adding the axiom $\gamma^{i/n}_{m_{1},\ldots,m_{k}}$. Note that for every $m\geq 1$ such that $m\vert n$, the calculus  ${\sf H}^{i/n}_{m}$ is the same logic as ${\sf H}^{j}_{m}$, where $j$ is the natural number  such that $F_{j/m}=F_{i/n}\cap {\L}V_{m}$.

The lattice of all axiomatic extensions $\L_{\infty}$ is fully described also by Komori in \cite{Komori:SuperLukasiewiczPropositional}, thus from the equivalence of Theorem \ref{equivsys}, we can obtain the following maximality conditions for all axiomatic extensions of ${\sf L}^{i}_{n}$.

\begin{theorem}
Let  $0<i,m\leq  n$ be natural numbers such that $m\vert n$. If $L$ is an axiomatic extension of $\mathsf{L}^i_n$,  then
\begin{itemize}

  \item $L$ is maximal with respect to $\mathsf{L}^{i/n}_{m}$ iff $L=\mathsf{L}^{i/n}_{m}\cap \mathsf{L}^{i/n}_{q^{k+1}}$ for some prime number $q$ with $q | n$ and a natural $k \geq 0$ such that $q^{k}| m$ and $q^{k+1} \nvert m$.

\end{itemize}
\end{theorem}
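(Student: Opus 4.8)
The plan is to transport the statement entirely into the lattice of subvarieties of $MV_n$ and then solve a purely combinatorial covering problem on the divisors of $n$. By Theorem~\ref{equivsys} together with the Blok--Pigozzi correspondence, $\mathsf{L}^i_n$ is algebraizable with equivalent variety semantics $MV_n=\mathcal{Q}(\algLfin_{n+1})$, and its lattice of axiomatic extensions is dually isomorphic to the lattice of subvarieties of $MV_n$; maximality of a logic corresponds to minimality (a covering relation) of the associated variety. Using Komori's description, every axiomatic extension $L$ of $\mathsf{L}^i_n$ has the form $\bigcap_{1\le j\le k}\mathsf{L}^{i/n}_{m_j}$ with each $m_j\mid n$, and I would encode it by the set $D_L=\{d\mid n : d\mid m_j \text{ for some }j\}$ of divisors of $n$. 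This $D_L$ is downward closed under divisibility, and the assignment $L\mapsto D_L$ is an order-reversing bijection between the axiomatic extensions of $\mathsf{L}^i_n$ (ordered by inclusion of consequence relations) and the lattice of divisor-closed subsets of $\mathrm{Div}(n)$: a smaller logic corresponds to a larger set. In particular $\mathsf{L}^{i/n}_m$ corresponds to $\mathrm{Div}(m)$, and, since the logic $\mathsf{L}^{i/n}_{m}\cap\mathsf{L}^{i/n}_{m'}$ is the one determined by $\mathcal{Q}(\algLfin_{m+1},\algLfin_{m'+1})$, intersection of logics corresponds to union of divisor sets.

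Under this dictionary the first step is to rephrase the goal. Because $L$ is a sublogic of $\mathsf{L}^{i/n}_m$ exactly when $D_L\supseteq\mathrm{Div}(m)$, the logic $L$ is maximal w.r.t.\ $\mathsf{L}^{i/n}_m$ if and only if $D_L$ is a \emph{minimal} divisor-closed subset of $\mathrm{Div}(n)$ properly containing $\mathrm{Div}(m)$, i.e.\ $D_L$ covers $\mathrm{Div}(m)$ in the lattice of order ideals of $(\mathrm{Div}(n),\mid)$. A standard fact about lattices of down-sets then gives that $D_L$ covers $\mathrm{Div}(m)$ iff $D_L=\mathrm{Div}(m)\cup\{d\}$ for some $d$ minimal in $\mathrm{Div}(n)\setminus\mathrm{Div}(m)$; equivalently $d\mid n$, $d\nvert m$, and every proper divisor of $d$ divides $m$ (the latter divisors all lie in $\mathrm{Div}(n)$, since $d\mid n$).

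The key arithmetic step is to identify these minimal elements $d$, and here I would argue that such a $d$ must be a prime power. Indeed, if $d$ had two distinct prime divisors $q\neq r$, then $d/q$ and $d/r$ are proper divisors of $d$, hence both divide $m$, so $\mathrm{lcm}(d/q,d/r)$ divides $m$; but $\mathrm{lcm}(d/q,d/r)=d$, contradicting $d\nvert m$. Writing $d=q^{k+1}$, the requirement that all proper divisors of $d$ divide $m$ becomes $q^{k}\mid m$, together with $q^{k+1}\nvert m$ and $q^{k+1}\mid n$ (so in particular $q\mid n$); conversely each such $d$ yields a genuine cover. Finally, since $q^{k}\mid m$ we have $\{1,q,\dots,q^{k}\}=\mathrm{Div}(q^{k})\subseteq\mathrm{Div}(m)$, whence $\mathrm{Div}(m)\cup\{q^{k+1}\}=\mathrm{Div}(m)\cup\mathrm{Div}(q^{k+1})$; translating back through the dictionary, this is exactly the divisor set of $L=\mathsf{L}^{i/n}_m\cap\mathsf{L}^{i/n}_{q^{k+1}}$. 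This establishes both implications of the theorem.

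I expect the main difficulty to be bookkeeping rather than depth. One must take care that the correspondence reverses inclusions, so that a \emph{maximal} logic really does correspond to a \emph{covering} divisor set, and that the covering relation in the lattice of order ideals of $(\mathrm{Div}(n),\mid)$ is faithfully captured by ``adjoining a single minimal element''. The only genuinely content-bearing computation is the $\mathrm{lcm}$ argument forcing $d$ to be a prime power; everything else is a transcription of the Komori and Blok--Pigozzi machinery already assembled in the preliminaries of this subsection.
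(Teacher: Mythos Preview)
Your proof is correct and follows essentially the same strategy as the paper: both transfer the problem via Theorem~\ref{equivsys} and the Blok--Pigozzi duality to a combinatorial covering question on the divisor poset of $n$, then identify the covers of the element corresponding to $\mathsf{L}^{i/n}_m$. The only difference is presentational: you work directly with order ideals (down-sets) of $(\mathrm{Div}(n),\mid)$ and invoke the standard fact that covers in a lattice of down-sets arise by adjoining a single minimal outside element, together with an $\mathrm{lcm}$ argument to force that element to be a prime power, whereas the paper encodes logics by generating sets $\{m_1,\ldots,m_k\}$ under the preorder $\preceq$ and carries out an equivalent but more ad hoc case analysis.
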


\begin{proof}
Using the equivalence of Theorem \ref{equivsys} we obtain that the lattice of axiomatic extensions of $\mathsf{L}^{i}_{n}$ is isomorphic to the lattice of axiomatic extensions of $\L_{n+1}$. As mentioned above, every axiomatic extension of $\L_{n+1}$ is characterized by a finite set $\{m_{1}, \ldots, m_{k}\}$ where all
of its elements are divisors of $n$. Given two such sets $\{m_{1}, \ldots, m_{k}\}$ and $\{n_{1}, \ldots, n_{s}\}$, we define the following relation among finite subsets of divisors of $n$: $\{m_{1}, \ldots, m_{k}\}\preceq \{n_{1}, \ldots, n_{s}\}$ iff for every $1\leq i\leq k$ there is $1\leq j\leq s$ such that $m_{i}\vert n_{j}$. This relation $\preceq$ is the dual order of the lattice of axiomatic extensions of $\L_{n+1}$ in the following sense: $\{m_{1}, \ldots, m_{k}\}\preceq \{n_{1}, \ldots, n_{s}\}$  iff $\displaystyle \bigcap_{1\leq j\leq s}\L_{n_{j}+1}\leq \displaystyle \bigcap_{1\leq i\leq k}\L_{m_{i}+1}$. Clearly,  $\{m\}\preceq \{m, q\}$ and $\{m, q\}\not\preceq\{m\}$ if $q$ is a prime number such that $q\vert n$ and $q \nvert m$; Similarly,   $\{m\}\preceq\{m, q^{k+1}\}$ and $\{m, q^{k+1}\}\not\preceq\{m\}$ if $q$ is a prime number such that $q\vert n$,  $q^{k}| m$ and $q^{k+1} \nvert m$. Moreover if $\{m\}\preceq\{m_{1}, \ldots, m_{k}\}$ and $\{m_{1}, \ldots, m_{k}\}\not\preceq\{m\}$, then there is $m_{i}$ such that $m\vert m_{i}$.  If $m\neq m_{i}$ then there is a prime number $q$ such that $mq \vert m_{i}\vert n$. Thus   $\{m, q\}\preceq\{m_{1},\ldots, m_{k}\}$ if $q \nvert m$ and $\{m, q^{k+1}\}\preceq\{m_{1},\ldots, m_{k}\}$ if  $q^{k}| m$ and $q^{k+1} \nvert m$. If $m= m_{i}$, then  there is an $m_{j}$ with $1<j<k$ such that $m_{j}\nvert m$. If there is a prime number $q$ such that $q\vert m_{j}\vert n$ such that $q \nvert m$, then $\{m, q\}\preceq\{m_{1},\ldots, m_{k}\}$. Otherwise, there are a prime number $q$ and a natural $k>0$ such that $q^{k+1}\vert m_{j}\vert n$, $q^{k}| m$ and $q^{k+1} \nvert m$, then $\{m, q^{k+1}\}\preceq\{m_{1},\ldots, m_{k}\}$. Duality and Theorem \ref{equivsys} close the proof.
\end{proof}

As a corollary we obtain that the suficient condition of Proposition \ref{maxLnLm} is also necessary.

\begin{corollary}
Let $1 \leq i,m \leq n$. Then $\mathsf{L}^i_n=\langle \algLfin_{n+1},F_{i/n}\rangle$ is maximal w.r.t.\  $\mathsf{L}_{m}^{i/n}=\langle \algLfin_{m+1}, F_{i/n} \cap \domLfin_{m+1}
\rangle$ if and only if  there is some prime number $q$ and $k \geq 1$ such that $n=q^k$, and  $m=q^{k-1}$.
\end{corollary}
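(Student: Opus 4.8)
The plan is to obtain this statement as a direct specialization of the preceding Theorem to the case $L=\mathsf{L}^i_n$, combined with Proposition~\ref{maxLnLm} for the converse. First I would record that $\mathsf{L}^i_n=\mathsf{L}^{i/n}_n$, since $F_{i/n}\cap\domLfin_{n+1}=F_{i/n}$; hence $\mathsf{L}^i_n$ is trivially an axiomatic extension of itself and, under the correspondence described before the Theorem, it is the extension determined by the singleton divisor set $\{n\}$. The ``if'' direction is exactly Proposition~\ref{maxLnLm}, so only the ``only if'' direction needs the Theorem.

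For the necessity, I would apply the preceding Theorem with $L=\mathsf{L}^i_n$: maximality of $\mathsf{L}^i_n$ w.r.t.\ $\mathsf{L}^{i/n}_m$ holds iff there are a prime $q\mid n$ and some $k\geq0$ with $q^k\mid m$, $q^{k+1}\nvert m$, and $\mathsf{L}^i_n=\mathsf{L}^{i/n}_m\cap\mathsf{L}^{i/n}_{q^{k+1}}$. The key step is to read this last equality through the relation $\preceq$ introduced in the proof of the Theorem: the logic $\mathsf{L}^{i/n}_m\cap\mathsf{L}^{i/n}_{q^{k+1}}$ is the extension determined by the set $\{m,q^{k+1}\}$, so the equality holds iff $\{m,q^{k+1}\}$ and $\{n\}$ determine the same logic, i.e.\ iff $\{m,q^{k+1}\}\preceq\{n\}$ and $\{n\}\preceq\{m,q^{k+1}\}$. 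The first relation unwinds to $m\mid n$ and $q^{k+1}\mid n$ (so in particular $q^{k+1}\mid n$ comes out automatically), while the second unwinds to the alternative $n\mid m$ or $n\mid q^{k+1}$.

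It then remains to solve this divisibility condition, which is elementary. Since both $m$ and $q^{k+1}$ divide $n$, the alternative $n\mid m$ or $n\mid q^{k+1}$ forces $m=n$ or $q^{k+1}=n$. I would discard $m=n$: it makes $\mathsf{L}^{i/n}_m=\mathsf{L}^i_n=L$, so $L$ would not be a \emph{proper} sublogic and the maximality statement would be degenerate; moreover $m=n$ is incompatible with $q^{k+1}\nvert m$ given that $q^{k+1}\mid n$. Hence $q^{k+1}=n$. Then $m\mid n=q^{k+1}$ makes $m$ a power of $q$, and the constraints $q^k\mid m$ and $q^{k+1}\nvert m$ pin it down to $m=q^k$. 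Writing $k'=k+1\geq1$ yields $n=q^{k'}$ and $m=q^{k'-1}$, which is precisely the asserted condition.

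The only genuinely delicate point, and the step I would treat most carefully, is the translation in the second paragraph: that the intersection $\mathsf{L}^{i/n}_m\cap\mathsf{L}^{i/n}_{q^{k+1}}$ corresponds to the divisor set $\{m,q^{k+1}\}$ and that ``same logic'' is captured by $\preceq$-equivalence. This is nothing more than the dual isomorphism between axiomatic extensions of $\mathsf{L}^i_n$ and subvarieties of $MV_n$ used throughout this section, together with the fact, already exploited in the proof of the Theorem, that $\{n_1,\dots,n_s\}\preceq\{n\}$ iff each $n_j\mid n$ and $\{n\}\preceq\{n_1,\dots,n_s\}$ iff $n\mid n_j$ for some $j$. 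Once this correspondence is invoked correctly, the rest of the argument is routine, so I expect no substantial obstacle.
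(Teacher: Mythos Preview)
Your proposal is correct and follows essentially the same approach the paper intends: the paper gives no explicit proof, simply stating that the corollary records that the sufficient condition of Proposition~\ref{maxLnLm} is also necessary, which is exactly what you do---sufficiency from Proposition~\ref{maxLnLm}, necessity from the preceding Theorem applied with $L=\mathsf{L}^i_n$ and then a routine divisibility analysis. One small point you leave implicit: the Theorem carries the hypothesis $m\mid n$, while the corollary is stated for arbitrary $1\leq m\leq n$; this is harmless because maximality requires $\mathsf{L}^i_n$ to be a proper sublogic of $\mathsf{L}^{i/n}_m$, which under the algebraic correspondence forces $MV_m\subseteq MV_n$ and hence $m\mid n$, but it would be worth saying so explicitly.
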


The task of fully describing the lattice of all  all finitary extensions of $\L_{\infty}$, isomorphic to the lattice of all subquasivarieties of $MV$,  turns to be an heroic task since the class of all MV-algebras is $Q$-universal (see~\cite{AdDz}). For the finite valued case it is much simpler, since $MV_{n}$ is a locally finite discriminator variety (cf.~\cite{B_C_V, GT14}). Any locally finite quasivariety is generated by its critical algebras (see~\cite{Dz0}), where an algebra $A$ is said to be \emph{critical} iff it is a finite algebra not belonging to the quasivariety generated by all its proper subalgebras. A  description of all critical MV-algebras can be found in~\cite{GT14}.

\begin{theorem}\emph{\cite[Theorem 2.5]{GT14}}
\label{Wcritical}  An MV-algebra $A$ is critical if and only if $A$ is isomorphic to a finite MV-algebra $\mathbf{LV}_{n_{0}+1}\times\cdots\times\mathbf{LV}_{n_{l-1}+1} $ satisfying the following conditions:
\begin{enumerate}
 \item For every $i,j<l$, $ i\neq j$ implies $ n_{i}\neq n_{j}$.
 \item If there exists $n_{j}$, $j<l$ such that $n_{i}|n_{j}$ for some
  $i\neq j$, then $n_{j}$ is unique.
\end{enumerate}
\end{theorem}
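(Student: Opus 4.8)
The plan is to reduce the statement to a purely combinatorial condition on the divisibility of the factors, exploiting that finite MV-algebras are products of simple chains. First I would recall the standard structure theory: every finite MV-algebra is isomorphic to a product $\prod_{j<l}\mathbf{LV}_{n_j+1}$ of finite chains, each $\mathbf{LV}_{n+1}$ is simple, and $\mathbf{LV}_{m+1}$ embeds into $\mathbf{LV}_{n+1}$ iff $m\mid n$. Since $A$ is finite it has only finitely many subalgebras, all finite, so ultraproducts contribute nothing and $A$ is critical iff $A\notin\mathbf{ISP}(\mathcal{S})$, where $\mathcal{S}$ is the (finite) set of proper subalgebras of $A$; equivalently, $A$ is \emph{not} critical iff $A$ embeds into a product of proper subalgebras.

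The key reduction uses congruence-distributivity of MV-algebras together with simplicity of the factors: $\mathrm{Con}(A)\cong\mathbf{2}^{\{0,\dots,l-1\}}$, so every homomorphic image of $A$ is (isomorphic to) a sub-product $\prod_{k\in T}\mathbf{LV}_{n_k+1}$, and it is a proper quotient exactly when $T\ne\{0,\dots,l-1\}$. Composing an embedding $A\hookrightarrow\prod_t B_t$ with the projections yields homomorphisms $g_t\colon A\to B_t$, each with image a sub-product sitting inside $A$ as a proper subalgebra; joint injectivity means every factor index $i$ is kept by some $g_t$. Conversely, the projections of $A$ onto its sub-products provide such a family. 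This gives the clean reformulation: $A$ is \emph{not} critical iff for every $i<l$ there is a proper $T\subsetneq\{0,\dots,l-1\}$ with $i\in T$ such that $\prod_{k\in T}\mathbf{LV}_{n_k+1}$ embeds into $A$.

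Next I would prove the embedding lemma for products of chains, which is the crux. Any homomorphism from a product of simple chains into a single chain $\mathbf{LV}_{n_j+1}$ has a chain as image (a product of two or more nontrivial chains is not a chain), so it collapses all but exactly one coordinate; it therefore ``reads'' a single factor $\mathbf{LV}_{n_k+1}$ and requires $n_k\mid n_j$. Hence a homomorphism $\prod_{k\in T}\mathbf{LV}_{n_k+1}\to A$ exists only if \emph{every} codomain factor $n_j$ is divisible by some $n_k$ with $k\in T$, and an \emph{embedding} exists iff, in addition, there is a system of distinct representatives, i.e.\ an injection $\psi\colon T\to\{0,\dots,l-1\}$ with $n_k\mid n_{\psi(k)}$. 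Specializing to $T=\{0,\dots,l-1\}\setminus\{m\}$ shows that $\prod_{k\ne m}\mathbf{LV}_{n_k+1}$ embeds into $A$ iff some factor $n_k$ with $k\ne m$ satisfies $n_k\mid n_m$; and the coverage condition guarantees, conversely, that whenever any proper $T\ni i$ embeds then some index $m\notin T$ (so $m\ne i$) is divisible by a factor in $T$. Thus non-criticality is already witnessed by single-factor drops. The delicate point, easy to get wrong, is precisely this necessity of the coverage condition for the mere existence of a homomorphism: a factor cannot be dropped unless it is divisible by another factor.

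Finally I would package this combinatorially. Let $D=\{m<l:\ n_k\mid n_m\text{ for some }k\ne m\}$ be the set of ``droppable'' indices. The reformulation and the embedding lemma give that $A$ is not critical iff for every $i$ there is a droppable $m\ne i$, which fails exactly when $|D|\le1$; hence $A$ is critical iff $|D|\le1$. It remains to match $|D|\le1$ with conditions (1) and (2): a repeated value $n_a=n_b$ with $a\ne b$ would place both $a,b$ in $D$, so $|D|\le1$ forces the $n_j$ to be pairwise distinct, giving condition (1); and, under distinctness, $D$ is exactly the set of indices whose value is a proper multiple of another factor, so $|D|\le1$ is precisely condition (2). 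This closes the equivalence.
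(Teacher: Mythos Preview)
The paper does not prove this theorem; it is quoted verbatim from \cite{GT14} without proof, so there is no in-paper argument to compare against.

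Your proof is correct. The approach is the natural one: decompose the finite MV-algebra as a product of simple chains, use congruence-distributivity (and congruence-permutability) to identify $\mathrm{Con}(A)$ with $\mathbf{2}^{l}$ so that every homomorphic image is a sub-product, and then reduce criticality to a combinatorial statement about divisibility among the $n_i$. The key technical step is handled correctly: since $0$ and $1$ are constants, there is no homomorphism from a nontrivial MV-algebra to a nontrivial chain with trivial image, so every component $\pi_j\circ f$ of a homomorphism into $A$ must read exactly one domain factor $k$ with $n_k\mid n_j$; this forces the coverage condition and lets you restrict attention to single-factor drops $T=\{0,\dots,l-1\}\setminus\{m\}$. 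The final packaging via the set $D$ of droppable indices and the equivalence of $|D|\le 1$ with conditions~(1) and~(2) is clean and accurate.
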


Moreover the following result characterizes the inclusion among locally finite quasivarieties.

\begin{lemma} \emph{\cite[Lemma 2.9]{GT14}} \label{previous}
\label{distinct}
Let $\mathfrak{F} = \{\mathbf{ LV}_{n_{i1}+1}\times\cdots\times\mathbf{ LV}_{n_{il(i)}+1} \ : \ i\in I\}$ and
$\mathfrak{G} = \{\mathbf{ LV}_{m_{j1}+1}\times\cdots\times\mathbf{ LV}_{m_{jl(j)}+1} \ : \ j\in J\}$ be two finite families of
critical MV-algebras. Then it holds that
$$ \mathcal{Q}(\mathfrak{F}) \subseteq  \mathcal{Q}(\mathfrak{G})$$
if, and only if,
for every $i\in I$ there exists  a non-empty $H \subseteq J$ such that:
\begin{enumerate}
 \item For any $1\le k\le l(i)$ there are $j\in H$ and $1\le r\le l(j)$ such that
   $n_{ik}|m_{jr}$.
 \item For any $j\in H$ and $1\le r\le l(j)$ there exists $1\le k\le l(i)$ such that
   $n_{ik}|m_{jr}$.
\end{enumerate}
\end{lemma}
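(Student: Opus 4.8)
The plan is to turn the quasivariety inclusion into a concrete embeddability question and then describe all homomorphisms out of a finite product of simple \L ukasiewicz chains. First I would note that for a finite family of finite algebras the generated quasivariety collapses to $\mathbb{ISP}$: an ultraproduct of members of a finite family of finite algebras is isomorphic to one of the members (group the index set by isomorphism type, the block in the ultrafilter is homogeneous, and an ultrapower of a finite algebra is that algebra), so $\mathcal{Q}(\mathfrak{F})=\mathbb{ISP}(\mathfrak{F})$ and likewise for $\mathfrak{G}$. Since $\mathcal{Q}(\mathfrak{G})$ is the least quasivariety containing $\mathfrak{G}$, the inclusion $\mathcal{Q}(\mathfrak{F})\subseteq\mathcal{Q}(\mathfrak{G})$ holds iff every generator $A_i=\algLfin_{n_{i1}+1}\times\cdots\times\algLfin_{n_{il(i)}+1}$ lies in $\mathbb{ISP}(\mathfrak{G})$, i.e.\ embeds into some product $\prod_s B_{j(s)}$ of members $B_j=\algLfin_{m_{j1}+1}\times\cdots\times\algLfin_{m_{jl(j)}+1}$ of $\mathfrak{G}$. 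Thus it suffices to prove, for a single critical $A=A_i$, that $A\in\mathbb{ISP}(\mathfrak{G})$ iff there is a nonempty $H\subseteq J$ satisfying (1) and (2).

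The key structural step is a description of the homomorphisms leaving $A$. Each $\algLfin_{n+1}$ is simple, so the congruences of the finite product $A$ are exactly the factor (Boolean) congruences; hence every quotient of $A$ is a subproduct, and the only simple quotients are the single factors $\algLfin_{n_{ik}+1}$, reached by a projection $\pi_k$. Consequently any homomorphism $h\colon A\to\algLfin_{m+1}$ into a chain has nontrivial image (it preserves $0\neq 1$), and that image is both a subalgebra of $\algLfin_{m+1}$ and a simple quotient of $A$; therefore $h=\iota\circ\pi_k$ for some factor $k$ and an embedding $\iota\colon\algLfin_{n_{ik}+1}\hookrightarrow\algLfin_{m+1}$, which exists iff $n_{ik}\mid m$. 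By the universal property of the product codomain, a homomorphism $A\to B_j$ is precisely a tuple of such coordinate maps $A\to\algLfin_{m_{jr}+1}$; hence a (total) homomorphism $A\to B_j$ exists iff for every coordinate $r$ there is a factor $k$ with $n_{ik}\mid m_{jr}$, which is exactly condition (2) for that $j$.

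For the forward direction I would take an embedding $\phi\colon A\hookrightarrow\prod_s B_{j(s)}$ and set $H=\{j(s):s\}$. Each composite $\phi_s\colon A\to B_{j(s)}$ is a homomorphism, so the previous paragraph gives condition (2) for every $j\in H$. Injectivity of $\phi$ means the coordinate maps jointly separate the points of $A$; since each coordinate map is a projection $\pi_k$ followed by a chain embedding, and two elements differing only in factor $k_0$ can be separated only by a coordinate realizing $\pi_{k_0}$, for every $k$ some coordinate must realize $\pi_k$, yielding a pair $(j,r)$ with $j\in H$ and $n_{ik}\mid m_{jr}$, i.e.\ condition (1). For the converse, given $H$ with (1) and (2), I would build the embedding factorwise: by (1) choose for each $k$ a pair $(j_k,r_k)$ with $n_{ik}\mid m_{j_kr_k}$, and using (2) fill the remaining coordinates of $B_{j_k}$ with legitimate factor projections so as to obtain an honest homomorphism $g^{(k)}\colon A\to B_{j_k}$ whose $r_k$-th coordinate is $\pi_k$ composed with an embedding. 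The induced map $A\to\prod_k B_{j_k}$ is injective, since two elements differing in factor $k_0$ are separated by $g^{(k_0)}$, so $A\in\mathbb{ISP}(\mathfrak{G})$.

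The hard part, and the reason condition (2) is present at all, is the \emph{bundling} of chains inside each critical algebra $B_j$: one cannot use the chain $\algLfin_{m_{jr}+1}$ in isolation, because a homomorphism into the product $B_j$ must be defined on \emph{every} coordinate simultaneously. Making this precise is exactly the congruence analysis above, which shows that homomorphisms into a product are coordinate tuples and that each coordinate must be a genuine nontrivial factor projection; condition (2) is then the solvability of this total system, while condition (1) is the separation requirement forced by injectivity. The remaining care is routine: confirming $\mathcal{Q}=\mathbb{ISP}$ on finite families so that ultraproducts contribute nothing new, and checking that membership of the generators $A_i$ suffices to decide the inclusion.
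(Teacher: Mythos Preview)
The paper does not prove this lemma; it is quoted verbatim from \cite[Lemma~2.9]{GT14} and stated without argument, so there is no in-paper proof to compare against. Your proposal is a correct, self-contained proof, and it follows what is essentially the natural route: reduce $\mathcal{Q}$ to $\mathbb{ISP}$ on finite families of finite algebras, reduce the inclusion to membership of each generator $A_i$, and then analyse embeddings of $A_i$ into products of the $B_j$ via the congruence structure of finite products of simple MV-chains.

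Two points are worth making explicit in a final write-up, though you clearly have them in mind. First, the claim that every congruence on $A_i=\algLfin_{n_{i1}+1}\times\cdots\times\algLfin_{n_{il(i)}+1}$ is a product congruence uses that MV-algebras form a congruence-distributive (indeed arithmetical) variety, so the Fraser--Horn property applies; from simplicity of each factor one then gets that the simple quotients of $A_i$ are exactly the factor projections $\pi_k$. Second, the non-emptiness of $H$ in the forward direction is automatic because $A_i$ is nontrivial and hence cannot embed in the empty product, so at least one coordinate $B_{j(s)}$ must occur. With these two remarks spelled out, the argument is complete: condition~(2) is precisely what is needed for a total homomorphism $A_i\to B_j$ to exist, and condition~(1) is precisely what injectivity of the assembled map into $\prod_k B_{j_k}$ forces.
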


\subsection{Strong maximality among logics $\mathsf{L}^i_q$, $\bar{\sf L}^i_q$, and classical logic}
\label{CPL}

As a direct application of Lemma \ref{distinct}, we have the following particular case that will be used later.

\begin{corollary} \label{corollary-simple}
Consider the  following two sets of one critical MV-algebra each: \\
$\{\mathbf{ LV}_{q+1}\times \mathbf{ LV}_{2}\}$ and
$\{\mathbf{ LV}_{k+1}\}$, where $q$ is a prime number such that  $q > 1$. Then
\[\mathcal{Q}(\{\mathbf{ LV}_{q+1}\times \mathbf{ LV}_{2}\}) \subseteq \mathcal{Q}(\mathbf{ LV}_{k+1})\]
if and only if  $q | k$.
\end{corollary}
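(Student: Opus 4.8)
The plan is to read the statement off as a direct specialization of Lemma~\ref{distinct} (i.e. \cite[Lemma 2.9]{GT14}). First I would record that both algebras in play are genuinely critical, so that the lemma is applicable. The single factor $\mathbf{ LV}_{k+1}$ is trivially critical by Theorem~\ref{Wcritical}, since for one factor both defining conditions are vacuous. For $\mathbf{ LV}_{q+1}\times\mathbf{ LV}_{2}=\mathbf{ LV}_{q+1}\times\mathbf{ LV}_{1+1}$ the factor exponents are $n_0=q$ and $n_1=1$; as $q>1$ we have $n_0\neq n_1$, so condition~1 of Theorem~\ref{Wcritical} holds, and the only divisibility relation among the factors is $1\mid q$, so the multiple $q$ is the unique such exponent and condition~2 holds as well. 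Hence $\mathbf{ LV}_{q+1}\times\mathbf{ LV}_{2}$ is critical.

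Next I would fix the index bookkeeping required by Lemma~\ref{distinct}. Here $\mathfrak{F}=\{\mathbf{ LV}_{q+1}\times\mathbf{ LV}_{2}\}$ has a single index $i$ with $l(i)=2$ and factor exponents $n_{i1}=q$, $n_{i2}=1$, while $\mathfrak{G}=\{\mathbf{ LV}_{k+1}\}$ has a single index $j$ with $l(j)=1$ and $m_{j1}=k$. Since $J$ is a singleton, the only admissible non-empty $H\subseteq J$ is $H=J$ itself, so the criterion of Lemma~\ref{distinct} reduces to the conjunction of its two conditions evaluated at this unique $H$.

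Then I would unwind the two conditions. Condition~1 requires that each factor exponent $n_{ik'}$ divide some $m_{jr}=k$: for $k'=1$ this is exactly $q\mid k$, and for $k'=2$ it is $1\mid k$, which is automatic. Condition~2 requires that for the single exponent $m_{j1}=k$ there be some $n_{ik'}$ dividing $k$; this is discharged at once by $n_{i2}=1\mid k$. Consequently the whole criterion collapses to the single requirement $q\mid k$, and Lemma~\ref{distinct} then yields $\mathcal{Q}(\{\mathbf{ LV}_{q+1}\times\mathbf{ LV}_{2}\})\subseteq\mathcal{Q}(\mathbf{ LV}_{k+1})$ if and only if $q\mid k$, which is the claim.

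The hard part is purely one of careful index tracking rather than of mathematical substance: all the content is already packaged in Lemma~\ref{distinct}, and the only subtlety to watch is that the trivial factor $\mathbf{ LV}_{2}$ (exponent~$1$) never imposes any constraint in condition~1 and always satisfies condition~2, so that the divisibility test is governed solely by the nontrivial factor $q$. I would also note, as a sanity check, that primality of $q$ is not actually used in this argument; only $q>1$ enters, and it does so merely to guarantee criticality of $\mathbf{ LV}_{q+1}\times\mathbf{ LV}_{2}$.
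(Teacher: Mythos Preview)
Your proof is correct and follows exactly the same approach as the paper: both apply Lemma~\ref{distinct} with $I=J=\{1\}$, $n_{11}=q$, $n_{12}=1$, $m_{11}=k$ and check the two conditions. Your version is simply more explicit, spelling out the criticality verification and the case analysis that the paper compresses into ``one can check''; your closing remark that primality is not actually used (only $q>1$) is a correct and useful observation not made in the paper.
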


\begin{proof}  The two families of critical algebras above correspond in Lemma \ref{distinct} to take  $I = \{1\}$ and $J = \{1\}$, with $n_{11} = q$, $n_{12} = 1$, $m_{11} = k$. Then one can check that these values satisfy the two conditions of the lemma only in the case that $q | k$.
\end{proof}

Now, for any $k > 1$, we are able to provide a full description of the minimal subquasivarieties of $MV_{k}=\mathcal{Q}(\mathbf{LV}_{k+1})$ strictly containing the variety of Boolean algebras.

\begin{theorem} \label{minimal-simple}  
Let $k>1$.
The set of all minimal subquasivarieties of $MV_{k}=\mathcal{Q}(\mathbf{LV}_{k+1})$ among those strictly containing the class of all the Boolean algebras ${\bf B} = \mathcal{Q}(\mathbf{LV}_{2})$ is $$M^{k}=\{\mathcal{Q}(\mathbf{LV}_{q+1}\times\mathbf{LV}_{2}) \ : \ q > 1 \mbox{ prime, } q|k  \}.$$
\end{theorem}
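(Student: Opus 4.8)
The plan is to characterize the minimal subquasivarieties of $MV_k$ strictly above the Boolean algebras $\mathbf{B}$ by exploiting the fact that $MV_k$ is locally finite and hence generated by its critical algebras (Theorem~\ref{Wcritical}), so that every subquasivariety is generated by a finite family of critical $MV$-algebras of the form $\mathbf{LV}_{n_{1}+1}\times\cdots\times\mathbf{LV}_{n_{l}+1}$ with each $n_j\mid k$. The inclusion relation among such quasivarieties is governed by Lemma~\ref{previous}, and the special case I will lean on most heavily is Corollary~\ref{corollary-simple}, which tells me exactly when $\mathcal{Q}(\mathbf{LV}_{q+1}\times\mathbf{LV}_2)\subseteq\mathcal{Q}(\mathbf{LV}_{k+1})$, namely when $q\mid k$. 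This immediately shows that each candidate $\mathcal{Q}(\mathbf{LV}_{q+1}\times\mathbf{LV}_2)$ with $q$ prime and $q\mid k$ is indeed a subquasivariety of $MV_k$.

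First I would verify that each member of $M^k$ genuinely lies strictly between $\mathbf{B}$ and $MV_k$: the inclusion $\mathbf{B}\subseteq\mathcal{Q}(\mathbf{LV}_{q+1}\times\mathbf{LV}_2)$ holds because $\mathbf{LV}_2=\mathbf{B}_2$ is a factor, and the inclusion is strict since $\mathbf{LV}_{q+1}$ (for $q>1$) is not Boolean. Next I would establish \emph{minimality}: suppose $\mathcal{K}$ is a subquasivariety with $\mathbf{B}\subsetneq\mathcal{K}\subseteq\mathcal{Q}(\mathbf{LV}_{q+1}\times\mathbf{LV}_2)$. Writing $\mathcal{K}$ as generated by a family of critical algebras and applying Lemma~\ref{previous} with the singleton right-hand family $\{\mathbf{LV}_{q+1}\times\mathbf{LV}_2\}$, the divisibility constraints force every critical generator of $\mathcal{K}$ to have factors drawn from $\{1,q\}$; since $q$ is prime, the only critical algebras available are $\mathbf{LV}_2$ (giving $\mathbf{B}$) and $\mathbf{LV}_{q+1}\times\mathbf{LV}_2$ itself. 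Because $\mathcal{K}$ strictly contains $\mathbf{B}$, it must contain the latter, forcing equality. This is the crux of the minimality half.

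The remaining direction is to show that $M^k$ is \emph{exhaustive}: any quasivariety $\mathcal{K}$ minimal among those strictly containing $\mathbf{B}$ must already appear in $M^k$. Here I would take a critical generator $\mathbf{LV}_{n_1+1}\times\cdots\times\mathbf{LV}_{n_l+1}$ of $\mathcal{K}$ that is not Boolean, so some $n_j>1$; picking a prime $q\mid n_j$ (hence $q\mid k$), Corollary~\ref{corollary-simple} gives $\mathcal{Q}(\mathbf{LV}_{q+1}\times\mathbf{LV}_2)\subseteq\mathcal{K}$, and since this quasivariety strictly contains $\mathbf{B}$, minimality of $\mathcal{K}$ forces $\mathcal{K}=\mathcal{Q}(\mathbf{LV}_{q+1}\times\mathbf{LV}_2)\in M^k$.

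The main obstacle I anticipate is the bookkeeping inside Lemma~\ref{previous} in the minimality argument: one must argue carefully that a critical generator $\mathbf{LV}_{n_1+1}\times\cdots\times\mathbf{LV}_{n_l+1}$ of a subquasivariety of $\mathcal{Q}(\mathbf{LV}_{q+1}\times\mathbf{LV}_2)$ can only have factor indices in $\{1,q\}$, and then invoke the structural conditions~(1)--(2) of Theorem~\ref{Wcritical} to rule out repeated factors, so that no critical algebra other than $\mathbf{LV}_2$ and $\mathbf{LV}_{q+1}\times\mathbf{LV}_2$ survives. Once the divisibility conditions of Lemma~\ref{previous} are read off correctly, the primality of $q$ does all the real work, and the rest is routine verification.
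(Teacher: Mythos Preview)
Your proposal is correct and follows essentially the paper's route. The exhaustiveness half is verbatim the paper's argument, with one mis-citation: the inclusion $\mathcal{Q}(\mathbf{LV}_{q+1}\times\mathbf{LV}_2)\subseteq\mathcal{Q}(C)\subseteq\mathcal{K}$ comes from Lemma~\ref{previous}, not Corollary~\ref{corollary-simple}, since $C$ need not be a single chain. The only organizational difference is that the paper does not prove minimality of each $\mathcal{Q}(\mathbf{LV}_{q+1}\times\mathbf{LV}_2)$ directly as you do; instead it observes that distinct members of $M^k$ are pairwise incomparable (again via Lemma~\ref{previous}) and lets exhaustiveness together with incomparability yield minimality.

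There is, however, a small gap in your direct minimality argument. After condition~(1) of Lemma~\ref{previous} restricts the factor indices of a critical generator to $\{1,q\}$, and Theorem~\ref{Wcritical} rules out repeated factors, you are left with \emph{three} critical candidates, not two: $\mathbf{LV}_2$, $\mathbf{LV}_{q+1}$, and $\mathbf{LV}_{q+1}\times\mathbf{LV}_2$. The single chain $\mathbf{LV}_{q+1}$ is a perfectly good critical algebra with factors in $\{1,q\}$, and Theorem~\ref{Wcritical} does nothing to exclude it. What you actually need is condition~(2) of Lemma~\ref{previous}: it forces some factor index of the critical generator to divide the factor $1$ of $\mathbf{LV}_{q+1}\times\mathbf{LV}_2$, hence to equal $1$, and this is precisely what rules out $\mathbf{LV}_{q+1}$ (equivalently, $\mathbf{LV}_{q+1}\notin\mathcal{Q}(\mathbf{LV}_{q+1}\times\mathbf{LV}_2)$). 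Once you invoke both conditions of Lemma~\ref{previous} rather than attributing the work to Theorem~\ref{Wcritical}, your argument is complete.
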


\begin{proof}
 By Lemma \ref{previous} and the previous Corollary \ref{corollary-simple}, every $K\in M^{k}$ is a subquasivariety of $\mathcal{Q}(\mathbf{LV}_{k+1})$ strictly containing ${\bf B}$. Moreover, for every $K_{1}, K_{2}\in M^{k}$, if $K_{1}\neq K_{2}$ then $K_{1}\not\subseteq K_{2}$ and $K_{2}\not\subseteq K_{1}$.

On the other hand, let $K$ be a minimal subquasivariety of $\mathcal{Q}(\mathbf{LV}_{k+1})$ strictly containing ${\bf B}$.
Since $K\neq {\bf B}$, it must contain a critical algebra $C$ that, by Theorem \ref{Wcritical}, it must be such that $C\cong\mathbf{ LV}_{m_{1}+1}\times\cdots\times\mathbf{ LV}_{m_{s}+1}$, where   $m_{i}| k$ for every $1 \leq i \leq s$, and $m_{j} > 1$ for some $1\leq j\leq s$.
Hence, for every prime number $q$ such that $q|m_{j}$, and hence $q | k$, we have $\mathbf{LV}_{q+1}\times\mathbf{LV}_{2}\in \mathcal{Q}(C)\subseteq K$, and thus  $\mathcal{Q}(\mathbf{LV}_{q+1}\times\mathbf{LV}_{2}) \subseteq K$. Since we are assuming the minimality of $K$, it must be $\mathcal{Q}(\mathbf{LV}_{q+1}\times\mathbf{LV}_{2}) = K$.
\end{proof}

\begin{theorem}\label{axiomminimal-simple}
If $q > 0$ is a prime number, then $\mathcal{Q}(\mathbf{LV}_{q+1}\times\mathbf{LV}_{2})$ is axiomatized by the MV quasi-identities plus:
\begin{itemize}
\item $\gamma_{q}(x)\approx 1$ (the identity axiomatizing $\mathcal{V}(\mathbf{LV}_{q+1})$)
\item $q(x\land\lnot x)\approx 1 \Rightarrow y \lor \neg y \approx 1$
\end{itemize}
\end{theorem}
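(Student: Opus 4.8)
The plan is to show that the quasivariety $Q'$ axiomatized by the MV quasi-identities together with the two displayed axioms coincides with $Q := \mathcal{Q}(\mathbf{LV}_{q+1}\times\mathbf{LV}_{2})$, by locating both inside the (very short) lattice of subquasivarieties of $MV_q=\mathcal{Q}(\mathbf{LV}_{q+1})$. First I would record the shape of this lattice. Since the identity $\gamma_q(x)\approx 1$ axiomatizes $\mathcal{V}(\mathbf{LV}_{q+1})=MV_q$, every algebra satisfying the proposed axioms lies in $MV_q$; thus $Q'$ is a subquasivariety of $MV_q$, and so is $Q$. As $q$ is prime, the divisors of $q$ are $1$ and $q$, so by Theorem~\ref{Wcritical} the only critical MV-algebras in $MV_q$ are $\mathbf{LV}_{2}$, $\mathbf{LV}_{q+1}$ and $\mathbf{LV}_{q+1}\times\mathbf{LV}_{2}$. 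Since $MV_q$ is locally finite, every subquasivariety is generated by the critical algebras it contains; computing the inclusions with Lemma~\ref{previous} (and Corollary~\ref{corollary-simple}) one checks that these subquasivarieties form a four-element chain, the trivial quasivariety $\subsetneq {\bf B}\subsetneq Q\subsetneq MV_q$, where ${\bf B}=\mathcal{Q}(\mathbf{LV}_{2})$ is the variety of Boolean algebras. In particular $\mathbf{LV}_{q+1}\notin Q$, which is the instance of Lemma~\ref{previous} where the second clause fails for the factor $\mathbf{LV}_{2}$.

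Next I would prove $Q\subseteq Q'$ by verifying that the generator $\mathbf{LV}_{q+1}\times\mathbf{LV}_{2}$ satisfies both axioms; since $Q$ is the least quasivariety containing this algebra, this suffices. The identity $\gamma_q(x)\approx 1$ holds because both factors belong to $\mathcal{V}(\mathbf{LV}_{q+1})$ (recall $\mathbf{LV}_{2}$ is a subalgebra of $\mathbf{LV}_{q+1}$) and identities are preserved under products. The quasi-identity is the delicate point, and here it holds \emph{vacuously}: for any $x=(a,b)$ one has $x\land\neg x=(a\land\neg a,\,b\land\neg b)=(a\land\neg a,0)$ because $b\in\{0,1\}$ forces $b\land\neg b=0$, whence $q(x\land\neg x)=(q(a\land\neg a),0)\neq 1$. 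Thus the premise $q(x\land\neg x)\approx 1$ is never satisfiable in $\mathbf{LV}_{q+1}\times\mathbf{LV}_{2}$, so the implication is trivially true. Hence $\mathbf{LV}_{q+1}\times\mathbf{LV}_{2}\in Q'$ and therefore $Q\subseteq Q'$.

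For the reverse inclusion $Q'\subseteq Q$ I would show that $\mathbf{LV}_{q+1}\notin Q'$, i.e.\ that $\mathbf{LV}_{q+1}$ fails the quasi-identity. Taking $x=y=1/q$, we get $x\land\neg x=\min(1/q,(q-1)/q)=1/q$ (using $q\geq 2$) and hence $q(x\land\neg x)=\min(1,q\cdot 1/q)=1$, so the premise holds; but $y\lor\neg y=\max(1/q,(q-1)/q)=(q-1)/q\neq 1$, so the conclusion fails. Consequently $\mathbf{LV}_{q+1}\notin Q'$. Now $Q'$ is a subquasivariety of $MV_q$ with $Q\subseteq Q'$; by the chain described above the only subquasivarieties of $MV_q$ containing $Q$ are $Q$ and $MV_q$ itself, and since $MV_q$ contains $\mathbf{LV}_{q+1}$ while $Q'$ does not, we conclude $Q'=Q$, as desired.

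The main obstacle is the step ruling out anything strictly between $Q$ and $MV_q$: a priori the axioms might cut out a quasivariety not of the expected form. This is handled entirely by the locally finite structure theory recalled in the paper — Theorem~\ref{Wcritical} pins down the three critical algebras and Lemma~\ref{previous} collapses the possible subquasivarieties to a four-element chain — so that the only genuine computations left are evaluating the two axioms on the critical algebras: the vacuous satisfaction over $\mathbf{LV}_{q+1}\times\mathbf{LV}_{2}$ and the single counterexample in $\mathbf{LV}_{q+1}$. Alternatively, one could bypass the lattice description and argue with finite members directly: every finite algebra of $MV_q$ is $\mathbf{LV}_{q+1}^{\,b}\times\mathbf{LV}_{2}^{\,a}$, such an algebra satisfies the quasi-identity iff $a\geq 1$ or $b=0$, and each of these embeds into a power of $\mathbf{LV}_{q+1}\times\mathbf{LV}_{2}$ (identifying all the Boolean coordinates exhibits $\mathbf{LV}_{q+1}^{\,b}\times\mathbf{LV}_{2}$ as a subalgebra of $(\mathbf{LV}_{q+1}\times\mathbf{LV}_{2})^{b}$), which yields $Q'\subseteq Q$ by local finiteness.
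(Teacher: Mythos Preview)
Your proof is correct, but it is organized differently from the paper's. The paper argues directly at the level of critical algebras: after observing that $\mathbf{LV}_{q+1}\times\mathbf{LV}_2$ satisfies both axioms, it takes an arbitrary critical algebra $C\in\mathcal{V}(\mathbf{LV}_{q+1})$ satisfying the quasi-identity, writes $C\cong\mathbf{LV}_{m_1+1}\times\cdots\times\mathbf{LV}_{m_k+1}$ with each $m_i\in\{1,q\}$, and then splits on whether some $c\in C$ makes the premise $q(c\land\neg c)=1$ true. If so, the conclusion forces $C$ Boolean; if not, some factor must be $\mathbf{LV}_2$, and criticality (Theorem~\ref{Wcritical}) pins $C$ down to $\mathbf{LV}_2$ or $\mathbf{LV}_{q+1}\times\mathbf{LV}_2$. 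Either way $C\in Q$, and local finiteness finishes.

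Your main argument instead first determines the entire subquasivariety lattice of $MV_q$ as a four-element chain, and then locates $Q'$ in it by a single exclusion: the counterexample $x=y=1/q$ in $\mathbf{LV}_{q+1}$. This is a cleaner ``top-down'' strategy once the lattice is in hand, and it makes transparent why only one computation is needed for the hard inclusion; the cost is that you must justify the chain shape up front (which you do, via Theorem~\ref{Wcritical} and Lemma~\ref{previous}). The paper's ``bottom-up'' case analysis avoids pre-computing the lattice and is more self-contained for this particular theorem. Your closing alternative---classifying all finite members of $MV_q$ satisfying the quasi-identity and embedding them into powers of $\mathbf{LV}_{q+1}\times\mathbf{LV}_2$---is essentially a de-criticalized version of the paper's argument and also works.
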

\begin{proof}
It is easy to check that $\mathbf{LV}_{q+1}\times\mathbf{LV}_{2}$ satisfies these two quasi-identities.
Since the MV-identities and  $\gamma_{q}(x)\approx 1$ axiomatize $\mathcal{V}(\mathbf{LV}_{q+1})$, and $\mathcal{V}(\mathbf{LV}_{q+1})$ is a locally finite quasivariety, it is enough to prove that every critical MV-algebra $C\in\mathcal{V}(\mathbf{LV}_{q+1})$ where the quasi-equation $q(x\land\lnot x)\approx 1 \Rightarrow y \lor \neg y \approx 1$ holds, belongs to $\mathcal{Q}(\mathbf{LV}_{q+1}\times\mathbf{LV}_{2})$.

Let $C$ be a critical MV-algebra satisfying the axiomatization, then $C$ is such that
$C\cong\mathbf{ LV}_{m_{1}+1}\times\cdots\times\mathbf{ LV}_{m_{k}+1}$ satisfying conditions of Theorem \ref{Wcritical}. Moreover, every $1 \leq i \leq  k$, is such that either $m_{i} = 1$ or $m_{i} = q$ because  $\mathbf{ LV}_{m_{i}+1}$ belongs to $\mathcal{V}(\mathbf{LV}_{q+1})$. If there is $c\in C$ such that $q(c\land\lnot c)=1$ then, by the second quasi-equation of the above axiomatization,
$b \lor \neg b \approx 1$ for any $b\in C$. Thus we have $C\in {\bf B} \subseteq
\mathcal{Q}(\mathbf{LV}_{q+1}\times\mathbf{LV}_{2})$. Otherwise, recalling that either $m_{i} = 1$ or $m_{i} = q$ for every $i$, if for every $c\in C$ one has $q(c\land \lnot c)\neq 1$ then  $m_{i}=1$ for some $1\leq i\leq k$. In that case, by the characterization of critical algebras (Theorem \ref{Wcritical}), we have $C\cong \mathbf{LV}_{2}$ or $C\cong \mathbf{LV}_{q+1}\times\mathbf{LV}_{2}$. If $C\cong \mathbf{LV}_{2}$, then trivially  $C\in
\mathcal{Q}(\mathbf{LV}_{q+1}\times\mathbf{LV}_{2})$. If $C\cong \mathbf{LV}_{q+1}\times\mathbf{LV}_{2}$,
then clearly $C \in\mathcal{Q}(\mathbf{LV}_{q+1}\times\mathbf{LV}_{2})$.
\end{proof}
Above, note that the identity $y \lor \neg y \approx 1$ corresponds to the previously mentioned Panti's axiom $\gamma_{I, J}(y)$, with $I = \{1\}$ and $J = \emptyset$,  axiomatizing {\sf CPL} as an axiomatic extension of $\L_{n+1}$ for any $n > 1$.

Finally, we obtain the following characterization result about strong maximality of logics  $\bar{\sf L}^{j}_{q}$ with respect to classical logic.

\begin{theorem}  \label{final-simple}
Let  $q > 1$ be a prime number. Then, for every $j$ such that $0 < j \leq q$:
\begin{itemize}
 \item
 $ \bar{\sf L}^{j}_{q}$ is  strongly maximal with respect to {\sf CPL}  and it is axiomatized by 
  ${\sf H}^j_q$ plus the rule  $j(\varphi\land\lnot \varphi)/ (\psi \lor \neg \psi)^q$. 

  \item  $\mathsf{L}^{j}_{q}$ is strongly maximal w.r.t. $\bar{\sf L}^{j}_{q}$.
\end{itemize}
\end{theorem}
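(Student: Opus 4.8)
The plan is to translate both assertions into statements about the lattice of subquasivarieties of $MV_q=\mathcal{Q}(\mathbf{LV}_{q+1})$ and then to read them off from the classification of critical MV-algebras. By Theorem~\ref{equivsys} together with the Blok--Pigozzi machinery recalled above, every finitary extension of $\mathsf{L}^j_q$ is algebraizable with a subquasivariety of $MV_q$ as equivalent quasivariety, and this correspondence is a dual isomorphism; in particular $L_1$ is strongly maximal w.r.t.\ $L_2$ exactly when $K_{L_1}$ is minimal among subquasivarieties properly containing $K_{L_2}$ (i.e.\ $K_{L_1}$ covers $K_{L_2}$). First I would fix the three relevant quasivarieties: $\mathbf{B}=\mathcal{Q}(\mathbf{LV}_2)$ for {\sf CPL}, $MV_q=\mathcal{Q}(\mathbf{LV}_{q+1})$ for $\mathsf{L}^j_q$, and $\mathcal{Q}(\mathbf{LV}_{q+1}\times\mathbf{LV}_2)$ for $\bar{\sf L}^j_q$. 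The last identification holds because $\langle\mathbf{LV}_{q+1}\times\mathbf{LV}_2,\,F_{j/q}\times\{1\}\rangle$ is a reduced model of the algebraizable logic $\mathsf{L}^j_q$, its filter being exactly $\{x:\lambda_{j,q}(x)\approx 1\}$, so the logic it defines has equivalent quasivariety $\mathcal{Q}(\mathbf{LV}_{q+1}\times\mathbf{LV}_2)$.

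For the first item I would argue as follows. Since $q$ is prime, the only prime dividing $q$ is $q$ itself, so Theorem~\ref{minimal-simple} (with $k=q$) gives that $M^q=\{\mathcal{Q}(\mathbf{LV}_{q+1}\times\mathbf{LV}_2)\}$ is the set of \emph{all} minimal subquasivarieties of $MV_q$ strictly above $\mathbf{B}$. Hence $\mathcal{Q}(\mathbf{LV}_{q+1}\times\mathbf{LV}_2)$ covers $\mathbf{B}$, and by the duality $\bar{\sf L}^j_q$ is strongly maximal w.r.t.\ {\sf CPL}. For the axiomatization, Theorem~\ref{axiomminimal-simple} presents $\mathcal{Q}(\mathbf{LV}_{q+1}\times\mathbf{LV}_2)$ inside $MV_q$ by the single quasi-identity $q(x\land\lnot x)\approx 1 \Rightarrow y\lor\neg y\approx 1$; it then remains to check that, under the algebraization of $\mathsf{L}^j_q$ (whose defining equation is $\lambda_{j,q}(x)\approx 1$), this quasi-identity is precisely the algebraic counterpart of the rule $j(\varphi\land\lnot\varphi)/(\psi\lor\neg\psi)^q$. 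Using Lemma~\ref{exp-i-prop} one shows that in every algebra of $MV_q$ the predicate $\lambda_{j,q}(j(x\land\lnot x))\approx 1$ holds for exactly the non-Boolean elements, which are the same ones for which $q(x\land\lnot x)\approx 1$ holds; dually, $\lambda_{j,q}((y\lor\neg y)^q)\approx 1$ holds for exactly the Boolean (idempotent) elements, the same ones picked out by $y\lor\neg y\approx 1$. As the premise and conclusion predicates agree pointwise, the translated rule and the quasi-identity of Theorem~\ref{axiomminimal-simple} define the same subquasivariety, so ${\sf H}^j_q$ plus that rule is algebraized by $\mathcal{Q}(\mathbf{LV}_{q+1}\times\mathbf{LV}_2)$ and hence equals $\bar{\sf L}^j_q$.

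For the second item I would describe the entire lattice of subquasivarieties of $MV_q$. By Theorem~\ref{Wcritical}, a critical algebra in $MV_q$ is a product of pairwise distinct $\mathbf{LV}_{m+1}$ with $m\mid q$, so $m\in\{1,q\}$; the only possibilities are $\mathbf{LV}_2$, $\mathbf{LV}_{q+1}$ and $\mathbf{LV}_{q+1}\times\mathbf{LV}_2$. By Corollary~\ref{corollary-simple} (equivalently Lemma~\ref{distinct}) one has $\mathbf{LV}_{q+1}\notin\mathcal{Q}(\mathbf{LV}_{q+1}\times\mathbf{LV}_2)$, while $\mathbf{LV}_2$ does lie in it; consequently the only subquasivariety of $MV_q$ strictly containing $\mathcal{Q}(\mathbf{LV}_{q+1}\times\mathbf{LV}_2)$ is $MV_q$ itself, obtained by adjoining the missing critical algebra $\mathbf{LV}_{q+1}$. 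Thus $MV_q$ covers $\mathcal{Q}(\mathbf{LV}_{q+1}\times\mathbf{LV}_2)$, and the duality yields that $\mathsf{L}^j_q$ is strongly maximal w.r.t.\ $\bar{\sf L}^j_q$. Properness of the inclusions needed for the definitions of strong maximality to apply follows because the three quasivarieties $\mathbf{B}\subsetneq\mathcal{Q}(\mathbf{LV}_{q+1}\times\mathbf{LV}_2)\subsetneq MV_q$ are pairwise distinct.

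The main obstacle is the bridge in the first item between the purely algebraic quasi-identity of Theorem~\ref{axiomminimal-simple} and the concrete inference rule stated in the theorem: one must verify, through a careful computation involving $\lambda_{j,q}$, the strong and lattice connectives, and Lemma~\ref{exp-i-prop}, that the premise $j(\varphi\land\lnot\varphi)$ being designated matches $q(x\land\lnot x)\approx 1$ and that the conclusion $(\psi\lor\neg\psi)^q$ being designated matches $y\lor\neg y\approx 1$, uniformly over all of $MV_q$ and not merely on the generator $\mathbf{LV}_{q+1}$ (in particular on its subdirect products, where Boolean and fully non-Boolean elements no longer exhaust the algebra). The remaining covering and minimality facts are comparatively direct consequences of the already-established classification of critical MV-algebras and of Theorems~\ref{minimal-simple} and~\ref{axiomminimal-simple}.
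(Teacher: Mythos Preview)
Your proposal is correct and follows essentially the same route as the paper: translate strong maximality into covering relations in the subquasivariety lattice of $MV_q$ via Theorem~\ref{equivsys} and algebraizability, identify the three relevant quasivarieties, and read off both covering relations from the classification of critical algebras (Theorem~\ref{Wcritical}, Lemma~\ref{distinct}) together with Theorem~\ref{minimal-simple}.

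The only noteworthy difference is in how you bridge the quasi-identity of Theorem~\ref{axiomminimal-simple} and the stated inference rule. You propose to compute with $\lambda_{j,q}$ and verify pointwise that the two predicates agree, and you flag the extension to arbitrary subdirect products as the main obstacle. The paper sidesteps this computation with two short observations that make the bridge immediate: first, $j\,\varphi \Dashv\vDash_{\mathsf{L}^j_q} q\,\varphi$ for every $\varphi$ (this is Lemma~\ref{exp-i-prop}), so the premise can be rewritten with $q$ in place of $j$; second, $(qx)^q = qx$ holds throughout $MV_q$, i.e.\ both $q\alpha$ and $\alpha^q$ are Boolean-valued terms in $MV_q$, so for such formulas designatedness in $\mathsf{L}^j_q$ coincides with the condition $\approx 1$ used in the quasi-identity. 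These two facts dispose of your ``uniformly over all of $MV_q$'' concern without any case analysis on subdirect products, and they make the translation of premise and conclusion mechanical. Your more explicit $\lambda_{j,q}$ computation would also work, but it is longer than necessary.
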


\begin{proof}
 By using the equivalence of Theorem \ref{equivsys} and the algebraizability of $\L_{q+1}$, the lattice of subquasivarieties of $\mathcal{V}(\mathbf{LV}_{q+1})$ is dually order isomorphic to the lattice of all finitary extensions of $\L_{q+1}$. Clearly ${\sf CPL} = \L_2$ is the finitary extension of $ \bar{\sf L}^{j}_{q}$ corresponding to the subvariety $\mathcal{Q}(\mathbf{LV}_{2})$ of  $\mathcal{Q}(\mathbf{LV}_{q+1}\times\mathbf{LV}_{2})$, and
$ \bar{\sf L}^{j}_{q}$
is the finitary extension of $\mathsf{L}^{j}_{q}$ corresponding to the subquasivariety  $\mathcal{Q}(\mathbf{LV}_{q+1}\times\mathbf{LV}_{2})$ of $\mathcal{V}(\mathbf{LV}_{q+1})$.

By Theorem \ref{Wcritical}, the only critical algebras of $\mathcal{V}(\mathbf{LV}_{q+1})$ are   $\mathbf{LV}_{q+1}$, $\mathbf{LV}_{2}$ and $\mathbf{LV}_{2} \times \mathbf{LV}_{q+1}$ and, by   Lemma \ref{distinct}, all its  subquasivarieties are $\mathcal{Q}(\mathbf{LV}_{2}) \subsetneq \mathcal{Q}(\mathbf{LV}_{2} \times \mathbf{LV}_{q+1}) \subsetneq \mathcal{Q}(\mathbf{LV}_{q+1})$. Therefore,  by Theorem \ref{minimal-simple}, $ \bar{\sf L}^{j}_{q}$
is strongly maximal with respect to {\sf CPL}, while ${L}^{j}_{q}$
is strongly maximal with respect to $\bar{\sf L}^{j}_{q}$.

Finally, the axiomatization of  $\bar{\sf L}^{j}_{q}$ follows from Theorem \ref{axiomminimal-simple} and the facts that $j\,\varphi\Dashv\vDash_{\mathsf{L}^{j}_{q}}q\ \varphi$  holds for every formula $\varphi$ and that the equation $(q x)^q = q x$ is valid in the class $MV_q$.
\end{proof}

From the above proof, it readily follows the next corollary.

\begin{corollary} \label{between}
 $\bar{\sf L}^{j}_{q}$ is the unique strongly maximal logic w.r.t.\ {\sf CPL} above ${\sf L}^j_{q}$. In fact, $\bar{\sf L}^{j}_{q}$ is the only logic between ${\sf L}^j_{q}$ and {\sf CPL}.
\end{corollary}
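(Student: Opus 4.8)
The plan is to extract the corollary directly from the chain of subquasivarieties exhibited in the proof of Theorem~\ref{final-simple}, read through the dual order isomorphism between the finitary extensions of $\mathsf{L}^j_q$ and the subquasivarieties of $MV_q=\mathcal{Q}(\mathbf{LV}_{q+1})$, which is available by the algebraizability of $\mathsf{L}^j_q$ granted by Theorem~\ref{equivsys} together with \cite{blok:pig:01}.

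First I would recall that the proof of Theorem~\ref{final-simple} already established, via the classification of critical MV-algebras (Theorem~\ref{Wcritical}) and the inclusion criterion of Lemma~\ref{distinct}, that the subquasivarieties of $MV_q$ form the three-element chain $\mathcal{Q}(\mathbf{LV}_2)\subsetneq\mathcal{Q}(\mathbf{LV}_{q+1}\times\mathbf{LV}_2)\subsetneq\mathcal{Q}(\mathbf{LV}_{q+1})$. Transporting this chain through the dual isomorphism, and matching each class with its logic as in the proof of Theorem~\ref{final-simple} ($\mathcal{Q}(\mathbf{LV}_2)$ with {\sf CPL}, $\mathcal{Q}(\mathbf{LV}_{q+1}\times\mathbf{LV}_2)$ with $\bar{\sf L}^{j}_{q}$, and $\mathcal{Q}(\mathbf{LV}_{q+1})$ with $\mathsf{L}^j_q$), yields the chain of logics $\mathsf{L}^j_q\subsetneq\bar{\sf L}^{j}_{q}\subsetneq{\sf CPL}$ with no further member. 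The interval being exhausted by these three logics is exactly the second assertion: $\bar{\sf L}^{j}_{q}$ is the only logic strictly between $\mathsf{L}^j_q$ and {\sf CPL}.

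For the first assertion I would argue that any logic $L$ which is strongly maximal with respect to {\sf CPL} and lies above $\mathsf{L}^j_q$ is, by the definition of strong maximality, a proper sublogic of {\sf CPL} extending $\mathsf{L}^j_q$; hence by the chain just described $L$ equals either $\mathsf{L}^j_q$ or $\bar{\sf L}^{j}_{q}$. The first option is impossible, because $\bar{\sf L}^{j}_{q}$ is a proper finitary extension lying strictly between $\mathsf{L}^j_q$ and {\sf CPL}, so adjoining to $\mathsf{L}^j_q$ a {\sf CPL}-valid rule failing in $\mathsf{L}^j_q$ can yield $\bar{\sf L}^{j}_{q}\neq{\sf CPL}$; this is precisely the content of the second item of Theorem~\ref{final-simple}, that $\mathsf{L}^j_q$ is strongly maximal only with respect to $\bar{\sf L}^{j}_{q}$. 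Since the first item of Theorem~\ref{final-simple} already gives that $\bar{\sf L}^{j}_{q}$ is strongly maximal with respect to {\sf CPL}, it is the unique such logic. Alternatively, one may invoke directly the minimality/strong-maximality correspondence recalled before Theorem~\ref{minimal-simple}: a logic above $\mathsf{L}^j_q$ is strongly maximal with respect to {\sf CPL} iff its associated subquasivariety is a cover of the Boolean class $\mathcal{Q}(\mathbf{LV}_2)$, and by Theorem~\ref{minimal-simple} (with $q$ prime) the unique such cover inside $MV_q$ is $\mathcal{Q}(\mathbf{LV}_{q+1}\times\mathbf{LV}_2)$, that is, $\bar{\sf L}^{j}_{q}$.

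There is essentially no computational obstacle here, since everything rests on the already-proved description of the subquasivariety lattice of $MV_q$. The only point requiring a word of care is the implicit restriction to finitary (structural) extensions when speaking of ``logics between''; this is harmless in the present setting, since the logics in play are matrix logics over finite algebras and the Blok--Pigozzi correspondence is phrased for finitary deductive systems, so the dual isomorphism genuinely accounts for every logic in the interval.
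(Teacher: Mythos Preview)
Your proposal is correct and follows exactly the approach the paper intends: the corollary is stated to follow ``readily'' from the proof of Theorem~\ref{final-simple}, and you have spelled out precisely that argument, namely reading off the three-element subquasivariety chain $\mathcal{Q}(\mathbf{LV}_2)\subsetneq\mathcal{Q}(\mathbf{LV}_{q+1}\times\mathbf{LV}_2)\subsetneq\mathcal{Q}(\mathbf{LV}_{q+1})$ through the dual isomorphism with finitary extensions of $\mathsf{L}^j_q$. Your closing remark on the restriction to finitary extensions is a sensible clarification that the paper leaves implicit.
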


\begin{remark} \label{rem-simple}
It is worth noting that the rule $j(\varphi\land\lnot \varphi)/ (\psi \lor \neg \psi)^q$ exactly corresponds to the explosion rule $(exp_j)$ introduced in Section \ref{sectLqi}. Indeed,  the rule $j(\varphi\land\lnot \varphi)/ (\psi \lor \neg \psi)^q$  is clearly derivable from $(exp_j)$. On the other hand, assuming $j(\varphi\land\lnot \varphi)$, by this rule it follows that $(\psi \lor \neg \psi)^q$ for every $\psi$. Hence the logic becomes {\sf CPL} because the translation of the classical axiom $\psi \lor \neg \psi$ is precisely $(\psi \lor \neg \psi)^q$, and thus $\bot$ follows from $j(\varphi\land\lnot \varphi)$.
This does not come as a surprise, since as we have proved above, $\mathsf{L}^{j}_{q}$ is strongly maximal w.r.t. $\bar{\sf L}^{j}_{q}$ and so the latter is the only proper extension of $\mathsf{L}^{j}_{q}$  (with a finitary rule) properly contained in {\sf CPL}.
\end{remark}

As a corollary of the previous remark, it follows the completeness of $\overline{\sf H}^j_q$.

\begin{corollary} \label{cor-after-remark} $\overline{\sf H}^j_q$ is sound and complete w.r.t.\  $\bar{\sf L}^{j}_{q}$.
\end{corollary}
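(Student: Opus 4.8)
The plan is to identify the deductive system $\overline{\sf H}^j_q$ with the axiomatization of $\bar{\sf L}^{j}_{q}$ already produced in Theorem~\ref{final-simple}, and then read off soundness and completeness directly from that theorem. Recall from Definition~\ref{Hbar} that $\overline{\sf H}^j_q$ is ${\sf H}^j_q$ together with the explosion rule $(exp_j)$, namely $j(\varphi\land\neg\varphi)/\bot$, whereas Theorem~\ref{final-simple} states that $\bar{\sf L}^{j}_{q}$ is sound and complete with respect to the calculus ${\sf H}^j_q$ augmented with the rule $r$ given by $j(\varphi\land\lnot\varphi)/(\psi\lor\neg\psi)^q$. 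Hence it suffices to prove that adjoining $(exp_j)$ and adjoining $r$ to ${\sf H}^j_q$ yield the very same consequence relation, i.e.\ that $(exp_j)$ and $r$ are interderivable over ${\sf H}^j_q$. This is exactly the content of Remark~\ref{rem-simple}, so the corollary is essentially a matter of assembling the pieces.

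First I would treat the direction from $(exp_j)$ to $r$. Given an instance of the premise $j(\varphi\land\neg\varphi)$, the rule $(exp_j)$ yields $\bot$. Since $e(\bot)=0\notin F_{j/q}$ for every evaluation $e$ in $\algLfin_{q+1}$ (because $j\geq 1$ forces $j/q>0$), the entailment $\bot \vDash_{\mathsf{L}^j_q} (\psi\lor\neg\psi)^q$ holds vacuously, and so $\bot \vdash_{{\sf H}^j_q}(\psi\lor\neg\psi)^q$ by completeness of ${\sf H}^j_q$ with respect to $\mathsf{L}^j_q$. Chaining the two steps shows that $r$ is derivable in $\overline{\sf H}^j_q$.

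For the converse direction I would argue semantically through $\bar{\sf L}^{j}_{q}$. A valuation in $\bar{\sf L}^{j}_{q}=\langle \algLfin_{q+1}\times\algLfin_{2}, F_{j/q}\times\{1\}\rangle$ is a pair $e=(e_1,e_2)$ whose second coordinate $e_2$ takes values in the Boolean factor $\algLfin_2$; there $e_2(\varphi\land\neg\varphi)=0$, hence $e_2(j(\varphi\land\neg\varphi))=0\neq 1$, so the premise $j(\varphi\land\neg\varphi)$ is never designated in $\bar{\sf L}^{j}_{q}$. Consequently $j(\varphi\land\neg\varphi)\vDash_{\bar{\sf L}^{j}_{q}}\bot$ holds vacuously, i.e.\ $(exp_j)$ is a valid (in fact passive) rule over this matrix. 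Since by Theorem~\ref{final-simple} the calculus ${\sf H}^j_q+r$ is complete with respect to $\bar{\sf L}^{j}_{q}$, it follows that $(exp_j)$ is derivable in ${\sf H}^j_q+r$.

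Combining the two directions, $\overline{\sf H}^j_q={\sf H}^j_q+(exp_j)$ and ${\sf H}^j_q+r$ prove exactly the same consequences, so their consequence relations coincide. By Theorem~\ref{final-simple} the latter is sound and complete with respect to $\bar{\sf L}^{j}_{q}$, and therefore so is $\overline{\sf H}^j_q$. The only point requiring genuine care is the converse direction of the interderivability, for which the naive syntactic route (``from all instances $(\psi\lor\neg\psi)^q$ recover $\bot$'') would need the equivalence-of-systems machinery of Theorem~\ref{equivsys} to justify that the $(\psi\lor\neg\psi)^q$ are the $\tau$-translations of classical excluded middle; I sidestep this by the clean semantic observation that the Boolean factor $\algLfin_2$ of $\bar{\sf L}^{j}_{q}$ renders the premise of $(exp_j)$ undesignated, which is precisely why $(exp_j)$ is sound over $\bar{\sf L}^{j}_{q}$ and hence, by completeness of ${\sf H}^j_q+r$, derivable in it.
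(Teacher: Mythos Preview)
Your proposal is correct and follows essentially the same route as the paper: the corollary is obtained from Theorem~\ref{final-simple} once one knows that the rule $j(\varphi\land\neg\varphi)/(\psi\lor\neg\psi)^q$ and the explosion rule $(exp_j)$ are interderivable over ${\sf H}^j_q$, which is precisely the content of Remark~\ref{rem-simple}. The only difference is in how you justify the harder direction of that interderivability: the paper argues syntactically that the instances $(\psi\lor\neg\psi)^q$ are the translations (via Theorem~\ref{equivsys}) of classical excluded middle, so that under the premise $j(\varphi\land\neg\varphi)$ one has moved into {\sf CPL} and can therefore derive $\bot$; you instead observe semantically that the Boolean factor of $\bar{\sf L}^{j}_{q}$ forces the premise of $(exp_j)$ to be undesignated, whence $(exp_j)$ is valid in $\bar{\sf L}^{j}_{q}$ and thus derivable in ${\sf H}^j_q+r$ by the completeness already supplied by Theorem~\ref{final-simple}. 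Your variant is slightly cleaner in that it avoids unpacking the translation machinery, at the cost of using the completeness half of Theorem~\ref{final-simple} as an input rather than only its axiomatization statement; both arguments are sound and yield the same conclusion.
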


\subsection{Strong maximality with respect to systems $\mathsf{L}^{i}_{n}$}
\label{general}

Next theorems are generalizations of Theorems \ref{minimal-simple}, \ref{axiomminimal-simple} and \ref{final-simple} respectively.

\begin{theorem} \label{minimal}
Let $n>0$ and $k>1$.
The set of all minimal subquasivarieties of $MV_{nk}=\mathcal{Q}(\mathbf{LV}_{nk+1})$ among those strictly containing $\mathcal{Q}(\mathbf{LV}_{n+1})$ is \\

\noindent \begin{tabular}{rl}
$M^{nk}_{n} =$&$\{ \mathcal{Q}(\{\mathbf{LV}_{n+1},\mathbf{LV}_{q+1}\times\mathbf{LV}_{2}\}) \ : \ q \mbox{ prime, } q | k \mbox{ and } q\nvert n \} \bigcup$ \\
& $\{\mathcal{Q}(\{\mathbf{LV}_{n+1}, \mathbf{LV}_{q^{r+1}+1}\times\mathbf{LV}_{2}\}) \ : \ q \mbox{ prime, } q|k,  q^{r}|n \mbox{ and } q^{r+1}\nvert n \}.$ \\
\end{tabular}
\end{theorem}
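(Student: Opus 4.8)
The plan is to work entirely inside the lattice of subquasivarieties of the locally finite variety $MV_{nk}=\mathcal{Q}(\mathbf{LV}_{nk+1})$, which by the cited result of \cite{Dz0} is generated by its critical algebras; by Theorem~\ref{Wcritical} these are exactly the products $\mathbf{LV}_{c_1+1}\times\cdots\times\mathbf{LV}_{c_s+1}$ with each $c_l | nk$. The statement for $n=1$ is precisely Theorem~\ref{minimal-simple}, so I would follow the same two-step pattern, replacing the base ${\bf B}=\mathcal{Q}(\mathbf{LV}_2)$ by $\mathcal{Q}(\mathbf{LV}_{n+1})$. First I would record two purely arithmetical consequences of Lemma~\ref{distinct}, to be used repeatedly: for a critical algebra $C=\mathbf{LV}_{c_1+1}\times\cdots\times\mathbf{LV}_{c_s+1}$, (i) $\mathcal{Q}(C)\subseteq\mathcal{Q}(\mathbf{LV}_{n+1})$ if and only if $c_l | n$ for every $l$; and (ii) for any prime power $q^t$, $\mathbf{LV}_{q^t+1}\times\mathbf{LV}_2\in\mathcal{Q}(C)$ if and only if $q^t | c_l$ for some $l$. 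Both follow by a direct check of the two conditions of Lemma~\ref{distinct}; the point in (ii) is that the factor $\mathbf{LV}_2$ (value $1$) makes condition~2 automatic, so only condition~1 matters.

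For the inclusion $M^{nk}_n\subseteq\{\text{minimal subquasivarieties}\}$, consider a Type~A generator $K=\mathcal{Q}(\{\mathbf{LV}_{n+1},\mathbf{LV}_{q+1}\times\mathbf{LV}_2\})$ with $q$ prime, $q | k$, $q\nvert n$ (the Type~B case, with $\mathbf{LV}_{q^{r+1}+1}\times\mathbf{LV}_2$, is entirely parallel). It lies in $MV_{nk}$ since $q | k$, and by (ii) it strictly contains $\mathcal{Q}(\mathbf{LV}_{n+1})$ because $q\nvert n$ gives $\mathbf{LV}_{q+1}\times\mathbf{LV}_2\notin\mathcal{Q}(\mathbf{LV}_{n+1})$. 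For minimality, suppose $\mathcal{Q}(\mathbf{LV}_{n+1})\subsetneq K'\subseteq K$. Being locally finite and properly above $\mathcal{Q}(\mathbf{LV}_{n+1})$, $K'$ contains a critical $C\notin\mathcal{Q}(\mathbf{LV}_{n+1})$, so by (i) some factor $c_{l_0}\nvert n$. Applying Lemma~\ref{distinct} to $\mathcal{Q}(C)\subseteq K$ and testing $c_{l_0}$: it cannot divide $n$, hence it divides a factor of $\mathbf{LV}_{q+1}\times\mathbf{LV}_2$; as $c_{l_0}\neq 1$ (else $c_{l_0} | n$), this forces $c_{l_0}=q$. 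Then (ii) yields $\mathbf{LV}_{q+1}\times\mathbf{LV}_2\in\mathcal{Q}(C)\subseteq K'$, whence $K\subseteq K'$ and so $K'=K$.

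For the reverse inclusion, let $K$ be any minimal subquasivariety strictly above $\mathcal{Q}(\mathbf{LV}_{n+1})$. As before it contains a critical $C$ with a factor $c\nvert n$; since $c | nk$, some prime $q$ has $v_q(c)>v_q(n)$, where $v_q$ denotes the exponent of $q$ in a prime factorisation. Writing $r=v_q(n)$, we have $q^r | n$, $q^{r+1}\nvert n$, $q^{r+1} | c$, and necessarily $q | k$. By (ii), $\mathbf{LV}_{q^{r+1}+1}\times\mathbf{LV}_2\in\mathcal{Q}(C)\subseteq K$. If $r=0$ this is the Type~A algebra $\mathbf{LV}_{q+1}\times\mathbf{LV}_2$, and together with $\mathcal{Q}(\mathbf{LV}_{n+1})\subseteq K$ it gives $\mathcal{Q}(\{\mathbf{LV}_{n+1},\mathbf{LV}_{q+1}\times\mathbf{LV}_2\})\subseteq K$; minimality of $K$ forces equality, so $K\in M^{nk}_n$. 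If $r\geq 1$ the same argument with $\mathbf{LV}_{q^{r+1}+1}\times\mathbf{LV}_2$ shows $K$ is the corresponding Type~B member, whose defining constraints $q | k$, $q^r | n$, $q^{r+1}\nvert n$ hold by the choice of $r$.

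I expect the only genuinely delicate point to be the valuation bookkeeping that pins down the exact factor of $C$: in the minimality argument one must exclude the smaller prime-power divisors of $q^{r+1}$ using $q^r | n$ (so that $c_{l_0}\neq q^{\,j}$ for $j\leq r$), and in the reverse inclusion one must read off the correct exponent $r+1$ from $v_q(c)>v_q(n)$. Everything else reduces, via (i) and (ii), to mechanical applications of Lemma~\ref{distinct}, exactly mirroring the proof of Theorem~\ref{minimal-simple}.
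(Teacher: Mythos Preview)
Your proof is correct and follows essentially the same strategy as the paper: both arguments reduce everything to Lemma~\ref{distinct} and the classification of critical algebras (Theorem~\ref{Wcritical}), and your reverse inclusion is virtually identical to the paper's. The only difference is in the forward direction: the paper verifies that the members of $M^{nk}_n$ are pairwise incomparable and then combines this with the reverse inclusion to obtain minimality, whereas you prove minimality of each $K\in M^{nk}_n$ directly by showing that any $K'$ with $\mathcal{Q}(\mathbf{LV}_{n+1})\subsetneq K'\subseteq K$ must already contain the extra generator $\mathbf{LV}_{q^{r+1}+1}\times\mathbf{LV}_2$; this is a minor variation, not a genuinely different route.
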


\begin{proof}
 By Lemma \ref{previous}, every $K\in M^{nk}_{n}$ is a subquasivariety of $\mathcal{Q}(\mathbf{LV}_{nk+1})$ strictly containing $\mathcal{Q}(\mathbf{LV}_{n+1})$. Moreover, for every $K_{1}, K_{2}\in M^{nk}_{n}$, if $K_{1}\neq K_{2}$ then $K_{1}\not\subseteq K_{2}$ and $K_{2}\not\subseteq K_{1}$.

Let $K$ be a minimal subquasivariety of $\mathcal{Q}(\mathbf{LV}_{nk+1})$ strictly containing $\mathcal{Q}(\mathbf{LV}_{n+1})$. Trivially, $\mathbf{LV}_{n+1}\in K$. Since $K\neq \mathcal{Q}(\mathbf{LV}_{n+1})$, then it must contain a critical algebra $C\cong\mathbf{ LV}_{m_{1}+1}\times\cdots\times\mathbf{ LV}_{m_{s}+1}$ such that  $m_{i}|nk$ for every $1 \leq i\leq s$ and $m_{j} \nvert n$ for some $1\leq j\leq s$. If there is a prime number $q|m_{j}$ such that $q\nvert n$, then $\mathbf{LV}_{q+1}\times\mathbf{LV}_{2}\in \mathcal{Q}(C)\subseteq K$. Otherwise, there is a prime $q$ such that  $q|m_{j}$  and $q^{r}|n$, and for some $r\geq 1$, $q^{r+1}\! \not | n$ and $q^{r+1}|m_{j}$, whence $\mathbf{LV}_{q^{r+1}+1}\times\mathbf{LV}_{2}\in \mathcal{Q}(C)\subseteq K$. Thus, in both cases $ K$ contains  some $K_{i}\in M^{nk}_{n}$, from which it follows that $K \in M^{nk}_{n}$ since we are assuming minimality of $K$.
\end{proof}

\begin{theorem}\label{axiomminimal}
For every  $n>0$.
\begin{itemize}
\item If $q$ is a prime number such that $q \nvert n$, 
then $\mathcal{Q}(\{\mathbf{LV}_{n+1},\mathbf{LV}_{q+1}\times\mathbf{LV}_{2}\})$ is axiomatized by the MV identities plus
\begin{itemize}
\item  $\gamma_{\{n, q\},\emptyset}(x)\approx 1 \mbox{ (the identity axiomatizing }
\mathcal{V}(\{\mathbf{LV}_{n+1},\mathbf{LV}_{q+1}\})\mbox{})$
\item  $nq(x\land\lnot x)\approx 1 \Rightarrow \gamma_{\{n\},\emptyset}(y)\approx 1.$
\end{itemize}
 \item If $q$ is a prime number such that $q^{r}| n$ and $q^{r+1} \nvert n$, 
 $\mathcal{Q}(\{\mathbf{LV}_{n+1}, \mathbf{LV}_{q^{r+1}+1}\times\mathbf{LV}_{2}\})$ is axiomatized by the MV identities plus
 \begin{itemize}
 \item $\gamma_{\{n, q^{r+1}\},\emptyset}(x)\approx 1\mbox{ (the identity axiomatizing } \mathcal{V}(\{\mathbf{LV}_{n+1},\mathbf{LV}_{q^{r+1}+1}\}) \mbox{} )$
 \item $nq(x\land\lnot x)\approx 1 \Rightarrow \gamma_{\{n\},\emptyset}(y)\approx 1.$
 \end{itemize}
\end{itemize}
\end{theorem}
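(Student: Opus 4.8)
The plan is to treat both items by the same two-step argument used for Theorem~\ref{axiomminimal-simple}: first check that the two generating algebras satisfy the proposed conditions (soundness), and then show that every critical algebra of the underlying variety which validates the quasi-identity already lies in the target quasivariety (completeness). I will write out the first item; the second is obtained by replacing $q$ with $q^{r+1}$ in the variety axiom while keeping the quasi-identity with multiplier $nq$. Set $V=\mathcal{V}(\{\mathbf{LV}_{n+1},\mathbf{LV}_{q+1}\})$, the variety axiomatized by the MV identities together with $\gamma_{\{n,q\},\emptyset}(x)\approx 1$. Since $V$ is generated by finitely many finite algebras it is locally finite, hence generated by its critical algebras (Theorem~\ref{Wcritical}); so for the completeness half it suffices to consider critical $C\in V$ satisfying the quasi-identity.

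For soundness I would check that both generators satisfy the two displayed conditions. The identity $\gamma_{\{n,q\},\emptyset}(x)\approx 1$ holds in $\mathbf{LV}_{n+1}$ and in $\mathbf{LV}_{q+1}\times\mathbf{LV}_2$, because all their factors ($\mathbf{LV}_{n+1}$, $\mathbf{LV}_{q+1}$ and the subalgebra $\mathbf{LV}_2$) lie in $V$. For the quasi-identity, $\mathbf{LV}_{n+1}$ already validates the consequent $\gamma_{\{n\},\emptyset}(y)\approx 1$, while $\mathbf{LV}_{q+1}\times\mathbf{LV}_2$ has a Boolean factor, on which $x\land\lnot x=0$, so the antecedent $nq(x\land\lnot x)\approx 1$ is never satisfiable there and the quasi-identity holds vacuously.

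The heart of the argument is completeness. Let $C\cong\mathbf{LV}_{m_1+1}\times\cdots\times\mathbf{LV}_{m_s+1}$ be critical in $V$ (so each $m_k\mid n$, or $m_k\mid q$ which forces $m_k\in\{1,q\}$) and assume it satisfies the quasi-identity; I must place $C$ in $\mathcal{Q}(\{\mathbf{LV}_{n+1},\mathbf{LV}_{q+1}\times\mathbf{LV}_2\})$. The key computation decides when the antecedent is satisfiable in $C$: coordinatewise $nq(c_k\land\lnot c_k)=1$ requires $nq\lfloor m_k/2\rfloor\ge m_k$, which fails for $m_k=1$ and holds for every $m_k\ge 2$ once one uses the uniform bound $m_k\le nq$ (valid since $m_k\mid n$ or $m_k=q$). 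Hence the antecedent is satisfiable in $C$ exactly when $C$ has no Boolean factor, and the proof splits into two cases. If every $m_k\ge 2$, the antecedent is met, forcing the consequent $\gamma_{\{n\},\emptyset}(y)\approx 1$, i.e.\ every $m_k\mid n$; then each $\mathbf{LV}_{m_k+1}$ embeds into $\mathbf{LV}_{n+1}$, so $C\hookrightarrow \mathbf{LV}_{n+1}^{\,s}$ and $C\in\mathcal{Q}(\mathbf{LV}_{n+1})$, which sits inside the target. If instead some $m_k=1$, the antecedent is vacuous and I would verify membership directly via Lemma~\ref{distinct}, taking $H=\{1,2\}$ with the two critical generators $\mathbf{LV}_{n+1}$ and $\mathbf{LV}_{q+1}\times\mathbf{LV}_2$: condition~(1) holds because every factor of $C$ divides $n$ or equals $q$, and condition~(2) holds because the Boolean factor $1$ divides each of $n$, $q$ and $1$, in particular covering the factor $m_{22}=1$ of $\mathbf{LV}_{q+1}\times\mathbf{LV}_2$.

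For the second item the same script applies: factors now satisfy $m_k\mid n$ or $m_k\mid q^{r+1}$, the only divisor of $q^{r+1}$ not dividing $n$ being $q^{r+1}$ itself (as $q^r\mid n$), and the bound $m_k\le nq$ persists since $q^{r+1}\le nq$; in the Boolean case Lemma~\ref{distinct} is applied with $\mathbf{LV}_{q^{r+1}+1}\times\mathbf{LV}_2$ in place of $\mathbf{LV}_{q+1}\times\mathbf{LV}_2$. I expect the main obstacle to be precisely the satisfiability computation for $nq(x\land\lnot x)$ — fixing the correct multiplier and proving the inequality $nq\lfloor m_k/2\rfloor\ge m_k$ uniformly over all admissible factor sizes — together with the careful bookkeeping of the two conditions of Lemma~\ref{distinct}, where the presence of the Boolean factor is exactly what rescues condition~(2).
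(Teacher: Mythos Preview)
Your proof is correct and follows the same two-step scheme as the paper: soundness on the two generators, then completeness by checking that every critical algebra of the ambient locally finite variety satisfying the quasi-identity lies in the target quasivariety, with the case split governed by whether the antecedent $nq(x\land\lnot x)\approx 1$ is satisfiable (equivalently, whether $C$ has a Boolean factor). The only real difference is in the Boolean-factor case: the paper invokes the structural conditions of Theorem~\ref{Wcritical} to force $C\cong\mathbf{LV}_2$ or $C\cong\mathbf{LV}_{m+1}\times\mathbf{LV}_2$ and then treats each form by hand, whereas you bypass that reduction and apply Lemma~\ref{distinct} directly with $H=\{1,2\}$, using the Boolean factor $m_k=1$ to witness condition~(2) for every factor of both generators --- a slightly more streamlined route to the same conclusion.
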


\begin{proof} We prove the first item, the other is proved in a analogous way.
It is easy to check that $\mathbf{LV}_{n+1}$ and $\mathbf{LV}_{q+1}\times\mathbf{LV}_{2}$ satisfy all the quasi-identities.
Since the MV-identities with  $\gamma_{\{n, q\},\emptyset}(x)\approx 1$ axiomatize $\mathcal{V}(\{\mathbf{LV}_{n+1},\mathbf{LV}_{q+1}\})$ and $\mathcal{V}(\{\mathbf{LV}_{n+1},\mathbf{LV}_{q+1}\})$ is a locally finite quasivariety, it is enough to prove that every critical MV-algebra $C\in\mathcal{V}(\{\mathbf{LV}_{n+1},\mathbf{LV}_{q+1}\})$ where the quasiequation $nq(x\land\lnot x)\approx 1 \Rightarrow \gamma_{\{n\},\emptyset}(y)\approx 1$ holds, belongs to $\mathcal{Q}(\{\mathbf{LV}_{n+1},\mathbf{LV}_{q+1}\times\mathbf{LV}_{2}\})$. Therefore, let $C$ be a critical MV-algebra satisfying the axiomatization. Then, $C$ is such that
$C\cong\mathbf{ LV}_{m_{1}+1}\times\cdots\times\mathbf{ LV}_{m_{r}+1}$ satisfying conditions of Theorem \ref{Wcritical}, and moreover for every $1 \leq i \leq k$,  either $m_{i}|n$ or $m_{i} = q$ because  $C \in \mathcal{V}(\{\mathbf{LV}_{n+1},\mathbf{LV}_{q+1}\})$. If there is $c\in C$ such that $nq(c\land\lnot c)=1$ then, by the second quasi-equation of the axiomatization above,
$\gamma_{\{n\},\emptyset}(b)\approx 1$ for any $b\in C$, thus $C\in \mathcal{V}(\{\mathbf{LV}_{n+1}\})=\mathcal{Q}(\{\mathbf{LV}_{n+1}\})\subseteq
\mathcal{Q}(\{\mathbf{LV}_{n+1},\mathbf{LV}_{q+1}\times\mathbf{LV}_{2}\})$. If for every $c\in C$, $nq(c\land \lnot c)\neq 1$ then  $m_{i}=1$ for some $1\leq i\leq k$. In that case, by the characterization of critical algebras (Theorem \ref{Wcritical}), either $C\cong \mathbf{LV}_{2}$ or $C\cong \mathbf{LV}_{m+1}\times\mathbf{LV}_{2}$. If $C\cong \mathbf{LV}_{2}$, then trivially  $C\in
\mathcal{Q}(\{\mathbf{LV}_{n+1},\mathbf{LV}_{q+1}\times\mathbf{LV}_{2}\})$. Otherwise, if $C\cong \mathbf{LV}_{m+1}\times\mathbf{LV}_{2}$, since $C\in\mathcal{V}(\{\mathbf{LV}_{n+1},\mathbf{LV}_{q+1}\})$, either $m|n$ or $m=q$. If $m|n$ then $C\in \mathcal{V}(\{\mathbf{LV}_{n+1}\})=\mathcal{Q}(\{\mathbf{LV}_{n+1}\})\subseteq
\mathcal{Q}(\{\mathbf{LV}_{n+1},\mathbf{LV}_{q+1}\times\mathbf{LV}_{2}\})$. If $m=q$ then $C\cong \mathbf{LV}_{q+1}\times\mathbf{LV}_{2}\in\mathcal{Q}(\{\mathbf{LV}_{n+1},\mathbf{LV}_{q+1}\times\mathbf{LV}_{2}\})$.
\end{proof}

If $1\leq i,m\leq n$, by analogy with $\mathsf{L}^{i/n}_m$, we define the matrix logic $$\bar{\sf L}^{i/n}_{m}=\langle \algLfin_{m+1} \times \algLfin_{2}, (F_{i/n}\cap\algLfin_{m+1}) \times\{1\}  \rangle.$$
Then we have the following generalization of Theorem \ref{final-simple}.

\begin{theorem}
Let  $0<i\leq n$ be natural numbers  and let $q$ be a prime number. Then we have:
\begin{itemize}
 \item If
 $q \nvert n$ then, for every $j$ such that $(i-1)q<j\leq iq$, $\mathsf{L}^{i}_{n}\cap \bar{\sf L}^{j/nq}_{q}$ is  strongly maximal with respect to $\mathsf{L}^{i}_{n}$,  and it is axiomatized by ${\sf H}^{j/nq}_{n, q}$ plus the rule $j(\varphi\land\lnot \varphi)/\gamma^{j/nq}_{n}(\psi)$.

  \item If $q^{r}| n$ and $q^{r+1} \nvert n$  then, for every $j$ such that $(i-1)q<j\leq iq$, $\mathsf{L}^{i}_{n}\cap \bar{\sf L}^{j/nq}_{q^{r+1}}$ is  strongly maximal with respect  to $\mathsf{L}^{i}_{n}$,  and it is axiomatized by ${\sf H}^{j/nq}_{n, q^{r+1}}$ plus the rule  $j(\varphi\land\lnot \varphi)/\gamma^{j/nq}_{n}(\psi)$.

\end{itemize}
Recall that  in the above rules $\gamma^{j/nq}_{n}(\psi)$ refers to the axiom in one variable that axiomatizes  $\mathsf{L}^{j/nq}_{n}$ relative to $\mathsf{L}^{j}_{nq}$.
Moreover, every finitary  extension of some $\mathsf{L}^{j}_{k}$ is strongly maximal with respect $\mathsf{L}^{i}_{n}$ iff it is of one of the two preceeding types.
\end{theorem}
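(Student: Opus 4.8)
The plan is to transfer the lattice-theoretic argument of Theorem \ref{final-simple} to this more general situation, replacing the role of {\sf CPL} by that of $\mathsf{L}^i_n$ and using Theorems \ref{minimal} and \ref{axiomminimal} in place of Theorems \ref{minimal-simple} and \ref{axiomminimal-simple}. Fix the prime $q$ and an index $j$ with $(i-1)q<j\leq iq$. The first step is to place all the logics in play as finitary extensions of a common algebraizable base: by Theorem \ref{equivsys}, $\mathsf{L}^j_{nq}$ is equivalent to $\L_{nq+1}$, hence algebraizable with equivalent variety semantics $MV_{nq}=\mathcal{Q}(\mathbf{LV}_{nq+1})$, and the lattice of its finitary extensions is dually isomorphic to the lattice of subquasivarieties of $MV_{nq}$. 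Since $n\mid nq$, $q\mid nq$ and $q^{r+1}\mid nq$, all of $\mathbf{LV}_{n+1}$, $\mathbf{LV}_{q+1}\times\mathbf{LV}_2$ and $\mathbf{LV}_{q^{r+1}+1}\times\mathbf{LV}_2$ belong to $MV_{nq}$.

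The key identification is that the chosen range for $j$ forces $F_{j/nq}\cap\domLfin_{n+1}=F_{i/n}$ (indeed $\lceil j/q\rceil=i$ is equivalent to $(i-1)q<j\leq iq$), so that $\mathsf{L}^i_n=\mathsf{L}^{j/nq}_n$ is exactly the axiomatic extension of $\mathsf{L}^j_{nq}$ associated with $\mathcal{Q}(\mathbf{LV}_{n+1})$. As in the proof of Theorem \ref{final-simple}, $\bar{\sf L}^{j/nq}_q$ is the extension of $\mathsf{L}^j_{nq}$ associated with $\mathcal{Q}(\mathbf{LV}_{q+1}\times\mathbf{LV}_2)$ (and $\bar{\sf L}^{j/nq}_{q^{r+1}}$ with $\mathcal{Q}(\mathbf{LV}_{q^{r+1}+1}\times\mathbf{LV}_2)$). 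Because the consequence relation of an intersection of matrix logics is the intersection of the respective consequence relations, under the dual isomorphism $\mathsf{L}^i_n\cap\bar{\sf L}^{j/nq}_q$ is the extension of $\mathsf{L}^j_{nq}$ whose associated quasivariety is the join $\mathcal{Q}(\mathbf{LV}_{n+1})\vee\mathcal{Q}(\mathbf{LV}_{q+1}\times\mathbf{LV}_2)=\mathcal{Q}(\{\mathbf{LV}_{n+1},\mathbf{LV}_{q+1}\times\mathbf{LV}_2\})$, and analogously for the second case.

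Recalling that $L$ is strongly maximal with respect to $\mathsf{L}^i_n$ iff $\mathcal{Q}(L)$ is a minimal subquasivariety properly containing $\mathcal{Q}(\mathbf{LV}_{n+1})$, I would now invoke Theorem \ref{minimal} with $k=q$. Since $q$ is prime, $M^{nq}_n$ reduces to a single member: $\mathcal{Q}(\{\mathbf{LV}_{n+1},\mathbf{LV}_{q+1}\times\mathbf{LV}_2\})$ when $q\nvert n$, and $\mathcal{Q}(\{\mathbf{LV}_{n+1},\mathbf{LV}_{q^{r+1}+1}\times\mathbf{LV}_2\})$ when $q^r\mid n$ and $q^{r+1}\nvert n$. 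This yields strong maximality in both cases. For the axiomatizations I would translate the quasi-equational presentations of Theorem \ref{axiomminimal} through the algebraizability correspondence: the identity $\gamma_{\{n,q\},\emptyset}(x)\approx 1$ (resp.\ $\gamma_{\{n,q^{r+1}\},\emptyset}(x)\approx 1$) yields the axioms of ${\sf H}^{j/nq}_{n,q}$ (resp.\ ${\sf H}^{j/nq}_{n,q^{r+1}}$), while the quasi-identity $nq(x\land\neg x)\approx 1\Rightarrow\gamma_{\{n\},\emptyset}(y)\approx 1$ becomes the rule $j(\varphi\land\neg\varphi)/\gamma^{j/nq}_n(\psi)$; here one uses, as in Theorem \ref{final-simple}, that in $MV_{nq}$ one has $nq(c\land\neg c)=1$ iff $c\land\neg c\neq 0$, which by Lemma \ref{exp-i-prop} holds iff $j(c\land\neg c)\geq j/nq$, i.e.\ iff $j(\varphi\land\neg\varphi)$ is designated. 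The final \emph{moreover} clause is exhaustiveness: any finitary extension of some $\mathsf{L}^j_k$ that is strongly maximal with respect to $\mathsf{L}^i_n$ corresponds to a minimal cover of $\mathcal{Q}(\mathbf{LV}_{n+1})$; every such cover lies in some $MV_{nk}$, and Theorem \ref{minimal} lists all covers inside each $MV_{nk}$, so each is of one of the two displayed types.

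The main obstacle I expect is the careful bookkeeping in the second part: checking that the matrix $\bar{\sf L}^{j/nq}_m$ with its specific designated set $(F_{j/nq}\cap\algLfin_{m+1})\times\{1\}$ is genuinely the extension of $\mathsf{L}^j_{nq}$ attached to $\mathcal{Q}(\mathbf{LV}_{m+1}\times\mathbf{LV}_2)$, and that the translation of the quasi-identity produces exactly the displayed rule, with premise $j(\varphi\land\neg\varphi)$ and conclusion $\gamma^{j/nq}_n(\psi)$, rather than a merely interderivable variant. The strong maximality itself, once the quasivarieties are correctly matched, is a direct reading of Theorem \ref{minimal}, and the remaining arguments are routine transfers of the $q$-prime base case.
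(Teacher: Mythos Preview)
Your proposal is correct and follows essentially the same approach as the paper: place everything inside the algebraizable base $\mathsf{L}^{j}_{nq}$ via Theorem~\ref{equivsys}, identify $\mathsf{L}^{i}_{n}$, $\bar{\sf L}^{j/nq}_{q}$ and $\bar{\sf L}^{j/nq}_{q^{r+1}}$ with the quasivarieties $\mathcal{Q}(\mathbf{LV}_{n+1})$, $\mathcal{Q}(\mathbf{LV}_{q+1}\times\mathbf{LV}_{2})$ and $\mathcal{Q}(\mathbf{LV}_{q^{r+1}+1}\times\mathbf{LV}_{2})$, read off strong maximality from Theorem~\ref{minimal}, and translate the axiomatization of Theorem~\ref{axiomminimal} using $j\,\varphi\Dashv\vDash_{\mathsf{L}^{j}_{nq}}nq\,\varphi$ (your Lemma~\ref{exp-i-prop} argument). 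Your extra observations---that $k=q$ prime makes $M^{nq}_{n}$ a singleton, and that the intersection of logics corresponds to the join of quasivarieties---are exactly the points the paper leaves implicit.
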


\begin{proof}
Notice that $\mathsf{L}^{i}_{n}=\mathsf{L}^{j/nq}_{n}$ for  every $j$ such that $(i-1)q<j\leq iq$.  Thus,  $\mathsf{L}^{i}_{n}$ is an extension of $\mathsf{L}^{j}_{nq}$. Now, by using the equivalence of Theorem \ref{equivsys} and the algebraizability of $\L_{nq}+1$, the lattice of subquasivarieties of $\mathcal{V}(\mathbf{LV}_{nq+1})$ is dually order isomorphic to the lattice of all the finitary extensions of $\mathsf{L}^{j}_{nq}$. Moreover, $\mathsf{L}^{i}_{n}\cap \bar{\sf L}^{j/nq}_{q}$  and  $\mathsf{L}^{i}_{n}\cap \bar{\sf L}^{j/nq}_{q^{r+1}}$ are the finitary extensions of $\mathsf{L}^{j}_{nq}$ associated to $\mathcal{Q}(\{\mathbf{LV}_{n+1},\mathbf{LV}_{q+1}\times\mathbf{LV}_{2}\})$ and $\mathcal{Q}(\{\mathbf{LV}_{n+1}, \mathbf{LV}_{q^{r+1}+1}\times\mathbf{LV}_{2}\})$, respectively.  Hence, they are strongly maximal with respect to $\mathsf{L}^{i}_{n}$, by Theorem \ref{minimal}.  The axiomatization follows from Theorem \ref{axiomminimal} and the facts  that $j\,\varphi\Dashv\vDash_{\mathsf{L}^{j}_{nq}}nq\ \varphi$
holds for every formula $\varphi$ and that the equation $(nq x)^{nq} = nq x$ is valid in the class $MV_{nq}$.

Finally, the last statement of this theorem follows from Theorem \ref{minimal} and Theorem \ref{equivsys}.
\end{proof}

%
%

\section{An application to ideal paraconsistent logics}  \label{sectIdeal}

As mentioned in Example \ref{EjIdeal}, Arieli et al.\ 
have introduced in  \cite{ArieliAZ11a}  the concept of {\em ideal paraconsistent logics}. We recall here this notion.

\begin{definition} [c.f. \cite{ArieliAZ11a}] \label{IdPar}
Let $L$ be a propositional logic defined over a signature $\Theta$ (with consecuence relation $\vdash_L$) containing at least a unary connective $\neg$ and a binary connective $\to$ such that:
\begin{itemize}
\item[(i)] $L$ is paraconsistent w.r.t.\ $\neg$ (or simply $\neg$-paraconsistent), that is,  there are formulas $\varphi,\psi \in \mathcal{L}(\Theta)$ such that $ \varphi, \neg\varphi \nvdash_L\psi$;
\item[(ii)] $\to$ is an implication for which the deduction-detachment theorem holds in $L$, that is, $\Gamma \cup \{\varphi\} \vdash_L \psi$ iff  $\Gamma \vdash_L \varphi \to\psi$, for every set for formulas $\Gamma \cup \{\varphi, \psi\} \subseteq  \mathcal{L}(\Theta)$.
\item[(ii)] There is a presentation of {\sf CPL} as a matrix logic  $L'=\langle \mathbf{A}, \{1\}\rangle$  over the signature $\Theta$ such that the domain of $\mathbf{A}$ is $ \{0,1\}$, and $\neg$ and $\to$ are interpreted as the usual 2-valued negation and implication of {\sf CPL}, respectively.

\item[(iv)] $L$ is a sublogic of {\sf CPL} in the sense that $\vdash_L \subseteq \; \vdash_{L'}$, that is, $\Gamma \vdash_L \varphi$ implies  $\Gamma \vdash_{L'} \varphi$,  for every set for formulas $\Gamma \cup \{\varphi\} \subseteq  \mathcal{L}(\Theta)$.
\end{itemize}
Then, $L$ is said to be an {\em ideal paraconsistent logic} if it is maximal w.r.t.\ $L'$, and every proper extension of $L$ over $\Theta$ is not $\neg$-paraconsistent.
\end{definition}

An implication connective satisfying the above condition (ii) will be called {\em deductive implication} in the rest of the paper.\footnote{Such an implication is called {\em deductive} in \cite{car:con:mar:07,CF2014} and {\em proper} in \cite{ArieliAZ11a}. }

\

Thus, a $\neg$-paraconsistent logic $L$ with a deductive implication is ideal if it is maximal w.r.t. {\sf CPL} (presented over the signature $\Theta$ of $L$) and, if $L''$ is another logic over $\Theta$ properly containing $L$, with $\Gamma \cup \{\varphi\} \subseteq  \mathcal{L}(\Theta)$ such that $\Gamma \vdash_{L''}\varphi$ but  $\Gamma \nvdash_L \varphi$, then the logic obtained from $L$ by adding $\Gamma / \varphi$ as an inference rule is not $\neg$-paraconsistent.

As already noticed, the logics ${\sf L}_i^n$ with $i/n \leq 1/2$ are paraconsistent. In this section, using the results of the previous sections, we study the status of the logics ${\sf L}^i_n$ in relation to ideal paraconsistency. Namely,
in the following subsection, we will show that the logics of the form ${\sf L}_q^i$, where $q$ is prime and $i/q \leq 1/2$ are ideal paraconsistent,
while in subsection \ref{sectJ4} the special case of  ${\sf L}_3^1$, renamed as ${\sf J}_4$, is analyzed in more detail.

\subsection{The ideal paraconsistent logics ${\sf L}_q^i$} \label{sectLiq}

By combining Proposition~\ref{parLqi} with  Corollary~\ref{maxLqi} we know a logic $\mathsf{L}^i_q$ is $\neg$-paracon\-sis\-tent and  maximal w.r.t.\ {\sf CPL}, provided that $q$ is prime and $i/q \leq 1/2$. From now on we will assume this is the case when referring to a logic $\mathsf{L}^i_q$.

Recall 
that
 $\overline{\sf H}^i_q$ is  the Hilbert calculus obtained from the calculus ${\sf H}^i_q$ for  $\mathsf{L}^i_q$ by adding the $i$-explosion rule $(exp_i)$. Since $\varphi \wedge \neg\varphi \vdash_{{\sf H}^i_q} i(\varphi \wedge \neg\varphi)$, the logic  $\overline{\sf H}^i_q$  is explosive. Then, taking into account Corollary \ref{between},
 it follows that  every proper extension of $\mathsf{L}^i_q$ defined over its signature is either $\bar{\mathsf{L}}^i_q$ or {\sf CPL}, and hence not $\neg$-paraconsistent.

In addition, by Lemma \ref{negFi}, we know there is a definable unary connective ${\sim}^i_q$ such that, for every evaluation $e$,
$e({\sim}^i_q~p)=0$ if $e(p)  \geq i/q$, and  $e({\sim}^i_q~p)=1$ otherwise, for every propositional variable $p$.\footnote{Namely, ${\sim}^i_q~p = \neg\lambda_{i,q}(p)$.} This is a kind of ``classical'' negation defined on ${\sf L}_q^i$. Using this negation, one can define in turn a new implication $\Rightarrow^i_q$ by stipulating $\varphi \Rightarrow^i_q \psi={\sim}^i_q \varphi \lor \psi$. In fact, one can easily check that $\Rightarrow^i_q$ is a deductive implication on  ${\sf L}^i_q$ in the sense of Definition \ref{IdPar} and that
over $\{0,1\}$ it coincides with the classical implication.
All the above considerations lead to the following result.

\begin{proposition} \label{IdealLiq}
Let  $q$ is a prime number, and let $1 \leq i < q$ such that $i/q \leq 1/2$. Then,  $\mathsf{L}^i_q$ is a $(q+1)$-valued ideal paraconsistent logic.\footnote{Strictly speaking, in this claim we implicitly assume that the signature of $\mathsf{L}^i_q$ has been changed by adding the definable implication $\Rightarrow^i_q$ as a primitive connective. }
\end{proposition}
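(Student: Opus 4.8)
The plan is to verify, one by one, the defining clauses of an ideal paraconsistent logic from Definition~\ref{IdPar}, most of which have already been prepared in the preceding sections, and then to concentrate on the genuinely new requirement: that \emph{every} proper extension of $\mathsf{L}^i_q$ fails to be $\neg$-paraconsistent. I would begin by fixing the working signature as the one expanded with the definable implication $\Rightarrow^i_q$ and the definable negation ${\sim}^i_q=\neg\lambda_{i,q}$, stressing at the outset that adding term-definable connectives changes neither the consequence relation nor the lattice of finitary extensions, since the expanded $\algLfin_{q+1}$ is term-equivalent to the original algebra and hence generates a quasivariety with the same subquasivariety lattice; this matters because the ``proper extensions over $\Theta$'' of Definition~\ref{IdPar} must be read in the expanded signature. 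Clause~(i), $\neg$-paraconsistency, is exactly Proposition~\ref{parLqi} under the hypothesis $i/q\le 1/2$; clause~(iv), that $\mathsf{L}^i_q$ is a sublogic of {\sf CPL}, follows from Lemma~\ref{IncMat} with the (suitably expanded) two-element algebra $\mathbf{B}_2$ as the subalgebra with domain $\{0,1\}$; and maximality w.r.t.\ {\sf CPL} is Corollary~\ref{maxLqi}.

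Next I would dispatch clauses (ii) and (iii). For (ii) I would read off the deduction--detachment theorem from the semantics of $\Rightarrow^i_q$: by Lemma~\ref{negFi}, $e({\sim}^i_q\varphi)=1$ when $e(\varphi)\notin F_{i/q}$ and $e({\sim}^i_q\varphi)=0$ when $e(\varphi)\in F_{i/q}$, so $e(\varphi\Rightarrow^i_q\psi)=\max\{e({\sim}^i_q\varphi),e(\psi)\}$ lies in $F_{i/q}$ iff $e(\varphi)\notin F_{i/q}$ or $e(\psi)\in F_{i/q}$; this is precisely the material-implication condition, and the standard matrix argument then yields $\Gamma,\varphi\vDash_{\mathsf{L}^i_q}\psi$ iff $\Gamma\vDash_{\mathsf{L}^i_q}\varphi\Rightarrow^i_q\psi$. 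For (iii), restricting the matrix to the subalgebra with domain $\{0,1\}$ gives designated set $F_{i/q}\cap\{0,1\}=\{1\}$ (since $0<i/q<1$), on which $\neg$ is classical negation and $\Rightarrow^i_q$ is classical implication; hence $L'=\langle\mathbf{B}_2^{\ast},\{1\}\rangle$ is a presentation of {\sf CPL} over the same signature, as required.

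The heart of the argument, and the main obstacle, is the clause that every proper extension of $\mathsf{L}^i_q$ be non-$\neg$-paraconsistent. Here I would invoke the algebraic machinery of Section~\ref{Joan}: by Theorem~\ref{equivsys} and the algebraizability of $\L_{q+1}$, the finitary extensions of $\mathsf{L}^i_q$ are dually isomorphic to the subquasivarieties of $MV_q=\mathcal{V}(\algLfin_{q+1})$, and by Theorem~\ref{Wcritical} together with Lemma~\ref{distinct} these form the three-element chain $\mathcal{Q}(\mathbf{LV}_2)\subsetneq\mathcal{Q}(\mathbf{LV}_2\times\mathbf{LV}_{q+1})\subsetneq\mathcal{V}(\algLfin_{q+1})$, exactly as established in the proof of Theorem~\ref{final-simple} and recorded in Corollary~\ref{between}. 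Consequently $\mathsf{L}^i_q$ has exactly two proper extensions, $\bar{\sf L}^i_q$ and {\sf CPL}, both of which lie below {\sf CPL}.

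It then remains only to observe that both of these are explosive, hence non-$\neg$-paraconsistent: {\sf CPL} trivially, and $\bar{\sf L}^i_q=\overline{\sf H}^i_q$ because $\varphi\land\neg\varphi\vdash_{{\sf H}^i_q}i(\varphi\land\neg\varphi)$ and the rule $(exp_i)$ then yields $\bot$, so that $\varphi,\neg\varphi\vdash_{\overline{\sf H}^i_q}\psi$ for every $\psi$. Combining this with the maximality recorded in the first paragraph shows that $\mathsf{L}^i_q$ is an ideal paraconsistent logic, with $q+1$ values. The delicate point throughout is precisely the interplay flagged above: one must ensure that ``proper extension over $\Theta$'' is interpreted as a finitary extension so that the quasivariety correspondence applies, and that the signature enrichment by $\Rightarrow^i_q$ leaves the extension lattice intact; once these are secured, no extension can escape the three-element chain and the result follows.
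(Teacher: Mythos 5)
Your proof is correct and follows essentially the same route as the paper: paraconsistency from Proposition~\ref{parLqi}, maximality from Corollary~\ref{maxLqi}, the deductive implication $\Rightarrow^i_q$ built from ${\sim}^i_q=\neg\lambda_{i,q}$ via Lemma~\ref{negFi}, and the non-paraconsistency of every proper extension from Corollary~\ref{between} together with the explosiveness of $\bar{\sf L}^i_q$. The only difference is that you spell out explicitly (and usefully) the signature-expansion and term-equivalence point that the paper relegates to a footnote, and the semantic verification of the deduction--detachment theorem that the paper leaves as ``one can easily check.''
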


Therefore we have a large family of examples of ideal paraconsistent logics. In particular, for each prime $q$, all the logics in the set $PC_{q+1} = \{ \mathsf{L}^i_q \ : \  i < q/2 \}$ are $(q+1)$-valued ideal paraconsistent logics. Moreover, if we consider ``the more theorems a paraconsistent logic has, the more well-behaved is the logic'' as a valid further criterion, then we can still refine the set $PC_{q+1}$. Indeed, if we denote by $Th(L)$ the set of theorems of a logic $L$ then, as noticed in Remark~\ref{Lqi-indist}, we have the strict inclusions $Th(\mathsf{L}^i_q) \subsetneq Th(\mathsf{L}^j_q) \subsetneq Th({\sf CPL})$ whenever $i > j$. Therefore the logic ${\sf J}_{q+1} = \mathsf{L}^1_q$ appears to be the ``best'' ideal logic in the set $PC_{q+1}$,\footnote{We have chosen the name ${\sf J}_{q+1}$ to denote the logic $\mathsf{L}^1_q$ inspired in the 3-valued case, where the ideal paraconsistent logic ${\sf J}_3$ coincides with $\mathsf{L}^1_2$.} since it is the logic in that set having the biggest set of theorems from classical logic.

Finally, it is worth mentioning that all the paraconsistent logics of the form $\mathsf{L}^i_n$ are, indeed, {\bf LFI}s (recall Section~\ref{recovery}):

\begin{proposition} \label{LFI-Lqi} Suppose that $i/n \leq 1/2$. Then, the logic $\mathsf{L}^i_n$ is an {\bf LFI} w.r.t.\ $\neg$ and where the consistency operator is defined as $\circ \alpha = {\sim}^i_n (\alpha \wedge \neg \alpha)$.
\end{proposition}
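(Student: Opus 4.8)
The plan is to verify directly from the matrix semantics the two defining conditions of an {\bf LFI} for $\mathsf{L}^i_n$. The first condition, paraconsistency with respect to $\neg$, is immediate: Proposition~\ref{parLqi} states that $\mathsf{L}^i_n$ is $\neg$-paraconsistent exactly when $i/n \leq 1/2$, which is precisely the standing hypothesis. Hence there already exist formulas $\alpha,\beta$ with $\alpha,\neg\alpha \nvDash_{\mathsf{L}^i_n}\beta$, and the whole task reduces to checking the gentle explosion condition for the proposed consistency operator $\circ\alpha = {\sim}^i_n(\alpha \wedge \neg\alpha)$.

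For that second condition I would show that the premise set $\{\alpha,\neg\alpha,\circ\alpha\}$ is simply unsatisfiable in the matrix $\langle \algLfin_{n+1},F_{i/n}\rangle$, so that it entails every formula $\beta$ vacuously. Recall from Lemma~\ref{negFi} (and the definition ${\sim}^i_n p = \neg\lambda_{i,n}(p)$) that ${\sim}^i_n$ is the definable connective whose truth function satisfies $e({\sim}^i_n\delta)=0$ if $e(\delta)\geq i/n$ and $e({\sim}^i_n\delta)=1$ otherwise, for every evaluation $e$ in $\algLfin_{n+1}$. Unfolding the definition of $\circ$, this gives $e(\circ\alpha)=0$ precisely when $e(\alpha\wedge\neg\alpha)\geq i/n$, and $e(\circ\alpha)=1$ otherwise.

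The key computation is then a single line. Suppose an evaluation $e$ designates the first two premises, i.e.\ $e(\alpha)\in F_{i/n}$ and $e(\neg\alpha)\in F_{i/n}$. Since $e(\neg\alpha)=1-e(\alpha)$ and $\wedge$ is interpreted as $\min$, we have $e(\alpha\wedge\neg\alpha)=\min\{e(\alpha),\,1-e(\alpha)\}$; and as both $e(\alpha)\geq i/n$ and $1-e(\alpha)\geq i/n$, the minimum is again $\geq i/n$. Thus the argument of ${\sim}^i_n$ lies in $F_{i/n}$, forcing $e(\circ\alpha)=0\notin F_{i/n}$. Consequently no evaluation can simultaneously designate $\alpha$, $\neg\alpha$ and $\circ\alpha$, so $\{\alpha,\neg\alpha,\circ\alpha\}\vDash_{\mathsf{L}^i_n}\beta$ for every $\beta$. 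Together with paraconsistency, this shows $\mathsf{L}^i_n$ is an {\bf LFI} with consistency operator $\circ$.

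I do not expect a genuine obstacle here: the entire content is the observation that the minimum of two values each at least $i/n$ is again at least $i/n$, which pins $\circ\alpha$ to the non-designated value $0$ on every ``inconsistent'' evaluation. The only points requiring minor care are to phrase the entailment as \emph{vacuous} satisfaction (the premise set has no models) rather than trying to derive $\beta$ from an actual model, and to apply Lemma~\ref{negFi} with the correct threshold $i/n$ so that the characteristic behaviour of ${\sim}^i_n$ is the one used above.
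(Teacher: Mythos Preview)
Your proposal is correct and is precisely the straightforward matrix verification the paper alludes to (its own proof reads simply ``Straightforward''). You invoke Proposition~\ref{parLqi} for paraconsistency and then check the gentle explosion by showing $\{\alpha,\neg\alpha,\circ\alpha\}$ has no model in $\langle\algLfin_{n+1},F_{i/n}\rangle$, which is exactly the intended argument.
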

\begin{proof}
Straightforward.
\end{proof}

\subsection{The four-valued ideal paraconsistent logic ${\sf J}_4$} \label{sectJ4}

As mentioned in Remark \ref{axiomHqi}, we know from Theorem 4.3 in \cite{blok:pig:01} that it is possible to obtain a standard (that is, without ``global'' inference rules) Hilbert calculus for a logic $\mathsf{L}^i_n$ for $i< n$ from the usual one for $\L_{n+1}$ by using translations. However, the calculi obtained in this manner can lack an intuitive meaning since they are defined in terms of the implication connective $\to$ of $\L_{n+1}$, that is naturally associated to the filter $F_1 =\{1\}$ but not
to the filter $F_{i/n} = \{i/n, \ldots, 1\}$, which is the one at work in ${\sf L}^i_n$.  Actually, the implication naturally associated to the filter $F_{i/n}$ is  $\Rightarrow^i_n$, which was considered above, for which modus ponens (MP) and the deduction-detachament theorem hold.

In this section we focus on the particular case of the (ideal paraconsistent) logic ${\sf J}_4 = {\sf L}^1_3$. ${\sf J}_4$ can be considered as a generalization to four values of the paraconsistent 3-valued logic ${\sf J}_3$ introduced by da Costa and D'Ottaviano in \cite{dot:dac:70} and briefly mentioned in Example~\ref{ExL3}.
For this logic a more natural signature $\Sigma$ will be
considered for describing it axiomatically in terms
of a deductive implication connective (in the sense of Definition~\ref{IdPar} item~(ii)) and a unary connective $*$  representing the square operation $x \otimes x$, which can be seen as a kind of `truth stresser' (see e.g. \cite{Ha01c}).
A soundness and completeness result  for this calculus proved by using a `separation' technique for truth-values will be presented. Note that dealing with logics ${\sf J}_q = \mathsf{L}^1_q$  for a prime $q>3$ appears to be much more complicated, and certainly it lies outside the scope of this paper.

The signature $\Sigma$ that will be used in the rest of the section is given by two unary connectives ${\ast}$ (square) and $\neg$ (negation), plus a binary connective  $\vee$ for disjunction. Abusing the notation, we formally define next ${\sf J}_4$ over this signature, and we will show later that it is an equivalent presentation of ${\sf L}^1_3$.

\begin{definition} \label{algA4}
${\sf J}_4$ is the matrix  logic $\langle {\bf A}_4, F_{1/3}\rangle$ over  $\Sigma$, where the algebra is ${\bf A}_4 = (\domLfin_4, \lor, \neg, *)$, with operations defined by the tables below:
$$
\begin{array}{|c||c|c|c|c|}  \hline
  \vee & 1 & 2/3 & 1/3  & 0\\ \hline \hline
  1 & 1 & 1 & 1 & 1 \\ \hline
 2/3 & 1 & 2/3 & 2/3 & 2/3 \\ \hline
  1/3 & 1 & 2/3 & 1/3  & 1/3\\ \hline
  0 & 1 & 2/3 & 1/3  & 0 \\ \hline
\end{array}
\hspace{1 cm}
\begin{array}{|c||c|c|} \hline
   & \neg & {\ast}  \\ \hline \hline
  1 & 0 & 1  \\ \hline
  2/3 & 1/3 & 1/3  \\ \hline
  1/3 & 2/3 & 0  \\ \hline
  0 & 1 & 0 \\ \hline
\end{array}
$$
\end{definition}

Observe that $\neg$ is \L ukasiewicz negation in $\algLfin_4$, while ${\ast}x=x \otimes x$ (with $\otimes$ being {\L}ukasiewicz strong conjunction) and $\vee$ is the lattice join in $\algLfin_4$.

In this signature $\Sigma$ the following derived connectives can be defined (as usual, the corresponding operators will be denoted using the same symbol): \\

\begin{tabular}{ll}
- & $\Delta(p) = {\ast}{\ast} p$ ; \\
- &  ${\sim}p = \Delta( \neg p)$ ; \\
- &  $p \Rightarrow r = {\sim}p \vee r$ ; \\
- &  $p \Leftrightarrow r = (p \Rightarrow r) \wedge (r \Rightarrow p)$ ;  \\
- &  $p \wedge r = \neg(\neg p \vee \neg r)$ ; \\
- &  $\nabla(p)= \neg{\sim}p$; \\
- &  $\alpha_{1/3}(p)=\nabla(p) \wedge {\sim}{\ast} p$; \\
- &  $\beta_{1/3}(p)=\alpha_{1/3}(p) \wedge {\ast}\neg p$.\\ \\
\end{tabular}
\ \\
It is easy to see that $\Delta$ is Monteiro-Baaz {\em Delta-operator}) and $\sim$ is G\"odel negation (${\sim}x=1$ if $x=0$, and $0$ otherwise). Note that $\sim$ actually coincides with ${\sim}^1_3$, and thus $\Rightarrow$ is nothing but $\Rightarrow^1_3$. Furthermore, $\nabla(x)=0$ if $x=0$, and $1$ otherwise; $\alpha_{1/3}(x)=1$ and $\beta_{1/3}(x)=1/3$ if $x=1/3$, and $0$ otherwise.

It is worth to remark that {\L}ukasiewicz implication is definable from these operators in the following way:  $$p \to r = ((\nabla(\neg p) \vee r) \wedge (\neg p \vee \nabla(r)) \wedge \neg \beta_{1/3}(r) )
 \vee (({\sim}p \wedge \alpha_{1/3}(r)) \vee (\alpha_{1/3}(p) \wedge \alpha_{1/3}(r))).$$
Then, the following result follows easily:

\begin{proposition} \label{algA4=algL4}
The algebras $\algLfin_4$ and ${\bf A}_4$ are functionally equivalent.
\end{proposition}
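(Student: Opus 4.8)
The plan is to prove the two algebras functionally equivalent in the standard way: since ${\bf A}_4$ and $\algLfin_4$ share the same finite universe $\domLfin_4=\{0,1/3,2/3,1\}$, it suffices to show that they generate the same clone of term operations, and for this it is enough to check that each fundamental operation of either algebra is a term operation of the other. The argument therefore splits into two inclusions, each of which amounts to exhibiting explicit definitions.

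For the inclusion stating that every operation of ${\bf A}_4$ is $\algLfin_4$-definable, I would simply record the identities already noted just before the statement: $\neg$ is common to both signatures, the lattice join satisfies $x\vee y=(x\to y)\to y$, and the square operation satisfies $\ast x=x\otimes x=\neg(x\to\neg x)$. Thus the three fundamental operations $\vee,\neg,\ast$ of ${\bf A}_4$ are all \L ukasiewicz terms, giving one inclusion at once.

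For the converse inclusion I must define the two fundamental operations of $\algLfin_4$ as $\Sigma$-terms. Again $\neg$ is shared, so the whole burden falls on \L ukasiewicz implication $\to$, and here I would invoke the explicit $\Sigma$-term for $p\to r$ displayed immediately before the proposition, whose ingredients $\Delta,{\sim},\nabla,\wedge,\alpha_{1/3},\beta_{1/3}$ were all previously defined over $\Sigma$. The verification has two stages. First, compute the unary truth tables of these auxiliary connectives directly from the tables of $\neg$ and $\ast$, obtaining $\Delta(x)=1$ iff $x=1$; ${\sim}x=1$ iff $x=0$; $\nabla(x)=0$ iff $x=0$; $\alpha_{1/3}(x)=1$ if $x=1/3$ and $0$ otherwise; and $\beta_{1/3}(x)=1/3$ if $x=1/3$ and $0$ otherwise. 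Second, substitute these values into the displayed expression and evaluate it on all sixteen pairs $(p,r)\in\domLfin_4^2$, confirming that it equals $\min\{1,1-p+r\}$ in every case.

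The only real obstacle is this sixteen-case evaluation, which is purely routine but tedious because of the nested meets and joins; the cleanest way to carry it out is to split on the value of $r$. When $r\neq 1/3$ the expression collapses, since $\neg\beta_{1/3}(r)=1$ removes the cap and $\alpha_{1/3}(r)=0$ kills the second disjunct, leaving $p\to r=(\nabla(\neg p)\vee r)\wedge(\neg p\vee\nabla(r))$, which one checks equals $\min\{1,1-p+r\}$ using only $\nabla(\neg p)=0\Leftrightarrow p=1$ and $\nabla(r)=0\Leftrightarrow r=0$. When $r=1/3$, the factor $\neg\beta_{1/3}(1/3)=2/3$ correctly caps the answer for $p\in\{1,2/3\}$, while the disjunct $({\sim}p\wedge\alpha_{1/3}(r))\vee(\alpha_{1/3}(p)\wedge\alpha_{1/3}(r))$ forces the value $1$ exactly for $p\in\{0,1/3\}$, matching $p\to 1/3$ in all four cases. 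With both inclusions established, the clones of term operations coincide, so $\algLfin_4$ and ${\bf A}_4$ are functionally equivalent.
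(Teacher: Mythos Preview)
Your proposal is correct and follows essentially the same approach as the paper: the paper does not give a separate proof but relies on the explicit definitions displayed immediately before the proposition (the $\algLfin_4$-definitions of $\vee,\neg,\ast$ and the $\Sigma$-term for $\to$), treating the result as an easy consequence. You have simply spelled out the routine sixteen-case verification of the $\to$-formula that the paper leaves implicit, and your case split on $r=1/3$ versus $r\neq 1/3$ is an efficient way to organize it.
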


This means that the proposed operators over $\Sigma$ constitute an alternative presentation of the algebra $\algLfin_4$ underlying $\L_4$. Next we define  an axiomatic system for ${\sf J}_4$.

\begin{definition}  \label{calH4} The Hilbert  calculus  ${\sf H}_4$  for the logic ${\sf J}_4$, defined over the signature $\Sigma$, is given as follows:\\[2mm]
  {\em Axiom schemas:} those of {\sf CPL} over the signature $\{\vee, \Rightarrow, {\sim}\}$ plus
  \begin{itemize}
\item[(Ax1)]    $\neg{\sim}\alpha \Rightarrow \alpha$ \vspace{-0.2cm}
\item[(Ax2)]    $\alpha \vee \neg \alpha$ \vspace{-0.2cm}
\item[(Ax3)]   $\neg\neg\alpha \Leftrightarrow \alpha$ \vspace{-0.2cm}
\item[(Ax4)]    $\neg(\alpha \vee \beta) \Rightarrow \neg \alpha$ \vspace{-0.2cm}
\item[(Ax5)]    $\neg(\alpha \vee \beta) \Rightarrow \neg \beta$ \vspace{-0.2cm}
\item[(Ax6)]    $\neg \alpha \Rightarrow (\neg\beta \Rightarrow \neg(\alpha \vee \beta))$ \vspace{-0.2cm}
\item[(Ax7)]    ${\ast}\alpha \Rightarrow \alpha$ \vspace{-0.2cm}
\item[(Ax8)]    ${\ast}(\alpha \vee \neg \alpha)$ \vspace{-0.2cm}
\item[(Ax9)]    ${\ast}\alpha \Rightarrow {\sim}{\ast}\neg\alpha$ \vspace{-0.2cm}
\item[(Ax10)]    ${\ast}{\ast}\alpha \Leftrightarrow {\sim}\neg \alpha$ \vspace{-0.2cm}
\item[(Ax11)]    $\neg {\ast}\alpha \Leftrightarrow \neg\alpha$ \vspace{-0.2cm}
\item[(Ax12)]    ${\ast}(\alpha \vee \beta) \Leftrightarrow ({\ast}\alpha \vee {\ast}\beta)$
\end{itemize}
\noindent {\em Inference rule:}
  \begin{itemize}
\item[(MP)] $\displaystyle\frac{\alpha \ \ \ \ \alpha\Rightarrow
      \beta}{\beta}$
\end{itemize}
\end{definition}
Observe that, since (MP) is the only inference rule, ${\sf H}_4$ satisfies the deduction-detachment theorem w.r.t. the implication $\Rightarrow$: $\Gamma \cup\{ \alpha\} \vdash_{{\sf H}_4} \beta$ iff $\Gamma \vdash_{{\sf H}_4}  \alpha \Rightarrow \beta$, for every set of formulas $\Gamma \cup \{\alpha,\beta\}$. {On the other hand, it can be proved that $*(\alpha\Rightarrow \beta) \Rightarrow (*\alpha \Rightarrow *\beta)$ is derivable in ${\sf H}_4$, which gives additional support to consider $*$ as a truth stresser.}
Soundness of ${\sf H}_4$ can be proved straightforwardly.

\begin{proposition} [Soundness of ${\sf H}_4$] The calculus ${\sf H}_4$ is sound w.r.t. ${\sf J}_4$, that is: $\Gamma \vdash_{{\sf H}_4} \varphi$ implies that $\Gamma \vDash_{\sf J_4} \varphi$, for every finite set of formulas $\Gamma \cup \{\varphi\}$.
\end{proposition}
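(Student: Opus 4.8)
The plan is to prove soundness by the standard induction on the length of a derivation, so it suffices to check two things: (a) every axiom instance of ${\sf H}_4$ receives a designated value (i.e.\ a value $\geq 1/3$) under every homomorphism $e : \mathcal{L}(\Sigma) \to {\bf A}_4$, and (b) the rule (MP) preserves designated values, i.e.\ if $e(\alpha) \in F_{1/3}$ and $e(\alpha \Rightarrow \beta) \in F_{1/3}$ then $e(\beta) \in F_{1/3}$. Granting (a) and (b), given a derivation $\varphi_1, \dots, \varphi_k = \varphi$ of $\varphi$ from $\Gamma$ and any $e$ with $e[\Gamma] \subseteq F_{1/3}$, a trivial induction on the index $j$ shows $e(\varphi_j) \in F_{1/3}$ for all $j$ (each $\varphi_j$ is an axiom, a hypothesis in $\Gamma$, or a consequence of earlier lines by (MP)), whence $\Gamma \vDash_{\sf J_4} \varphi$.

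First I would record the behaviour of the derived connectives ${\sim}$, $\vee$ and $\Rightarrow$ with respect to the filter $F_{1/3} = \{1/3, 2/3, 1\}$. From the tables one computes that ${\sim}x = 1$ if $x = 0$ and ${\sim}x = 0$ otherwise, so ${\sim}x \in F_{1/3}$ iff $x \notin F_{1/3}$; that $x \vee y = \max\{x,y\} \in F_{1/3}$ iff $x \in F_{1/3}$ or $y \in F_{1/3}$; and hence that $x \Rightarrow y = {\sim}x \vee y \in F_{1/3}$ iff $x \notin F_{1/3}$ or $y \in F_{1/3}$. Consequently the map $v : A_4 \to \{0,1\}$ defined by $v(x) = 1$ iff $x \in F_{1/3}$ is a homomorphism from the $\{\vee, {\sim}, \Rightarrow\}$-reduct of ${\bf A}_4$ onto the two-element Boolean algebra ${\bf B}_2$, with designated elements being exactly those sent to $1$. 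Therefore, for any CPL-tautology $\varphi$ over $\{\vee, \Rightarrow, {\sim}\}$ and any $e$, the composite $v \circ e$ is a Boolean valuation, so $v(e(\varphi)) = 1$, i.e.\ $e(\varphi) \in F_{1/3}$; this disposes of all the ${\sf CPL}$-axioms of ${\sf H}_4$ at once. The same two facts settle (b): if $e(\alpha) \in F_{1/3}$ then ${\sim}e(\alpha) = 0 \notin F_{1/3}$, so $e(\alpha \Rightarrow \beta) = {\sim}e(\alpha) \vee e(\beta) = e(\beta)$, which must then lie in $F_{1/3}$.

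The hard part --- really the only laborious part --- will be verifying the twelve specific axioms (Ax1)--(Ax12) directly from the truth tables; since each depends on at most two variables, this amounts to a finite case check over the $4$ or $16$ possible value assignments and involves no subtlety, which is why the claim can be called straightforward. As a representative sample, for (Ax7) ${\ast}\alpha \Rightarrow \alpha$ one uses that ${\ast}x \neq 0$ only when $x \in \{2/3, 1\} \subseteq F_{1/3}$: if $e({\ast}\alpha) = 0$ then ${\sim}e({\ast}\alpha) = 1$ and the implication evaluates to $1$; otherwise $e(\alpha) \geq 2/3$ and ${\sim}e({\ast}\alpha) \vee e(\alpha) = e(\alpha) \in F_{1/3}$. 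The remaining axioms are checked in the same mechanical fashion, using $\Delta x = {\ast}{\ast}x$, the tables for $\neg$ and ${\ast}$, and the membership criteria for $\vee$ and $\Rightarrow$ established above. With (a) and (b) in hand, the induction described in the first paragraph completes the proof.
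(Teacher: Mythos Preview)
Your proof is correct and follows exactly the intended route: the paper's own proof consists of the single word ``Straightforward,'' and what you have written is precisely the routine verification (induction on derivations, checking that each axiom is designated and that (MP) preserves designated values) that the authors leave implicit. Your observation that the map $v(x)=1$ iff $x\in F_{1/3}$ is a $\{\vee,\Rightarrow,{\sim}\}$-homomorphism onto ${\bf B}_2$ is a clean way to handle all the {\sf CPL} axioms at once, and the remaining case analysis for (Ax1)--(Ax12) is indeed mechanical.
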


In order to prove completeness, since ${\sf H}_4$ is a finitary Tarskian logic, one can use the technique of maximal consistent sets of formulas. Indeed, for any set of formulas $\Gamma \cup \{\varphi \}$, if $\Gamma \nvdash_{{\sf H}_4} \varphi$ then, by Lindenbaum-{\L}os theorem, $\Gamma$ can be extended to a maximal set  $\Lambda$ such that $\Lambda  \nvdash_{{\sf H}_4} \varphi$. We will call the set $\Lambda$ {\em maximal \ntwrt\varphi}  in  ${\sf H}_4$.
Maximal sets w.r.t.\ a formula enjoy remarkable properties which directly follow from the axioms and rules of ${\sf H}_4$.

\begin{proposition}\label{MaxJ4}
Let $\Lambda$ be a maximal set \ntwrt\varphi\  in  ${\sf H}_4$. Then, $\Lambda$ is closed, i.e. for every formula $\psi$, $\Lambda \vdash\psi$ iff $\psi\in\Lambda$. Moreover, for any formulas $\alpha$ and $\beta$ the following conditions hold: \vspace{0.2cm}

\begin{tabular}{rl}
(1) &$\alpha \vee \beta \in \Lambda$ iff $\alpha \in \Lambda$ or $\beta \in \Lambda$;\\
(2) &$\alpha \not\in\Lambda$ iff ${\sim}\alpha \in\Lambda$;\\
(3) & $\alpha \Rightarrow \beta \in \Lambda$ iff $\alpha \not\in \Lambda$ or $\beta \in \Lambda$;\\
(4) &$\alpha \not\in\Lambda$ implies $\neg\alpha \in\Lambda$;\\
(5) & $\alpha \in\Lambda$ iff $\neg\neg\alpha \in\Lambda$;\\
(6) & $\neg{\sim}\alpha \in\Lambda$ implies $\alpha \in\Lambda$;\\
(7) & $\neg(\alpha \vee \beta) \in \Lambda$ iff $\neg\alpha \in \Lambda$ and $\neg\beta \in \Lambda$;\\
(8) & ${\ast}\alpha \in\Lambda$ implies $\alpha \in\Lambda$;\\
(9) & ${\ast}(\alpha \vee \beta) \in \Lambda$ iff ${\ast}\alpha \in \Lambda$ or ${\ast}\beta \in \Lambda$;\\
(10) & ${\ast}{\ast}\alpha\in\Lambda$ iff $\neg\alpha  \not\in\Lambda$;\\
(11) & $\neg{\ast}\alpha \in\Lambda$ iff $\neg\alpha \in\Lambda$;\\
(12) & ${\ast}\alpha \not\in\Lambda$ iff ${\ast}\neg\alpha \in\Lambda$.
\end{tabular}
\end{proposition}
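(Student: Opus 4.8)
The plan is to lean on two facts about ${\sf H}_4$: that (MP) is the only rule, so that the deduction–detachment theorem (DDT) for $\Rightarrow$ holds, and that the axioms contain all of classical logic in the fragment $\{\vee,\Rightarrow,{\sim}\}$. First I would settle closure. If $\Lambda\vdash_{{\sf H}_4}\psi$ but $\psi\notin\Lambda$, then maximality of $\Lambda$ forces $\Lambda,\psi\vdash_{{\sf H}_4}\varphi$, and transitivity of $\vdash_{{\sf H}_4}$ gives $\Lambda\vdash_{{\sf H}_4}\varphi$, contradicting that $\Lambda$ is non-trivial with respect to $\varphi$; hence $\Lambda\vdash_{{\sf H}_4}\psi$ iff $\psi\in\Lambda$. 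From here every item is read as a statement about membership in $\Lambda$.

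Next I would prove the classical core (1)--(3), which only involves the fragment $\{\vee,\Rightarrow,{\sim}\}$. For (1), membership of $\alpha$ or of $\beta$ yields $\alpha\vee\beta\in\Lambda$ by disjunction introduction and closure, while the converse uses DDT: from $\alpha\notin\Lambda$ and $\beta\notin\Lambda$ maximality gives $\Lambda\vdash_{{\sf H}_4}\alpha\Rightarrow\varphi$ and $\Lambda\vdash_{{\sf H}_4}\beta\Rightarrow\varphi$, and classical proof-by-cases from $\alpha\vee\beta$ derives $\varphi$, a contradiction. Item (2) is the place where ${\sim}$ must act as a genuine classical negation: its ``only if'' half follows from the classical theorem $\alpha\vee{\sim}\alpha$ together with (1), and its ``if'' half from classical explosion $\alpha,{\sim}\alpha\vdash_{{\sf H}_4}\varphi$. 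Item (3) follows from (MP) in one direction and from (2) plus the classical tautologies ${\sim}\alpha\Rightarrow(\alpha\Rightarrow\beta)$ and $\beta\Rightarrow(\alpha\Rightarrow\beta)$ in the other. After this step $\Lambda$ behaves on $\{\vee,\Rightarrow,{\sim}\}$ exactly like a two-valued valuation whose positive part is complemented by ${\sim}$.

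I would then handle the properties of $\neg$ and ${\ast}$ by matching each one to the axiom(s) that encode it, always through (MP) and closure. Those coming from a plain implicational axiom are immediate: (4) follows from (Ax2) via (1); (6) from (Ax1); (8) from (Ax7); and (7) from (Ax4) and (Ax5) for its ``only if'' direction together with (Ax6) for its ``if'' direction. The remaining items rest on a single auxiliary principle of \emph{biconditional transfer}: if $\vdash_{{\sf H}_4}A\Leftrightarrow B$ then $A\in\Lambda$ iff $B\in\Lambda$. Granting it, (5) comes from (Ax3), (11) from (Ax11), (10) from (Ax10) combined with (2), and (9) from (Ax12) combined with (1); finally (12) is obtained by combining (Ax8) and (Ax12), which force ${\ast}\alpha\in\Lambda$ or ${\ast}\neg\alpha\in\Lambda$, with (Ax9), which forbids both, so that exactly one of ${\ast}\alpha,{\ast}\neg\alpha$ lies in $\Lambda$.

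The hard part will be justifying biconditional transfer, and this is exactly where the two negations interact. Since $\Leftrightarrow$ and $\wedge$ are defined through the \emph{involutive} {\L}ukasiewicz negation $\neg$ rather than through the classical ${\sim}$, the classical extraction ``$A\Leftrightarrow B\vdash A\Rightarrow B$'' is not automatic from the $\{\vee,\Rightarrow,{\sim}\}$-axioms. The structural reason it still holds is that the designated filter $F_{1/3}$ of ${\bf A}_4$ is precisely the set of nonzero values, so that $\Lambda$-membership is Boolean even though ${\bf A}_4$ is four-valued; in particular $\mu\wedge\nu=\neg(\neg\mu\vee\neg\nu)$ should satisfy $\mu\wedge\nu\in\Lambda$ iff $\mu\in\Lambda$ and $\nu\in\Lambda$, after which $A\Leftrightarrow B\in\Lambda$ splits into its two implicational conjuncts and (3) closes the transfer. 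To keep this non-circular I would first record, as a preliminary lemma, the double-negation and conjunction-elimination theses needed (such as $\neg\neg\alpha\Rightarrow\alpha$ and $(\mu\wedge\nu)\Rightarrow\mu$, all valid by the already-established soundness of ${\sf H}_4$), obtained via (Ax2)--(Ax6), and only then derive (5), (7) and (9)--(11). Fixing this ordering so that the conjunction-membership lemma does not covertly presuppose (5) is the one genuinely delicate point; everything else reduces to routine bookkeeping with (MP), closure, and the classical fragment.
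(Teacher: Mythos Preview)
The paper itself gives no proof of this proposition: the text only says that these properties ``directly follow from the axioms and rules of ${\sf H}_4$'' and passes straight to the Truth Lemma. Your outline is therefore considerably more detailed than anything the authors supply, and its architecture---closure first, then the classical core (1)--(3) via the $\{\vee,\Rightarrow,{\sim}\}$-fragment and DDT, then the $\neg$- and ${\ast}$-items matched to their axioms---is exactly the natural route and almost certainly what the authors had in mind.

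You are right to isolate biconditional transfer as the one genuinely delicate point, but your proposed resolution does not quite close the gap. The parenthetical ``all valid by the already-established soundness of ${\sf H}_4$'' is a non-sequitur: soundness gives validity \emph{from} provability, not conversely, and invoking it here would presuppose the very completeness the proposition is meant to support. More seriously, the claim that $\neg\neg\alpha\Rightarrow\alpha$ and $(\mu\wedge\nu)\Rightarrow\mu$ can be ``obtained via (Ax2)--(Ax6)'' is precisely what is at issue: (Ax3) is the only axiom governing double $\neg$, and it is itself a $\Leftrightarrow$. Applying (Ax4)/(Ax5) and (MP) to (Ax3) yields only $\neg\neg(\neg\neg\alpha\Rightarrow\alpha)\in\Lambda$, so you are back to needing double-$\neg$ elimination for formulas of implication form, and the ordering you warn against (``the conjunction-membership lemma does not covertly presuppose (5)'') has not actually been secured. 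To break the cycle cleanly you should argue at the level of \emph{membership in $\Lambda$} rather than theoremhood: after (1)--(4), (6)--(8) are in place, show directly that $\neg\neg\gamma\in\Lambda$ forces $\gamma\in\Lambda$ by combining (2), (Ax1), (Ax7), (Ax8) and (Ax9) (the last three are pure implications, so no $\Leftrightarrow$-extraction is needed), and only then read off (5), the conjunction-membership lemma, and the remaining biconditional items.
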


Next we prove a Truth Lemma for ${\sf H}_4$.

\begin{lemma} [Truth Lemma for ${\sf H}_4$] \label{TL-J4} Let $\Lambda$ be a maximal set of formulas \ntwrt\varphi\  in  ${\sf H}_4$. Consider the following evaluation $e_\Lambda$ of propositional variables for ${\sf J}_4$:
$$(T) \hspace*{2cm}
e_\Lambda(\gamma) = \left\{\begin{array}{rl}
1           & \mbox{iff}\quad \gamma \in \Lambda, \mbox{and} \;\neg \gamma \not\in \Lambda\\[1mm]
2/3 & \mbox{iff}\quad  \gamma \in \Lambda,  \;\neg \gamma \in \Lambda, \mbox{and} \; {\ast} \gamma \in \Lambda\\[1mm]
1/3 & \mbox{iff}\quad  \gamma \in \Lambda,  \;\neg \gamma \in \Lambda, \mbox{and} \; {\ast} \gamma \not\in \Lambda\\[1mm]
0           & \mbox{iff}\quad  \gamma \not\in \Lambda.
\end{array}\right.
$$Then,  (T) holds for every complex formula $\gamma$.
\end{lemma}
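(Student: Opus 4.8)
The plan is to prove the lemma by structural induction on $\gamma$. The immediate difficulty is that the statement (T) for a formula $\gamma$ is phrased in terms of the membership in $\Lambda$ of $\gamma$, $\neg\gamma$ and ${\ast}\gamma$, and the latter two are \emph{not} subformulas of $\gamma$; hence (T) cannot be fed straight back as an induction hypothesis on proper subformulas. To get around this I would first reformulate (T) as the conjunction of three equivalences which, by inspecting the tables for $\neg$ and ${\ast}$ in Definition~\ref{algA4} and recalling that $F_{1/3}=\{1/3,2/3,1\}$, is equivalent to it: for every formula $\gamma$, \textbf{(E1)} $\gamma\in\Lambda$ iff $e_\Lambda(\gamma)\neq 0$; \textbf{(E2)} $\neg\gamma\in\Lambda$ iff $e_\Lambda(\gamma)\neq 1$; \textbf{(E3)} ${\ast}\gamma\in\Lambda$ iff $e_\Lambda(\gamma)\in\{2/3,1\}$. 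Indeed $\neg x=0$ iff $x=1$, and ${\ast}x=0$ iff $x\in\{0,1/3\}$, so (E1)--(E3) record exactly which of the four truth values $e_\Lambda(\gamma)$ takes, matching the four clauses of (T). I would then prove (E1), (E2) and (E3) \emph{simultaneously} by induction; carrying the three of them together is what makes the induction close, since, for instance, establishing (E1) for $\neg\delta$ requires (E2) for $\delta$.

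Throughout I would use that $e_\Lambda$ is a homomorphism into ${\bf A}_4$, so that $e_\Lambda(\neg\delta)=\neg e_\Lambda(\delta)$, $e_\Lambda({\ast}\delta)={\ast}e_\Lambda(\delta)$ and $e_\Lambda(\delta_1\vee\delta_2)=\max(e_\Lambda(\delta_1),e_\Lambda(\delta_2))$; this reduces the right-hand sides of (E1)--(E3) to arithmetic in ${\bf A}_4$. The base case $\gamma=p$ holds essentially by the definition (T) of $e_\Lambda$ on variables, with (E2) and (E3) requiring the closure properties of $\Lambda$ from Proposition~\ref{MaxJ4}: that $p\notin\Lambda$ forces $\neg p\in\Lambda$ (clause~(4)), and that $p\in\Lambda$ together with $\neg p\notin\Lambda$ forces ${\ast}p\in\Lambda$ (if ${\ast}p\notin\Lambda$ then ${\ast}\neg p\in\Lambda$ by clause~(12), whence $\neg p\in\Lambda$ by clause~(8), a contradiction).

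For the inductive steps I would treat one connective at a time, each time invoking the matching clause of Proposition~\ref{MaxJ4} together with the induction hypothesis. For $\gamma=\delta_1\vee\delta_2$: (E1) uses clause~(1), (E2) uses clause~(7), and (E3) uses clause~(9), matched against the facts that $\max(x,y)\neq 0$ iff $x\neq 0$ or $y\neq 0$, that $\max(x,y)\neq 1$ iff $x\neq 1$ and $y\neq 1$, and that $\max(x,y)\in\{2/3,1\}$ iff $x\in\{2/3,1\}$ or $y\in\{2/3,1\}$. For $\gamma=\neg\delta$: (E1) is exactly (E2) for $\delta$ read through $e_\Lambda(\neg\delta)=\neg e_\Lambda(\delta)$; (E2) uses clause~(5); and (E3) uses clause~(12). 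For $\gamma={\ast}\delta$: (E1) is exactly (E3) for $\delta$; (E2) uses clause~(11); and (E3) uses clause~(10). In every case the verification is a short comparison between a Boolean combination of membership facts (supplied by Proposition~\ref{MaxJ4} and the hypotheses on $\delta$) and the value of the corresponding operation in ${\bf A}_4$.

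I expect the only real work to be organizational rather than conceptual: the crux is recognizing that (T) must be repackaged as (E1)--(E3) and proved as a single joint induction, so that the behavior of $\Lambda$ under $\neg$ and ${\ast}$ on subformulas is available exactly when it is needed. Once that strengthening is in place, each connective case is a routine check against the tables of Definition~\ref{algA4} and the corresponding clause of Proposition~\ref{MaxJ4}; the only potentially delicate points are the base-case sub-cases where (T) does not itself mention ${\ast}p$ (the value $e_\Lambda(\gamma)=1$), which are resolved by clauses~(8) and~(12) as above.
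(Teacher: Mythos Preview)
Your proposal is correct and takes essentially the same approach as the paper: induction on the complexity of $\gamma$, handling each connective via the corresponding clauses of Proposition~\ref{MaxJ4} and the truth tables of Definition~\ref{algA4}. Your repackaging of (T) as the three equivalences (E1)--(E3) is simply an explicit unpacking of what the paper's sketch (``each item of Proposition~\ref{MaxJ4} should be used \ldots\ details left to the reader'') would have the reader do; the ``difficulty'' you flag about $\neg\gamma$ and ${\ast}\gamma$ not being subformulas is real but already absorbed by the paper's induction, since (T) for a subformula $\alpha$ itself carries the needed information about $\neg\alpha$ and ${\ast}\alpha$, with Proposition~\ref{MaxJ4} filling the remaining gaps exactly as you indicate.
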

\begin{proof}
The proof is done by induction on the complexity of the formula $\gamma$. If $\gamma$ is atomic then (T) holds by hypothesis.
Now, suppose (T) holds for every formula with complexity $\leq n$ (induction hypothesis -- IH) and let $\gamma$ be a formula with complexity $n$.
In order to prove (T) from (IH) by analyzing all the possible cases (namely, $\gamma = \neg\alpha$ or $\gamma = {\ast}\alpha$ or  $\gamma = \alpha \vee \beta$), each item of Proposition~\ref{MaxJ4} should be used.\footnote{Observe that it is enough to prove the `only if' part of (T), since the four conditions on the right-hand side are pairwise incompatible, and $e(\gamma)$ can only take one of the values $0, 1, 1/3, 2/3$. Thus, if, for instance, the first condition on the right-hand side of (T) holds  for a  given formula $\gamma$ then the other 3 conditions are false and so $e_\Lambda(\gamma) \not\in \{1/3,1,0\}$, by the `only if' part of (T). Hence,  $e_\Lambda(\gamma)$ must be $2/3$. This shows that the `if' part of (T) follows from the `only if' part.} The details are left to the reader.
\end{proof}

\begin{theorem} [Completeness of ${\sf H}_4$] \label{complJ4} The calculus ${\sf H}_4$ is complete w.r.t. ${\sf J}_4$, that is: $\Gamma \vDash_{{\sf J}_4} \varphi$ implies that $\Gamma \vdash_{{\sf H}_4} \varphi$, for every finite set of formulas $\Gamma \cup \{\varphi\}$.
\end{theorem}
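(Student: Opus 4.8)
The plan is to prove the statement by contraposition, reusing the two ingredients just established: Proposition~\ref{MaxJ4} on maximal sets and the Truth Lemma (Lemma~\ref{TL-J4}). So I assume $\Gamma \nvdash_{{\sf H}_4} \varphi$ and aim to build a single evaluation of ${\sf J}_4$ that designates every member of $\Gamma$ while leaving $\varphi$ undesignated; this directly witnesses $\Gamma \nvDash_{{\sf J}_4} \varphi$, and the contrapositive gives completeness.

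First I would use that ${\sf H}_4$ is a finitary Tarskian logic, its only inference rule being (MP). Hence the Lindenbaum--{\L}os theorem applies and the set $\Gamma$, which by hypothesis does not prove $\varphi$, extends to a set $\Lambda \supseteq \Gamma$ that is maximal \ntwrt\varphi\ in ${\sf H}_4$, so that $\Lambda \nvdash_{{\sf H}_4} \varphi$ while every proper extension of $\Lambda$ does prove $\varphi$. By Proposition~\ref{MaxJ4}, $\Lambda$ is deductively closed and satisfies the membership conditions (1)--(12). I then define $e_\Lambda$ on the propositional variables exactly as in clause (T) of Lemma~\ref{TL-J4}, extend it homomorphically to an evaluation of ${\sf J}_4$, and invoke the Truth Lemma to conclude that (T) holds for every formula $\gamma$, atomic or complex.

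The crux of the wrap-up is the equivalence read off from (T),
$$
e_\Lambda(\gamma) \in F_{1/3} \quad\text{iff}\quad \gamma \in \Lambda,
$$
which holds because the unique clause of (T) assigning the undesignated value $0$ is the one requiring $\gamma \notin \Lambda$, whereas the three clauses yielding $1$, $2/3$, $1/3$ all demand $\gamma \in \Lambda$. Consequently, every $\psi \in \Gamma \subseteq \Lambda$ gets $e_\Lambda(\psi) \in F_{1/3}$, so $e_\Lambda$ designates all the premises; and since $\Lambda$ is closed with $\Lambda \nvdash_{{\sf H}_4} \varphi$ we have $\varphi \notin \Lambda$, whence $e_\Lambda(\varphi) = 0 \notin F_{1/3}$. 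This $e_\Lambda$ is precisely a countermodel to $\Gamma \vDash_{{\sf J}_4} \varphi$, completing the argument.

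I expect the genuinely delicate work to lie not in this theorem but in the two results it consumes. Proposition~\ref{MaxJ4} must distill the twelve membership conditions from the axioms (Ax1)--(Ax12), leaning on the deduction--detachment theorem for $\Rightarrow$; and the inductive Truth Lemma must reproduce the three-way split among the designated values $2/3$ and $1/3$ through the $\ast$-clauses (10)--(12), which is where the characteristic behaviour of the square operator is pinned down. Once those are in hand, the present proof is the short and essentially routine closing step sketched above, the only point requiring care being the verification that the four right-hand conditions of (T) genuinely partition membership, so that \emph{designated} coincides with \emph{belonging to} $\Lambda$.
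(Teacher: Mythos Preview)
Your proposal is correct and follows essentially the same approach as the paper's own proof: contraposition via Lindenbaum--{\L}os to obtain a maximal non-trivial set $\Lambda \supseteq \Gamma$, followed by an appeal to the Truth Lemma to extract the equivalence $e_\Lambda(\gamma) \in F_{1/3}$ iff $\gamma \in \Lambda$, yielding the desired countermodel. Your additional commentary on where the real work lies (Proposition~\ref{MaxJ4} and Lemma~\ref{TL-J4}) is accurate and matches the paper's structure.
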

\begin{proof}
Let $\Gamma\cup \{\varphi\}$ be a set of formulas in the language of ${\sf J}_4$
such that $\Gamma\nvdash_{{\sf H}_4} \varphi$.
By Lindenbaum-\L os, there exists a set $\Lambda$  maximal \ntwrt\varphi\ in ${\sf H}_4$
such that $\Gamma \subseteq \Lambda$.
Let $e_\Lambda$ be the evaluation defined as in the Truth Lemma~\ref{TL-J4}.
Then, it follows that $e_\Lambda(\gamma) \in F_{1/3}$ iff $\gamma \in \Lambda$, for every formula $\gamma$. Therefore $e_\Lambda$ is an evaluation such that $e_\Lambda[\Gamma] \subseteq F_{1/3}$ but $e_\Lambda(\varphi) = 0$ since $\varphi \not\in \Lambda$, hence $\Gamma \not\vDash_{{\sf J}_4} \varphi$.
\end{proof}

Recall that, from Theorem \ref{final-simple} and Remark \ref{rem-simple}, the Hilbert calculus $\overline{\sf H}_4$ obtained from  ${\sf H}_4$ by adding the explosion rule
$$(exp_1) \ \displaystyle \frac{\varphi \land \neg\varphi}{\bot} $$
(see Definition~\ref{Hbar}) is the axiomatization of the (only) proper extension of  ${\sf H}_4$ which is strongly maximal w.r.t. {\sf CPL}, and that is semantically characterized by the matrix logic
$$\bar{J}_4 =\langle {\bf A}_4 \times {\bf A}_{2}, F_{1/3}\times\{1\}  \rangle, $$
where ${\bf A}_2$ is the Boolean algebra over $\{0,1\}$  in the signature $\Sigma$, where the operator $\ast$ is defined  as ${\ast}x=x$.

\section{Conclusions} \label{concl}

In this paper we have been concerned with the study of maximality and strong maximality conditions among finite-valued {\L}ukasiewicz logics ${\sf L}^i_n$ with order filters as designated values. In particular, we have characterized the conditions under which a logic ${\sf L}^i_n$ is maximal w.r.t.\ {\sf CPL} and its unique extension  $\bar{{\sf L}}^i_n$ by an inference rule is strongly maximal w.r.t.\ classical logic. This allows us to show that, although they are not strongly maximal w.r.t.\ {\sf CPL},  the logics ${\sf L}^i_n$ with $n$ prime and $i/n \leq 1/2$ are in fact ideal paraconsistent logics.
{Thus, they provide interesting and well-motivated examples of  ideal  paraconsistent logical systems which are $(n+1)$-valued, in contrast with the $(n+2)$-valued  logics   $\mathcal{M}_{n+2}$ presented in~\cite{ArieliAZ11a} and reproduced here in Example~\ref{EjIdeal}, whose definition is somewhat {\em ad hoc}.}

As for future work, there are several interesting problems that we leave open in this paper.
{
Concerning maximality, a natural question is how to obtain a stronger version of Theorem~\ref{maxthm} which give us sufficient conditions to guarantee that a given matrix logic $L_1$ is {\em strongly maximal} w.r.t. another matrix logic $L_2$.}
On the other hand, notice that the study of strong maximality developed in Section~\ref{Joan} was heavily based on
results on the algebraic semantics associated to these systems by means of the Blok and Pigozzi's techniques. Thus, another interesting issue to be explored in future work is to obtain more examples of strong maximality for different families of algebraizable logics

Another question raised here is the axiomatization of the ideal paraconsistent logics ${\sf J}_{q+1}$ for $q > 3$ in a ``natural'' signature containing a deductive implication. As it was shown in Subsection~\ref{sectJ4}, the signature $\Sigma = \{ \lor, \neg, *\}$ is suitable for the case $q = 3$. Moreover, besides being apt for axiomatizing ${\sf J}_4 = {\sf L}^1_3$, it can be proved that the (non-paraconsistent) logic ${\sf L}^2_3$ can also be axiomatized over $\Sigma$ in a relatively simple way. Note that $\alpha \Rightarrow \beta = \neg\alpha \vee \beta$ defines a deductive implication in ${\sf L}^2_3$.

The fact that  {\L}ukasiewicz implication is definable in $\Sigma$ justifies the convenience of using that signature for dealing with the case $q=3$. However,  this property does not hold for any prime $q> 3$. Indeed, there are primes $q$ in which {\L}ukasiewicz implication of \L$_{q+1}$ cannot be defined over  $\Sigma$, e.g. $q = 17$. The study of the fragments of ${\sf L}^i_q$ in the signature $\Sigma$ is thus a different but closely related problem, which deserves future research.

\subsection*{Acknowledgements} The authors acknowledge partial support by the H2020 MSCA-RISE-2015 project SYSMICS. Coniglio was also financially supported by an individual research grant from CNPq, Brazil (308524/2014-4). Esteva and Godo also acknowledge partial support by the Spanish MINECO/FEDER project RASO (TIN2015- 71799-C2-1-P). Gispert also acknowledges partial support by the Spanish \linebreak MINECO/FEDER projects (MTM2016-74892 and MDM-2014-044) and grant 2017-SGR-95 of Generalitat de Catalunya.

\bibliographystyle{plain}

\begin{thebibliography}{10}

\bibitem{AdDz}
Michael~E. Adams and Wiesjlaw Dziobiak.
\newblock Q-universal quasivarieties of algebras.
\newblock {\em Proc. of the American Mathematical Society}, 120:1053--1059,
  1994.

\bibitem{ArieliAZ10}
Ofer Arieli, Arnon Avron, and Anna Zamansky.
\newblock Maximally paraconsistent three-valued logics.
\newblock In Fangzhen Lin, Ulrike Sattler, and Miroslaw Truszczynski, editors,
  {\em Principles of Knowledge Representation and Reasoning: Proceedings of the
  Twelfth International Conference, {KR} 2010, Toronto, Ontario, Canada, May
  9-13, 2010}. {AAAI} Press, 2010.

\bibitem{ArieliAZ11a}
Ofer Arieli, Arnon Avron, and Anna Zamansky.
\newblock Ideal paraconsistent logics.
\newblock {\em Studia Logica}, 99(1-3):31--60, 2011.

\bibitem{AvronAZ10}
Arnon Avron, Ofer Arieli, and Anna Zamansky.
\newblock On strong maximality of paraconsistent finite-valued logics.
\newblock In {\em Proceedings of the 25th Annual {IEEE} Symposium on Logic in
  Computer Science, {LICS} 2010, 11-14 July 2010, Edinburgh, United Kingdom},
  pages 304--313. {IEEE} Computer Society, 2010.

\bibitem{Be76}
Nuel~D. Belnap.
\newblock How a computer should think.
\newblock In G.~Ryle, editor, {\em Contemporary Aspects of Philosophy}, pages
  30--56. Oriel Press, Boston, 1976.

\bibitem{Be77}
Nuel~D. {Belnap}.
\newblock A useful four-valued logic.
\newblock In J.~M. Dunn and G.~Epstein, editors, {\em Modern uses of
  multiple-valued logic ({F}ifth {I}nternat. {S}ympos., {I}ndiana {U}niv.,
  {B}loomington, {I}nd., 1975)}, pages 5--37. Episteme, Vol. 2. Reidel,
  Dordrecht, 1977.

\bibitem{B_C_V}
Javier Blanco, Miguel Campercholi, and Diego Vaggione.
\newblock The subquasivariety lattice of a discriminator variety.
\newblock {\em Adv. Math.}, 159:18--50, 2001.

\bibitem{blok:pig:01}
Willem~J. Blok and Don Pigozzi.
\newblock Abstract algebraic logic and the deduction theorem.
\newblock Preprint. Available at\\ {\tt\small
  http://www.math.iastate.edu/dpigozzi/papers/aaldedth.pdf}, 2001.

\bibitem{Blok-Pigozzi:DeductionTheorems}
Willem~J. Blok and Don~L. Pigozzi.
\newblock Local deduction theorems in algebraic logic.
\newblock In {\em Algebraic logic ({B}udapest, 1988)}, volume~54 of {\em
  Colloq. Math. Soc. J{\'{a}}nos Bolyai}, pages 75--109. North-Holland,
  Amsterdam, 1991.

\bibitem{CarCon16}
Walter Carnielli and Marcelo~E. Coniglio.
\newblock {\em Paraconsistent Logic: Consistency, Contradiction and Negation},
  volume~40 of {\em Logic, Epistemology, and the Unity of Science}.
\newblock Springer International Publishing, 2016.

\bibitem{car:con:mar:07}
Walter~A. Carnielli, Marcelo~E. Coniglio, and J{o\~{a}}o Marcos.
\newblock Logics of {F}ormal {I}nconsistency.
\newblock In Dov Gabbay and Franz Guenthner, editors, {\em Handbook of
  Philosophical Logic (2nd. edition)}, volume~14, pages 1--93. Springer, 2007.

\bibitem{CM} W. A. Carnielli and J. Marcos. A taxonomy of C-systems. In: W. A. Carnielli, M. E. Coniglio, and I. M. L. D'Ottaviano, editors, {\em Paraconsistency: The Logical Way to the Inconsistent}, volume 228 of {\em Lecture Notes in Pure and Applied Mathematics}, pages 1--94. Marcel Dekker, 2002.

\bibitem{CarMarAmo}
Walter~A. Carnielli, J{o\~{a}}o Marcos, and Sandra de~Amo.
\newblock Formal inconsistency and evolutionary databases.
\newblock {\em Logic and Logical Philosophy}, 8(2):115--152, 2000.

\bibitem{CiMuOt99}
Roberto Cignoli, Itala M.~L. D'Ottaviano, and Daniele Mundici.
\newblock {\em Algebraic foundations of many-valued reasoning}, volume~7 of
  {\em Trends in Logic---Studia Logica Library}.
\newblock Kluwer Academic Publishers, Dordrecht, 2000.

\bibitem{Con:Est:God}
Marcelo~E. Coniglio, Francesc Esteva, and Llu\'{i}s Godo.
\newblock On the set of intermediate logics between the truth and degree
  preserving {{\L}}ukasiewicz logics.
\newblock {\em Logic Journal of the IGPL}, 24(3):288--320, 2016.
\newblock DOI: 10.1093/jigpal/jzw006.

\bibitem{CF2014}
Marcelo~E. Coniglio and Mart\'in Figallo.
\newblock On a four-valued modal logic with deductive implication.
\newblock {\em Bulletin of the Section of Logic}, 43(1-2):1--18, 2014.

\bibitem{Con:Sil}
Marcelo~E. Coniglio and Luiz~H. Silvestrini.
\newblock An alternative approach for quasi-truth.
\newblock {\em Logic Journal of the IGPL}, 22(2):387--410, 2014.
\newblock DOI: 10.1093/ljigpal/jzt026.

\bibitem{dac:63}
Newton C.~A da~Costa.
\newblock Sistemas formais inconsistentes ({I}nconsistent formal systems, in
  {P}ortuguese).
\newblock Habilitation thesis, Universidade Federal do Paran\'a, Curitiba,
  Brazil, 1963.
\newblock Republished by Editora UFPR, Curitiba, Brazil,1993.

\bibitem{DO2015}
Michael De and Hitoshi Omori.
\newblock Classical negation and expansions of {B}elnap-{D}unn logic.
\newblock {\em Studia Logica}, 103(4):825--851, 2015.

\bibitem{dot:dac:70}
Itala~M.~L. D'Ottaviano and Newton~C.~A. da~Costa.
\newblock Sur un probl\`{e}me de {J}a\'{s}kowski (in {F}rench).
\newblock {\em Comptes Rendus de l'Acad\'emie de Sciences de Paris ({A}-{B})},
  270:1349--1353, 1970.

\bibitem{Du76}
J.~Michael Dunn.
\newblock Intuitive semantics for first-degree entailment and `coupled trees'.
\newblock {\em Philosophical Studies}, 29(3):149--168, 1976.

\bibitem{Dz0}
Wiesjlaw Dziobiak.
\newblock On subquasivariety lattices of semi-primal varieties.
\newblock {\em Algebra Universalis}, 20:127--129, 1985.

\bibitem{FR2}
Josep~M. Font and Miquel Rius.
\newblock An abstract algebraic logic approach to tetravalent modal logics.
\newblock {\em The Journal of Symbolic Logic}, 65(2):481--518, 2000.

\bibitem{GpT98}
Joan Gispert and Antoni Torrens.
\newblock Quasivarieties generated by simple mv-algebras.
\newblock {\em Studia Logica}, 61:79--99, 1998.

\bibitem{GT14}
Joan Gispert and Antoni Torrens.
\newblock Locally finite quasivarieties of {MV}-algebras.
\newblock {\em ArXiv}, pages 1--14, 2014.
\newblock Online DOI: http://arxiv.org/abs/1405.7504.

\bibitem{Ha01c}
Petr H\'{a}jek.
\newblock On very true.
\newblock {\em Fuzzy Sets and Systems}, 124(3):329--333, 2001.

\bibitem{Komori:SuperLukasiewiczPropositional}
Yuichi Komori.
\newblock Super-{{\L}}ukasiewicz propositional logics.
\newblock {\em Nagoya Mathematical Journal}, 84:119--133, 1981.

\bibitem{Loureiro}
Isabel Loureiro.
\newblock Principal congruences of tetravalent modal algebras.
\newblock {\em Notre Dame Journal of Formal Logic}, 26:75--80, 1985.

\bibitem{Marcos}
J{o\~{a}}o Marcos.
\newblock 8{K} solutions and semi-solutions to a problem of da {C}osta.
\newblock Unpublished manuscript, 2000.

\bibitem{McNaughton:FunctionalRep}
Robert Mc{N}aughton.
\newblock A theorem about infinite-valued sentential logic.
\newblock {\em Journal of Symbolic Logic}, 16(1):1--13, 1951.

\bibitem{Ml72}
Grigore~C. Moisil.
\newblock {\em Essais sur les logiques non chrysippiennes}.
\newblock \'Editions de l'Acad\'emie de la R\'epublique Socialiste de Roumanie,
  Bucarest, 1972.

\bibitem{Monteiro:69}
Antonio Monteiro.
\newblock Notas del curso \'{A}lgebras de {B}oole {I}nvolutivas.
\newblock Technical report, Instituto de Matem\'atica, Universidad Nacional del
  Sur, Bahia Blanca, 1969.
\newblock Reprinted as \'Algebras de Boole Involutivas, Informe T\'ecnico
  Interno No. 78, Universidad Nacional del Sur, Bahia Blanca, 2002.

\bibitem{LMonteiro}
Luiz Monteiro.
\newblock Axiomes ind\'ependants pour les alg\`ebres de {\l}ukasiewicz
  trivalentes.
\newblock {\em Bulletin de la Societ\'e des Sciences Math\'ematiques et
  Physiques de la R. P. Roumanie, Nouvelle s\'erie}, 7:199--202, 1963.

\bibitem{Panti:Varieties}
Giovanni Panti.
\newblock Varieties of {MV}-algebras.
\newblock {\em Journal of Applied Non-Classical Logics}, 9(1):141--157, 1999.

\bibitem{RibCon12}
M\'arcio~M. Ribeiro and Marcelo~E. Coniglio.
\newblock Contracting logics.
\newblock In L.~Ong and R.~de~Queiroz, editors, {\em Logic, Language,
  Information and Computation. WoLLIC 2012}, volume 7456 of {\em Lecture Notes
  in Computer Science}, pages 268--281. Springer, 2012.

\bibitem{RTV90}
Antonio~J. Rodr\'{\i}guez, Antoni Torrens, and Ventura Verd\'{u}.
\newblock {{\L}}ukasiewicz logic and {W}ajsberg algebras.
\newblock {\em Bulletin of the Section of Logic}, 19:51--55, 1990.

\bibitem{Sette}
Antonio~M. Sette.
\newblock On the propositional calculus $\mathbf{P}^1$.
\newblock {\em Mathematica Japonicae}, 18(13):173--180, 1973.

\bibitem{SetCar}
Antonio~M. Sette and Walter~A. Carnielli.
\newblock Maximal weakly-intuitionistic logics.
\newblock {\em Studia Logica}, 55(1):181--203, 1995.

\bibitem{Woj88}
Ryszard W\'ojcicki.
\newblock {\em Theory of Logical Calculi: Basic Theory of Consequence
  Operations}, volume 199 of {\em Synthese Library}.
\newblock Springer Netherlans, 1988.

\end{thebibliography}

\end{document}